\newtheorem{theo}{Theorem}[section] 
\newtheorem{prop}[theo]{Proposition}
\newtheorem{lem}[theo]{Lemma} 
\newtheorem{cor}[theo]{Corollary}
\newtheorem{rem}[theo]{Remark}
\newtheorem{remark}[theo]{Remark}
\theoremstyle{definition}
\newtheorem{defi}[theo]{Definition}
\numberwithin{equation}{section} 
\def\tsp{{\,}^t\!}
\newcommand{\C}{\mathbb C} 
\newcommand{\R}{\mathbb R} 
\newcommand{\N}{\mathbb N} 
\newcommand{\B}{\mathbb B}
\begin{document}

\begin{abstract}
We construct a family of small analytic discs attached to Levi non-degenerate hypersurfaces in $\C^{n+1}$, which is globally biholomorphically invariant. We then apply this technique 
to study unique determination problems along Levi non-degenerate hypersurfaces that are merely of class $\mathcal{C}^4$. This method gives 2-jet determination results for germs of biholomorphisms, CR diffeomorphisms, as well as in the almost complex setting. 
\end{abstract} 

\title{Stationary holomorphic discs and finite jet determination problems} 

\author{Florian Bertrand and L\'ea Blanc-Centi}

\subjclass[2000]{}
\keywords{}
\thanks{Research of the first author was supported by Austrian Science Fund FWF grants AY0037721 and M1461-N25.}
\maketitle

\section*{Introduction}

For many geometric structures, the automorphisms depend only on a finite number of parameters.
For example, an isometry $\varphi:M\to M$ of a connected Riemannian manifold $M$ is uniquely determined by its value $\varphi(p)$ and its differential $d\varphi_p$ at any given point $p\in M$, that is, by its 1-jet $j_p^1\varphi$. In complex analysis, biholomorphic automorphisms of a bounded domain in $\C^n$ are uniquely determined by their 1-jet at any given point inside the domain \cite{Ca}. 
What about the analogous statement if the point is taken on the boundary of the domain? It is a well-known fact that biholomorphic automorphisms of the unit ball $\B^n\subset\C^n$ extend holomorphically through the boundary and are uniquely determined by their 2-jets, but not by their 1-jets, at any given boundary point.

This problem is related with the local equivalence problem for boundaries of domains, or more generally for real hypersurfaces, which was started on in complex dimension 2 by H. Poincar\'e. 
Two real hypersurfaces $\Gamma$ and $\Gamma'$ are said to be {\it locally equivalent} at $p\in \Gamma$ and $p'\in\Gamma'$, respectively, if there exist a neighborhood $V$ of $p$ and $V'$ of $p'$ and a biholomorphic mapping $F:V\to V'$ so that $F(V\cap \Gamma)=V'\cap \Gamma'$. Local equivalence is a very restrictive condition, since it leads to an overdeterminated inhomogeneous system of partial differential equations for the biholomorphic mapping. If such an $F$ exists, then how unique is it? To answer this question, one has to look for sufficiently many invariants attached to the hypersurfaces. 

\bigskip

H. Poincar\'e initiated a study of invariants of a real hypersurface by looking at relations between the Taylor series coefficients of a defining function and the Taylor series of a transformed equation, in terms of the coefficients of the Taylor expansion of a local biholomorphic change of variable. This process of finding invariants from the power series expansion point of view was carried out much later in a significant manner by J.K. Moser
for Levi non-degenerate hypersurfaces. The following local version of H. Cartan's uniqueness theorem follows from the classical results of \cite{ECa,Ta1,Ta2,CM}: 

\begin{theo}\label{thm0}
Let $\Gamma$ be a real-analytic hypersurface through a point $p$ in $\C^n$ with non-degenerate Levi form and let $F_j:U_j\to V_j$, $j=1,2$, be two biholomorphic maps, where $U_j,V_j$ are open subsets in $\C^n$, $p\in U_j$ and $F_j(\Gamma\cap U_j)\subset \Gamma$ for any $j$. Then, if $F_1$and $F_2$ have the same 
2-jet at $p$, they coincide in a neighborhood of $p$.
\end{theo}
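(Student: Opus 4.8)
\emph{Strategy.} The plan is to reduce the claim to a rigidity property of the stability group of $\Gamma$ at $p$, and then invoke the Chern--Moser normal form.

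\emph{Step 1: reduction to a single germ.} Since $F_1$ and $F_2$ have the same $1$-jet at $p$, they agree there; write $p':=F_1(p)=F_2(p)$, and observe that $p'\in\Gamma$ because $p\in\Gamma$ and $F_j(\Gamma\cap U_j)\subset\Gamma$. As $F_2$ is biholomorphic, $F_2(\Gamma\cap U_2)$ is a real hypersurface contained in $\Gamma$, hence a relatively open subset of $\Gamma$ containing $p'$; after shrinking $U_2,V_2$ we may assume $F_2(\Gamma\cap U_2)=\Gamma\cap V_2$. Then $G:=F_2^{-1}\circ F_1$ is a well-defined germ at $p$ of biholomorphism with $G(p)=p$ and $G(\Gamma)=\Gamma$ near $p$. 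By the chain rule for $2$-jets, $j_p^2 G=j_p^2(F_2^{-1}\circ F_1)$ depends only on $j_p^2 F_1$ and $j_{p'}^2 F_2^{-1}$; replacing $j_p^2 F_1$ by the equal jet $j_p^2 F_2$ yields $j_p^2 G=j_p^2(F_2^{-1}\circ F_2)=j_p^2\,\mathrm{id}$. It thus suffices to prove that a germ at $p$ of biholomorphism $G$ with $G(p)=p$, $G(\Gamma)=\Gamma$ near $p$, and $j_p^2 G=j_p^2\,\mathrm{id}$, is the identity near $p$.

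\emph{Step 2: normalization and rigidity of the stability group.} Assume $p=0$. Since $\Gamma$ is real-analytic and Levi non-degenerate, there is a germ at $0$ of biholomorphism $\Phi$ with $\Phi(0)=0$ taking $\Gamma$ to a hypersurface $\Gamma_0:=\Phi(\Gamma)$ in Chern--Moser normal form $\mathrm{Im}\,w=\langle z,\bar z\rangle+\sum_{k,l\ge2}N_{k,l}(z,\bar z,\mathrm{Re}\,w)$, with the usual trace conditions on $N_{2,2},N_{2,3},N_{3,3}$. Conjugating, $\widetilde G:=\Phi\circ G\circ\Phi^{-1}$ is a germ at $0$ of biholomorphism fixing $0$ with $\widetilde G(\Gamma_0)=\Gamma_0$ near $0$, and again by the $2$-jet chain rule $j_0^2\widetilde G=j_0^2(\Phi\circ\Phi^{-1})=j_0^2\,\mathrm{id}$. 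Hence $\widetilde G$ lies in the stability group of $\Gamma_0$ at $0$, and by the classical results of E.~Cartan, Tanaka and Chern--Moser the map $\widetilde G\mapsto j_0^2\widetilde G$ embeds this group into the finite-dimensional isotropy group at $0$ of the model quadric $Q=\{\mathrm{Im}\,w=\langle z,\bar z\rangle\}$: a biholomorphism preserving a normal form is determined by the finitely many parameters occurring in its part of weight $\le2$ (the Levi-form-preserving linear part, the dilation factor, the coefficient of $w$ in the $z$-component, and the real parameter entering at weight $3$ in the $w$-component), all of which are encoded in the $1$- and $2$-jets at $0$. As $j_0^2\widetilde G=j_0^2\,\mathrm{id}$, these parameters take their trivial values, so $\widetilde G=\mathrm{id}$ near $0$; therefore $G=\mathrm{id}$ near $0$ and $F_1=F_2$ near $p$.

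\emph{Main obstacle.} The one genuinely non-formal ingredient is the rigidity in Step 2: the theorem of Chern--Moser (and Tanaka) that a germ in the stability group of a Levi non-degenerate real-analytic hypersurface is determined by its $2$-jet, equivalently that a normal-form-preserving transformation is pinned down by its data of weight $\le2$. Granting this, everything else is bookkeeping with jet composition, the only other delicate point being that $G=F_2^{-1}\circ F_1$ genuinely maps $\Gamma$ into $\Gamma$, which is why one first arranges $F_2(\Gamma\cap U_2)=\Gamma\cap V_2$ by shrinking neighborhoods. Note, finally, that this argument relies essentially on real-analyticity, through the existence and convergence of the Chern--Moser normal form; removing that hypothesis for hypersurfaces merely of class $\mathcal C^4$ is precisely what the analytic-disc technique of the present paper is designed to achieve.
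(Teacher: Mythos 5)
Your argument is correct and matches the paper's intended derivation: the paper states Theorem~\ref{thm0} without proof as a consequence of the classical results of E.~Cartan, Tanaka and Chern--Moser, and your reduction to a single germ $G=F_2^{-1}\circ F_1$ with $j_p^2G=j_p^2\,\mathrm{id}$ followed by the Chern--Moser normal form rigidity is exactly that standard derivation. (The paper's own new contribution is a different, disc-theoretic proof of the more general Theorem~\ref{thmjet}, not of Theorem~\ref{thm0} itself.)
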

\noindent Theorem \ref{thm0} becomes false without any hypothesis on the Levi form, as one can see by considering the hyperplane $\Im m z_0=0$ in $\C^{n+1}$, whose automorphism group at 0 is infinite dimensional: every $F(z_0,z_\alpha)=(z_0,z_\alpha+f(z_0))$, for any holomorphic mapping $f$, is a biholomorphism preserving the hyperplane and fixing the origin. Note that in $\C^2$, Levi-flat hypersurfaces are the only ones (among real-analytic hypersurfaces) for which local biholomorphisms are not uniquely determined by their jets of any order \cite{ELZ}. 

Finite jet determination  of holomorphic mappings sending one real submanifold to another has attracted much attention in recent years, as related in the survey articles \cite{BER6} and \cite{Za}. After Theorem \ref{thm0}, other situations were investigated: 
finitely non-degenerate hypersurfaces \cite{BER1,Z}; hypersurfaces of finite type in $\C^2$ \cite{ELZ,KM}; 
 CR analogues \cite{Han1,Han2,BER2,BER5}.
 Note that it is always either assumed that $\Gamma$ and $\Gamma'$ are real-analytic, 
 or the conclusion is that the formal power series of the mapping $F$ is determined by a finite jet. The only finite jet determination result which applies to merely smooth hypersurfaces and smooth mappings is due to \cite{E} for smooth $k_0$-non-degenerate hypersurfaces: in the particular case $k_0=1$ (corresponding to Levi non-degenerate hypersurfaces), it states that
\textit{any holomorphic mapping which is defined locally on one side of a smooth, Levi non-degenerate real hypersurface $\Gamma\subset\C^n$ and extends smoothly to $\Gamma$, sending $\Gamma$ diffeomorphically into another smooth real hypersurface $\Gamma'\subset\C^n$, is completely determined by its 2-jet at a point $p\in \Gamma$}.

\bigskip

The aim of the present paper is to enter the finite jet determination problem with a more geometrical approach. The seminal approach of \cite{CM} is based on finding analytic invariants for real-analytic hypersurfaces (``normal form''). Here we propose to make use of some well-known invariants for a (sufficiently smooth) real hypersurface, namely the stationary holomorphic discs, that is, discs glued to the hypersurface and satisfying some differential condition at the boundary. These particular holomorphic discs were first introduced by L. Lempert \cite{L1}, as the complex geodesics for the Kobayashi metric for a strongly convex domain. They are involved in the construction of a ``Riemann map'' for various domains \cite{L1,ST} (see also \cite{CGS} for an almost complex version): roughly, the parametrization of this special family of holomorphic discs $h$ by $h\mapsto(h(0),h'(0))$ leads to a similar description for biholomorphic mappings. 

The point is to find an analogue at the boundary. A basic observation is that stationary holomorphic discs glued to the unit sphere are parametrized by their 1-jet at 0, but also by their 2-jet at any boundary point. Based on the fact that this is still true if one replaces the sphere by any non-degenerate hyperquadric,  we take this as our model case. 
The general situation of real hypersurfaces will be reduced to a perturbation of this model case. 
With this new method, we obtain an improvement of Theorem \ref{thm0} regarding the smoothness, since we do not need real analyticity, nor $\mathcal{C}^\infty$-smoothness. More precisely, we get:

\begin{theo}\label{thmjet}
Let $\Gamma$, $\Gamma'$ be two real hypersurfaces of class
$\mathcal{C}^4$ in
$\C^{n+1}$, and $p\in\Gamma$. Assume $\Gamma$ is Levi non-degenerate at
$p$. Then the germs at $p$ of biholomorphisms $F$ such that $F(\Gamma)=\Gamma'$ are uniquely determined by their 2-jet at $p$.
\end{theo}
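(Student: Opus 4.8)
The plan is to use the biholomorphically invariant family of stationary holomorphic discs attached to $\Gamma$, constructed in the previous sections, as a rigid system of test curves, together with the fact that such a disc is determined by its $2$-jet at the point of $\Gamma$ to which it is glued. First I would normalize: composing on either side with germs of biholomorphisms — which modify a $2$-jet by an explicit substitution and so do not affect the statement — I may take $p=0$ and assume $\Gamma,\Gamma'$ are $\mathcal{C}^4$ perturbations of one and the same non-degenerate hyperquadric $Q\subset\C^{n+1}$ (a biholomorphism with $F(\Gamma)=\Gamma'$ can exist only if the Levi forms of $\Gamma$ at $0$ and of $\Gamma'$ at $F(0)$ are equivalent, so this is harmless). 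I then record the properties of the disc family that will be used. Writing $\mathcal{S}(\Gamma)$ for the set of small stationary discs $h\colon\overline{\D}\to\C^{n+1}$ glued to $\Gamma$ with $h(1)=0$, which are of class $\mathcal{C}^2$ up to $\partial\D$: \emph{(i) invariance} --- any germ $G$ of biholomorphism at $0$ with $G(\Gamma)=\Gamma'$ maps $\mathcal{S}(\Gamma)$ into $\mathcal{S}(\Gamma')$, since if $\zeta\mapsto\zeta\,c(\zeta)\,\partial\rho(h(\zeta))$ extends holomorphically to $\D$ with $c>0$ on $\partial\D$ then so does $\zeta\mapsto\zeta\,c(\zeta)\,\partial(\rho\circ G^{-1})(G(h(\zeta)))$, being the former times the holomorphic invertible matrix $\left(dG_{h(\zeta)}\right)^{-1}$; \emph{(ii) parametrization} --- the map $h\mapsto\bigl(h(1),h'(1),h''(1)\bigr)$ is injective on $\mathcal{S}(\Gamma)$, and likewise for $\mathcal{S}(\Gamma')$; \emph{(iii) covering} --- the discs in $\mathcal{S}(\Gamma)$ fill, on one side, a neighborhood of $0$ (through the companion parametrization by the $1$-jet $(h(0),h'(0))$ at $0$, the centers $h(0)$ already sweep a one-sided neighborhood). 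For the model $Q$ these are explicit, as recalled in the Introduction, and they persist under the $\mathcal{C}^4$ perturbation.

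Granting this, the deduction is short. Let $F_1,F_2$ be germs at $p=0$ of biholomorphisms with $F_j(\Gamma)=\Gamma'$ and the same $2$-jet at $p$, and fix $h\in\mathcal{S}(\Gamma)$ small enough to lie inside the common domain of $F_1$ and $F_2$. By (i), $F_1\circ h$ and $F_2\circ h$ belong to $\mathcal{S}(\Gamma')$. Because $h(1)=p$, the chain rule writes $\bigl((F_j\circ h)(1),(F_j\circ h)'(1),(F_j\circ h)''(1)\bigr)$ as a fixed polynomial in the $2$-jet of $F_j$ at $p$ and in $\bigl(h(1),h'(1),h''(1)\bigr)$; as the first input agrees for $j=1,2$, these two $2$-jets at $1$ coincide, so (ii) for $\Gamma'$ gives $F_1\circ h=F_2\circ h$ on $\overline{\D}$. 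Letting $h$ vary over $\mathcal{S}(\Gamma)$, we conclude that $F_1$ and $F_2$ agree on the union of the $h(\overline{\D})$, which by (iii) is a set with non-empty interior inside any connected neighborhood of $p$ on which both maps are holomorphic; the identity theorem then yields $F_1=F_2$ as germs at $p$.

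I expect the genuine difficulty to lie not in this deduction but in the construction supporting (i)--(iii) when $\Gamma$ is only $\mathcal{C}^4$. The hyperquadric case is explicit, so the point is the perturbation: one recasts attachment plus stationarity as a nonlinear, parameter-dependent Bishop-type equation and solves it by an implicit function theorem, carefully tracking the loss of regularity --- a $\mathcal{C}^4$ hypersurface produces discs of class only $\mathcal{C}^{2,\alpha}$ ($\alpha<1$), which is exactly enough for $h''(1)$ to make sense --- and one must check that at the model disc the linearization of the $2$-jet map, restricted to the solution family, is invertible, so that the injectivity in (ii) and the covering in (iii) survive the perturbation. This is precisely what the earlier sections establish.
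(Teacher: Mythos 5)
Your overall strategy is the paper's: use the invariance of the family of stationary discs, the injectivity of a boundary-jet map on that family, and the fact that the centers sweep an open set, then finish with the identity theorem. The final deduction you give is essentially the paper's closing argument. But there is a genuine gap in the claim that properties (i)--(iii) ``persist under the $\mathcal{C}^4$ perturbation'' for the germ $\Gamma$ itself, and that ``this is precisely what the earlier sections establish.'' What the perturbation theory (Theorem \ref{paramperturb}, via Globevnik's theorem) actually gives is a statement for hypersurfaces $\Gamma^\rho$ with $\|\rho-r\|_{\mathcal{C}^4(\B)}<\varepsilon$ on a \emph{fixed} ball $\B$ large enough to contain a whole model disc $\bm{h}(\overline{\Delta})$, with $\varepsilon$ and $\delta$ depending on that disc. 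A germ of a $\mathcal{C}^4$ hypersurface in normal form at $0$ is \emph{not} such a perturbation: the error terms are only small near $0$, not on a fixed ball containing a non-degenerate model disc. The missing ingredient is the scaling method: one must introduce the anisotropic dilations $\Lambda_t(z_0,z_\alpha)=(t^2z_0,tz_\alpha)$, show that $\rho_t=\frac{1}{t^2}\rho\circ\Lambda_t$ satisfies $\|\rho_t-r\|_{\mathcal{C}^4(\B)}\le Ct$, and run the whole argument for $F_t=\Lambda_t^{-1}\circ F\circ\Lambda_t$ with $t$ small. This is the substantive content of Section 4, and your proposal does not supply a substitute for it.

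A second, related gap: your injectivity statement (ii) is asserted globally on $\mathcal{S}(\Gamma')$, whereas the perturbation theorem only yields injectivity of the boundary-jet map on the set $\mathscr{S}^*_{\bm{h},\delta}(\Gamma^\rho)$ of discs within $\delta$ of a fixed model disc in the $\mathcal{C}^{1,\epsilon}$ topology. To apply it to $F\circ h$ and $h$ simultaneously you must know that the image disc stays in that same $\delta$-neighborhood; this is exactly the paper's Lemma \ref{cst-K}, which shows $\|{F_t}_*\bm{f}-\bm{f}\|_{\mathcal{C}^{1,\epsilon}(\partial\Delta)}\le tK\|\bm{f}\|^3_{\mathcal{C}^{1,\epsilon}(\partial\Delta)}$ and again relies on the dilation parameter $t$. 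Finally, a smaller discrepancy of route: the paper's injectivity for perturbed hypersurfaces is for the map $\bm{f}\mapsto\bm{f}'(1)$ on the \emph{lifted} discs (the $1$-jet at $1$ of the lift to the conormal bundle, which transforms by the $1$-jet of $(F,\tsp dF^{-1})$ and hence is controlled by the $2$-jet of $F$); the statement you invoke, that $h$ is determined by $(h(1),h'(1),h''(1))$ without the lift, is only verified in the paper for the model hyperquadric (as a remark) and would need its own perturbed version. Your reduction step and the identity-theorem endgame are fine; the argument as written, however, rests on a version of (ii)--(iii) that the stated hypotheses do not yet deliver without the dilation and displacement estimates.
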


\bigskip

In fact, the proof of Theorem \ref{thmjet} only uses the fact that stationary discs glued to $\Gamma$ are taken by $F$ to stationary discs glued to $\Gamma'$. 
 If the hypersurfaces are strictly pseudoconvex, the real hypothesis is thus that the holomorphic map $(F,dF^{-1})$ is defined locally on one side of $\Gamma$ and extends 
 $\mathcal{C}^1$-smoothly to $\Gamma$ (see remark \ref{cote}). Since a germ of a CR diffeomorphism between two strictly pseudoconvex hypersurfaces   
 admits a local one-sided extension, we obtain the following result:   
\begin{theo}\label{thmcr}
Let $\Gamma,\ \Gamma'\subset \C^{n+1}$ be two strictly pseudoconvex hypersurfaces of class $\mathcal{C}^4$, and let 
$p\in\Gamma$. Then the germs at $p$ of CR diffeomorphisms of class $\mathcal{C}^3$ between $\Gamma$ and $\Gamma'$ are uniquely determined by their 
2-jet at $p$.
\end{theo}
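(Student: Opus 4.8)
The plan is to reduce Theorem~\ref{thmcr} to the one-sided form of Theorem~\ref{thmjet} noted in the discussion above --- a holomorphic map defined on one side of a strictly pseudoconvex $\Gamma$, which is $\mathcal{C}^1$ up to $\Gamma$ together with the inverse of its differential, carries stationary discs of $\Gamma$ to stationary discs of $\Gamma'$, and this already forces $2$-jet determination --- the only additional ingredient being the classical one-sided holomorphic extension of CR maps. So let $F$ be a germ at $p$ of a $\mathcal{C}^3$ CR diffeomorphism with $F(\Gamma)=\Gamma'$, put $p'=F(p)$, and shrink neighborhoods so that $\Gamma$ bounds locally a strictly pseudoconvex domain $\Omega$ near $p$. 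Each component of $F$ is a $\mathcal{C}^3$ CR function on $\Gamma$, hence extends holomorphically to $\Omega$; since $\Gamma$ is of class $\mathcal{C}^4$, the standard boundary regularity for this one-sided extension from a strictly pseudoconvex hypersurface provides an extension $\widetilde F$ which is of class $\mathcal{C}^2$ on $\overline\Omega$ near $p$ (in fact as smooth as $F$ allows).

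Next I would check that $\widetilde F$ is a local biholomorphism which is $\mathcal{C}^2$ up to $\Gamma$. Its differential $d\widetilde F_p$ is $\C$-linear, it maps the complex tangent space $T^{\C}_p\Gamma$ isomorphically onto $T^{\C}_{p'}\Gamma'$ and a direction transverse to $\Gamma$ to one transverse to $\Gamma'$, since $F$ is a CR diffeomorphism; hence $d\widetilde F_p$ is invertible. Applying the $\mathcal{C}^2$ inverse function theorem to a $\mathcal{C}^2$ extension of $\widetilde F$ to a full neighborhood of $p$, one obtains, after shrinking, that $\widetilde F$ is injective on $\overline\Omega$ near $p$ with image a one-sided neighborhood of $p'$, and that $\widetilde F^{-1}$ is holomorphic on $\widetilde F(\Omega)$ and of class $\mathcal{C}^2$ up to $\Gamma'$ (in fact $\widetilde F(\Omega)$ is then the pseudoconvex side of $\Gamma'$, by invariance of strict pseudoconvexity, though this is not needed below). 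In particular $d\widetilde F$ stays invertible up to $\Gamma$ with $(d\widetilde F)^{-1}$ of class $\mathcal{C}^1$ up to $\Gamma$, so the lifted map $\bigl(\widetilde F,(d\widetilde F)^{-1}\bigr)$ is holomorphic on $\Omega$ near $p$ and extends $\mathcal{C}^1$-smoothly to $\Gamma$.

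With this in hand, the global biholomorphic invariance of the family of stationary holomorphic discs constructed in the body of the paper --- this being exactly where the $\mathcal{C}^1$ extension of $\bigl(\widetilde F,(d\widetilde F)^{-1}\bigr)$ is used --- shows that $\widetilde F$ carries stationary discs glued to $\Gamma$ to stationary discs glued to $\Gamma'$. Now take two germs $F_1,F_2$ of $\mathcal{C}^3$ CR diffeomorphisms $\Gamma\to\Gamma'$ with $j^2_pF_1=j^2_pF_2$. Their holomorphic extensions $\widetilde F_1,\widetilde F_2$ then have the same $2$-jet at $p$: for a holomorphic map the normal derivatives at $p$ are determined by the tangential ones via the Cauchy--Riemann equations (because $T_p\Gamma$ spans $\C^{n+1}$ over $\C$), so the $2$-jet of $F_i$ along $\Gamma$ determines the full $2$-jet of $\widetilde F_i$. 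For any stationary disc $h$ of $\Gamma$ with $p$ on its boundary, $\widetilde F_1\circ h$ and $\widetilde F_2\circ h$ are stationary discs of $\Gamma'$ with $p'$ on the boundary and, by the chain rule, with the same $2$-jet there; since stationary discs of $\Gamma'$ are parametrized by their $2$-jet at a boundary point --- the model observation recalled in the Introduction, transferred to the perturbed setting in the body of the paper --- we get $\widetilde F_1\circ h=\widetilde F_2\circ h$. As the stationary discs of $\Gamma$ attached at $p$ fill a one-sided neighborhood of $p$ in $\C^{n+1}$, this yields $\widetilde F_1=\widetilde F_2$ there, hence $F_1=F_2$ as germs at $p$.

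The main obstacle is the very first step: one must ensure that, starting from a hypersurface of class merely $\mathcal{C}^4$ and a CR map of class merely $\mathcal{C}^3$, the one-sided holomorphic extension is regular enough up to $\Gamma$ --- concretely, that $\bigl(\widetilde F,(d\widetilde F)^{-1}\bigr)$ is $\mathcal{C}^1$ up to $\Gamma$ --- which is precisely the regularity threshold under which the stationary-disc machinery behind Theorem~\ref{thmjet} operates. Granting this, the rest is simply a matter of invoking the one-sided version of Theorem~\ref{thmjet} together with classical facts on CR extension from strictly pseudoconvex boundaries.
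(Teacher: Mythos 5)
Your proposal is correct and follows essentially the same route as the paper: one-sided holomorphic extension of the CR map (Lewy), boundary regularity of the extension inherited from that of $F$, invertibility of the differential at $p$ so that $\bigl(\widetilde F,(d\widetilde F)^{-1}\bigr)$ is defined and smooth up to $\Gamma$, the observation that the $2$-jet of $F$ along $\Gamma$ determines the full $2$-jet of $\widetilde F$, and then the one-sided version of Theorem~\ref{thmjet}. The only small discrepancy is that you claim only $\mathcal{C}^2$ boundary regularity for $\widetilde F$, whereas the scaling estimates behind Theorem~\ref{thmjet} use bounded third derivatives up to $\Gamma$; the paper secures this by noting that the extension is in fact $\mathcal{C}^3$ up to the boundary (as your parenthetical ``as smooth as $F$ allows'' already suggests).
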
 
Another situation where this condition occurs near any boundary point  is provided by biholomorphisms, or more generally a proper holomorphic maps, between two bounded strictly pseudoconvex domains with $\mathcal{C}^2$ boundary. So we get the following boundary version of the classical H. Cartan's uniqueness Theorem:
\begin{theo}\label{thmjetstrict}
Let $\Omega,\ \Omega'\subset \C^{n+1}$ be two bounded strictly pseudoconvex domains whose boundaries are of class 
$\mathcal{C}^4$, and let $p\in\partial\Omega$. If $F_1$ and $F_2$ are two proper maps from $\Omega$ onto $\Omega'$ with the same 2-jet at $p$, they coincide.
\end{theo}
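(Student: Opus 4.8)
The plan is to reduce Theorem~\ref{thmjetstrict} to Theorem~\ref{thmjet} by localising at $p$ and then propagating the coincidence of the two maps by analytic continuation. Set $\Gamma:=\partial\Omega$ and $\Gamma':=\partial\Omega'$. The first input is the classical boundary behaviour of proper holomorphic maps between bounded strictly pseudoconvex domains: such a map $F$ extends continuously to $\overline\Omega$ with $F(\Gamma)\subset\Gamma'$, and, since the boundaries are of class $\mathcal C^4$, the appropriate finite-smoothness version of the regularity theorems (of Fefferman, Bell--Catlin and Diederich--Fornaess type; via reflection, or via the Henkin--Lempert boundary estimates) shows that $F$ extends $\mathcal C^2$-smoothly to a neighbourhood of $p$ in $\overline\Omega$. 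In particular the $2$-jet $j^2_pF$ is well defined, so the hypothesis $j^2_pF_1=j^2_pF_2$ is meaningful and yields at least $F_1(p)=F_2(p)=:p'\in\Gamma'$.

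The second input is that a proper holomorphic map between bounded strictly pseudoconvex domains is locally biholomorphic near the boundary. To see this at $p$, pick a strictly plurisubharmonic $\mathcal C^4$ defining function $\rho'$ of $\Omega'$ and consider $u:=\rho'\circ F$: it is plurisubharmonic on $\Omega$, vanishes on $\Gamma$ near $p$, is negative inside, and by the Hopf lemma $du_p\neq0$ with nonzero normal component; hence $u$ is a defining function for $\Omega$ near $p$. Since $F^{*}$ commutes with $\partial$ and $\bar\partial$, the Levi form of $\Gamma$ at $p$ computed from $u$ is the pullback by $dF_p$ of the Levi form of $\rho'$, restricted to the complex tangent space of $\Gamma$; as $\rho'$ is strictly plurisubharmonic and $\Gamma$ is strictly pseudoconvex, this forces $dF_p$ to be injective on the complex tangent space of $\Gamma$, and, combined with the fact that $dF_p$ sends a vector normal to $\Gamma$ to a vector transverse to $\Gamma'$ (again by the Hopf estimate), a short linear-algebra argument gives $\det dF_p\neq0$. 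Thus each $F_j$ restricts to a biholomorphism $U_j\to V_j$ with $p\in U_j$ and $F_j(\Gamma\cap U_j)\subset\Gamma'$; moreover $F_j$ is defined on the pseudoconvex side of $\Gamma$ and, being $\mathcal C^2$ up to $\Gamma$ with nonvanishing Jacobian, $(F_j,dF_j^{-1})$ extends $\mathcal C^1$-smoothly to $\Gamma$ --- precisely the situation of Theorem~\ref{thmjet} and remark~\ref{cote}.

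It then remains to invoke Theorem~\ref{thmjet}: each $F_j$ is a germ at $p$ of a biholomorphism with $F_j(\Gamma)=\Gamma'$, the hypersurfaces $\Gamma,\Gamma'$ are of class $\mathcal C^4$, $\Gamma$ is strictly pseudoconvex hence Levi non-degenerate at $p$, and $j^2_pF_1=j^2_pF_2$; therefore $F_1\equiv F_2$ on a neighbourhood of $p$ in $\C^{n+1}$. In particular $F_1$ and $F_2$ agree on a nonempty open subset of the connected complex manifold $\Omega$, so by the identity principle $F_1=F_2$ throughout $\Omega$.

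The step I expect to be the main obstacle --- the only one that is not essentially formal once Theorem~\ref{thmjet} is available --- is the a priori boundary regularity of proper maps. The standard extension theorems are usually stated for $\mathcal C^\infty$ boundaries, so one must be careful to quote (or reprove) a version valid under the mere $\mathcal C^4$ hypothesis that still delivers a $\mathcal C^2$ extension near $p$, equivalently a $\mathcal C^1$ extension of $(F,dF^{-1})$. The inevitable loss of finitely many derivatives in such statements is harmless here, since $4$ comfortably exceeds the needed threshold; but it is the point at which one relies on external several-complex-variables machinery rather than on the stationary-disc method developed in this paper.
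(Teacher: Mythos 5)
Your proposal is correct and follows exactly the route the paper intends: the paper gives no separate proof of Theorem~\ref{thmjetstrict}, but derives it by observing (via the classical boundary regularity and local biholomorphy of proper maps between bounded strictly pseudoconvex domains) that $(F,dF^{-1})$ is defined on one side of the boundary and extends $\mathcal{C}^1$-smoothly to it, so that the one-sided version of the proof of Theorem~\ref{thmjet} described in Remark~\ref{cote} applies, after which the identity principle propagates the coincidence to all of $\Omega$. Your write-up simply fills in the details (Hopf lemma/Levi-form argument for $\det dF_p\neq 0$, the finite-smoothness version of the extension theorems) that the paper leaves implicit.
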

\noindent This statement is in the following of previous results on holomorphic auto-applications $F$ on a domain $D$. Let us recall that if $D$ is a connected, smoothly bounded, strongly pseudoconvex domain, and $F(z)=z+O(|z-p|^4)$, then $F=id$ \cite{BK}. If $D$ is strongly convex, the assumption reduces to $F(z)=z+o(|z-p|^2)$ \cite{Hu}. Finally, see \cite{LM} for some finite jet determination results for proper maps, without convexity but with real-analytic boundary regularity hypothesis on the domains.

\bigskip

One of the main interests of a method using holomorphic discs is that it usually extends to the almost complex setting.
In the last section, we explain how to obtain almost complex analogues of Theorems \ref{thmjet} and \ref{thmjetstrict}. 
Our most significant result  in the almost complex case is the following:
\begin{theo}\label{theopC}
Let $J$ and $J'$ be two almost complex structures of class $\mathcal{C}^{3}$ 
defined in $\R^{4}$. 
Let $\Gamma$, $\Gamma'$ be two real hypersurfaces of class $\mathcal{C}^4$. Assume $\Gamma$ is $J$-Levi non-degenerate at
$p$. Then the germs at $p$ of $(J,J')$-biholomorphisms $F$ such that $F(\Gamma)=\Gamma'$ are uniquely determined by their 2-jet at $p$.
\end{theo}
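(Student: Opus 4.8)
The plan is to run the proof of Theorem~\ref{thmjet} in the almost complex category, with $J$-holomorphic discs in place of holomorphic ones. As a first step I would normalize at $p$: after a local diffeomorphism one may take $p=0$ and $J(0)=J_{st}$, and then, exactly as in the integrable case treated above, a suitable dilation brings the data $(\Gamma,J)$ into a regime where they form a small perturbation of the model hyperquadric $Q\subset\R^4$ equipped with the standard structure $J_{st}$ --- precisely, $\|J-J_{st}\|_{\mathcal{C}^2}$ is as small as we wish on a fixed ball and $\Gamma$ is $\mathcal{C}^4$-close to $Q$; the $J$-Levi non-degeneracy at $p$ survives these operations.

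Second, I would build the family of small $J$-stationary discs glued to $\Gamma$. As in the previous sections one lifts a $J$-holomorphic disc $h\colon\bar\D\to\R^4$ to a disc in (a complexification of) the cotangent bundle whose boundary is glued to the conormal bundle of $\Gamma$; the non-integrability of $J$ contributes only a lower-order term which is $\mathcal{C}^1$-small in the above regime, so the associated boundary value problem is again a small perturbation of the model Riemann--Hilbert problem. Levi non-degeneracy makes the linearization at the model discs an isomorphism of Banach spaces, so the implicit function theorem yields, for each admissible value of the $2$-jet at $p$, a unique small $J$-stationary disc glued to $\Gamma$ with that $2$-jet, depending smoothly on the parameters; and, just as for $Q$, the discs so obtained with $p$ on their boundary sweep out a one-sided neighborhood of $p$ in $\R^4$.

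Third comes the invariance step, which is the crux. Because the stationarity condition is intrinsic (a glued disc together with its conormal lift), a $(J,J')$-biholomorphism $F$ with $F(\Gamma)=\Gamma'$ takes $J$-stationary discs glued to $\Gamma$ to $J'$-stationary discs glued to $\Gamma'$, and the $2$-jet of $F\circ h$ at $p':=F(p)$ depends only on $j^2_pF$ and $j^2_ph$. Hence if $F_1,F_2$ have the same $2$-jet at $p$ then $F_1(p)=F_2(p)=:p'$ and for every $J$-stationary disc $h$ glued to $\Gamma$ with $p$ on its boundary, $F_1\circ h$ and $F_2\circ h$ are $J'$-stationary discs glued to $\Gamma'$ with the same $2$-jet at $p'$; since $F$ conjugates the two CR structures, $\Gamma'$ is $J'$-Levi non-degenerate at $p'$ and the uniqueness statement of the second step applies, so $F_1\circ h=F_2\circ h$. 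Letting $h$ vary shows $F_1=F_2$ on a one-sided, hence open, neighborhood of $p$, and since $(J,J')$-holomorphic maps solve an elliptic system they enjoy the unique continuation property, so $F_1=F_2$ on the connected neighborhood where both are defined.

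I expect the main obstacle to be the second step: the conormal/stationary-disc construction has no canonical counterpart in the non-integrable setting, since the cotangent bundle carries no natural almost complex structure, so one must work with a suitable partial lift and check that the resulting system stays elliptic with the same Fredholm index as the model; moreover the discs are only as smooth as a $\mathcal{C}^4$ hypersurface and a $\mathcal{C}^3$ structure allow, so both the faithful $2$-jet parametrization and the fill-up statement require a careful bootstrap in H\"older spaces. This is also the point at which the restriction to $\R^4$ enters: it is in complex dimension two that the globally biholomorphically invariant family of $J$-stationary discs can be constructed with the boundary $2$-jet as a genuine parameter.
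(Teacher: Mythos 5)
Your overall architecture matches the paper's (normalize, dilate, build a finite-parameter invariant family of stationary discs, conclude via the jet parametrization and unique continuation), but the dilation step as you state it is a genuine gap. You assert that a suitable dilation makes $\|J-J_{st}\|$ as small as desired on a fixed ball. This is false in general: writing $J(z)$ in block form with off-diagonal block $B(z)=\sum_k(B_{2k}x_k+B_{2k+1}y_k)+O(\|z\|^2)$, the anisotropic dilation $\Lambda_t:(z_0,z_\alpha)\mapsto(t^2z_0,tz_\alpha)$ produces $J_t=\Lambda_t^{-1}\circ J(\Lambda_t)\circ\Lambda_t$ whose off-diagonal block is $\frac{1}{t}\,B\circ\Lambda_t$; the part of $B$ linear in $z_\alpha$ survives as $t\to0$, so $J_t$ converges to a model structure $J_0$ which need not be $J_{st}$. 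This is exactly where the restriction to $\R^4$ enters --- not, as you suggest, in the construction of the disc family, which the paper carries out in all dimensions. In complex dimension two one can first choose coordinates making $J$ block-diagonal at the origin, and after the quadric-normalizing change of variables $z_0=z_0'-cz_1^2$ the off-diagonal block becomes $O(|z_1|\,\|z\|)$, which does vanish under dilation, giving $J_0=J_{st}$. Without this lemma your argument would prove the theorem in every dimension, whereas in higher dimensions the paper only obtains the statement under the additional hypothesis that the limiting model structure $J_0$ is close to $J_{st}$.

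A secondary point: you flag as the ``main obstacle'' that the cotangent bundle carries no natural almost complex structure and propose an unspecified partial lift. In fact the paper uses Sato's canonical lift $\bm{J}$ of $J$ to $T^*\R^{2n+2}$, which has precisely the invariance property needed (the cotangent lift of a $(J,J')$-biholomorphism is a $(\bm{J},\bm{J'})$-biholomorphism), and defines $J$-stationary discs as $\bm{J}$-holomorphic lifts glued to the conormal bundle, which is again totally real by Levi non-degeneracy. The boundary value problem is then the integrable one augmented by a $\bar\partial_{\bm{J}}$ component, and its solvability reduces to the surjectivity of $u\mapsto 2\Re e(\bar G u)$, governed by the same partial indices already computed for the model. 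So this step is not an obstruction, but your proposal does not supply the construction it relies on. The remaining steps (invariance of the family under $F$, injectivity of $\bm{f}\mapsto\bm{f}'(1)$ --- i.e.\ of the $1$-jet of the lift, which encodes the $2$-jet of $F$ --- and propagation of $F_1=F_2$ by unique continuation along chains of pseudoholomorphic discs) are consistent with the paper.
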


\bigskip
Let us finally notice that finite jet determination problem is relevant in higher codimension \cite{Belo,L}. We recall that if $\Gamma$ is a generic real-analytic submanifold of any codimension with non-degenerate Levi form at $p$, then its local biholomorphisms are uniquely determined by their 2-jet at $p$. Since stationary discs play a role not only for hypersurfaces \cite{Tumanov}, our method could give results even in this case.

\bigskip

This paper is organized as follows. In the first section, we recall some classical facts about Levi non-degenerate hypersurfaces and stationary discs. Section 2 and Section 3 are devoted to the parametrization of the family of stationary discs whose boundaries pass through a prescribed point, respectively for the model case (non-degenerate hyperquadric) and for small perturbations. We then prove Theorem \ref{thmjet} and Theorem \ref{thmcr} in Section 4. Finally we give in Section 5 the extension to the almost complex situation.

\vskip 0,1cm
\noindent  {\it Acknowledgments.} The authors would like to thank B. Lamel for getting them interested in the problem and for stimulating  discussions and helpful remarks.

\section{A family of biholomorphic invariants}

\subsection{Levi non-degenerate hypersurfaces}
For $z\in\C^{n+1}$ we write $z=(z_0,z_\alpha)$ where
$z_\alpha=(z_1,\hdots,z_n)$. As usual $x_j=\Re e z_j$ and $y_j=\Im m z_j$. We
deal with real hypersurfaces in $\C^{n+1}$:
$$\Gamma=\{(x_0,\hdots,y_n)\in\C^{n+1}\ |\ \rho(x_0,\hdots
y_n)=0\}=\{z\in\C^{n+1}\ |\ \rho(z)=0\}$$ where 
$\rho$ is a $\mathcal{C}^4$ defining function of $\Gamma$,
that is $\rho$ is of class 
$\mathcal{C}^4$ and 
$d\rho$ does not vanish on
$\Gamma$. Then the tangent space at $p\in\Gamma$ is $T_p\Gamma=\mathrm{Ker}\,d\rho_p$, and the complex tangent space at $p$ (\textsl{i.e.} the biggest complex subspace of $T_p\Gamma$) is $T_p^\C\Gamma=T_p\Gamma\cap iT_p\Gamma$. 
\begin{defi} 
The hypersurface $\Gamma$ is {\it Levi non-degenerate} at point $p\in\Gamma$ if the restriction to $T_p^\C\Gamma$ of the Hermitian form $\sum_{0\le i,j\le n}\frac{\partial^2\rho}{\partial\bar{z}_j\partial z_i}\bar{z}_jz_i$ is non-degenerate.
\end{defi}
The model cases are non-degenerate hyperquadrics $x_0=\tsp \overline{z}_\alpha Az_\alpha$ with $A$ an invertible Hermitian matrix, which are everywhere Levi non-degenerate.  

Pick $p\in\Gamma$. Up to a linear transformation, then using the implicit function theorem, we can
assume that $p=0$ and the tangent space to $\Gamma$ at 0 is $x_0=0$,
hence $\Gamma$ is given by the equation
$$x_0=\left(\begin{array}{c}{\,}_{\rm{real\ quadratic\ terms\ in}}\\  
    y_0,x_1,\hdots,y_n\end{array}\right)+ O(|(y_0, z_\alpha)|^3).$$
Instead of grouping terms together according to their degree, we regroup them
by weight (see \cite{CM}): a weight 2 is assigned to $z_0$ and
a weight 1 is assigned to $z_\alpha$. We get the second member of the previous equality
decomposed in three parts:
\begin{itemize}
\item terms of weight 2: a real quadratic form in $z_\alpha$, that is
  $q(z_\alpha)+\overline{q(z_\alpha)}+(Hermitian\ form\ in\ z_\alpha)$ with $q$ being
  a complex quadratic form;
\item terms of weight at least 3: linear combination of $y_0^2$, $y_0x_j$ and
  $y_0y_j$ ($1\le j\le n)$;
\item $O(|(y_0, z_\alpha)|^3)$, which would consist of terms of weight at
  least 3 if the hypersurface were real analytic.
\end{itemize}
After the local change of coordinates $z_0=z_0'-2q(z_\alpha),\
z_\alpha=z_\alpha'$, we obtain that the hypersurface $\Gamma$ is given in a
neighborhood of 0 by the defining function

\begin{equation}\label{normalform}
\rho(z)=x_0-\tsp \overline{z}_\alpha Az_\alpha+b_0y_0^2+\sum_{j=1}^n(b_jz_j+\bar{b}_j\bar{z}_j)y_0+O(|(y_0,z_\alpha)|^3).
\end{equation}
This depends only on $\Gamma$ (and on the point $p$). Notice that
$b_0=\frac{1}{2}\frac{\partial^2\rho}{\partial y_0^2}(0)$,
$b_j=\frac{\partial^2\rho}{\partial y_0\partial z_j}(0)$ and $A$ is the Hermitian matrix
$A=-\left(\frac{\partial^2\rho}{\partial\bar{z}_j\partial z_i}(0)\right)_{1\le
  i,j\le n}$.
Hence $\Gamma$ is Levi non-degenerate at 0 iff the matrix $A$ is invertible. In this case, $\Gamma$ is also Levi non-degenerate in a neighborhood of 0.

\bigskip

Afterwards, we assume that functions $\rho$ are under
the normal form (\ref{normalform}), and set $\Gamma^{\rho}=\{z\ |\ \rho(z)=0\}$: locally, $\Gamma^\rho$ ``looks
like'' the hyperquadric of equation $x_0=\tsp \overline{z}_\alpha Az_\alpha$.
The Hermitian matrix $A$ is not a biholomorphic invariant of $\Gamma^\rho$, but its signature
$$(number\ of\ positive\ eigenvalues,\ number\ of\ negative\
eigenvalues)$$ 
is a biholomorphic invariant up to the sign. In particular, if $\Gamma^\rho$ is Levi non-degenerate at 0 and $\Gamma^{\rho'}$ is locally biholomorphic to $\Gamma$ in a neighborhood of 0, then $\Gamma^{\rho'}$ is also Levi non-degenerate at 0 (and the signature of $A'$ is $\pm\,\mathrm{sgnt}(A)$).

\begin{defi}
For $p\in \Gamma$, let   
$N_p^*\Gamma:=\{\phi\in T_p^*\C^{n+1} | \Re e \phi_{|T_p\Gamma}=0\}$ 
be the is a real line generated by $\partial\rho_p$.\\ 
The {\it conormal bundle} $N^*\Gamma$ of $\Gamma$ is the bundle over $\Gamma$ whose fiber at $p\in\Gamma$ is $N_p^*\Gamma$. 
\end{defi}

In particular, $N^*\Gamma$ is a real submanifold of dimension $2n+2$ of the complex manifold $T^*\C^{n+1}$.
 We will need the 
following characterization due to A. Tumanov:
\begin{prop}\label{propco}{\cite{Tumanov}}
A real hypersurface $\Gamma \subset \C^{n+1}$ is Levi non-degenerate if and only if its 
conormal bundle $N^*\Gamma$ is totally real out of the zero section.
\end{prop}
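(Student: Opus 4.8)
The plan is to compute the tangent space of $N^*\Gamma$ explicitly and to read off total reality as a nondegeneracy condition for the Levi form. Since $\partial\rho$ does not vanish on $\Gamma$, writing $\zeta=(\zeta_0,\dots,\zeta_n)$ for the fibre coordinates of $T^*\C^{n+1}$ dual to $dz_0,\dots,dz_n$, the map $(q,\lambda)\mapsto(q,\lambda\,\partial\rho(q))$ embeds $\Gamma\times\R$ onto
\[
N^*\Gamma=\{(z,\lambda\,\partial\rho(z))\ :\ \rho(z)=0,\ \lambda\in\R\},
\]
the zero section corresponding to $\lambda=0$. Differentiating this parametrization, the tangent space to $N^*\Gamma$ at a point $(q,\lambda\,\partial\rho(q))$ is the real span of the vertical vector $(0,\partial\rho(q))$ together with the vectors $(v,\lambda\,D_v\partial\rho(q))$, $v\in T_q\Gamma$, where $D_v$ denotes directional derivative; here a real tangent vector $v$ to $\C^{n+1}$ is identified with the complex vector $(v_0,\dots,v_n)$, so that $D_vf=\sum_i(v_i\,\partial f/\partial z_i+\bar v_i\,\partial f/\partial\bar z_i)$.

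The heart of the matter is one computation. Componentwise, $D_v(\partial\rho/\partial z_k)=\sum_i\frac{\partial^2\rho}{\partial z_i\partial z_k}v_i+\sum_i\frac{\partial^2\rho}{\partial\bar z_i\partial z_k}\bar v_i$, so that
\[
i\,D_v\partial\rho(q)-D_{iv}\partial\rho(q)=2i\,L_v,\qquad\text{where}\quad L_v:\ w\longmapsto\sum_{i,k}\frac{\partial^2\rho}{\partial\bar z_i\partial z_k}(q)\,\bar v_i\,w_k .
\]
Thus the second derivatives $\partial^2\rho/\partial z_i\partial z_k$ cancel and $L_v$ is exactly the Levi form paired with $v$. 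Now let $w=a\,(0,\partial\rho(q))+(v,\lambda\,D_v\partial\rho(q))$, $a\in\R$, $v\in T_q\Gamma$, be tangent to $N^*\Gamma$, and ask when $iw$ is again tangent. Matching base components forces $iv\in T_q\Gamma$, i.e.\ $v\in T_q^\C\Gamma$; matching fibre components and using the displayed identity, this reduces to $2i\lambda\,L_v=(a'-ia)\,\partial\rho(q)$, where $a'\in\R$ is the vertical coefficient in the corresponding decomposition of $iw$. Since $a'-ia$ ranges over all of $\C$, the requirement is exactly that $\lambda\,L_v\in\C\cdot\partial\rho(q)$.

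It remains to turn this into the two implications. If $\Gamma$ is Levi non-degenerate at $q$ and $\lambda\neq0$, then $L_v\in\C\cdot\partial\rho(q)$ means that $w\mapsto L_v(w)$ vanishes on $\ker\partial\rho(q)=T_q^\C\Gamma$; as $v\in T_q^\C\Gamma$ and the Levi form is nondegenerate there, $v=0$, and then $w=a\,(0,\partial\rho(q))$ with $iw$ tangent forces $a=0$ because $\partial\rho(q)\neq0$. Hence $T(N^*\Gamma)\cap i\,T(N^*\Gamma)=0$ off the zero section. Conversely, if the Levi form degenerates at some $q$, pick $v\in T_q^\C\Gamma\setminus\{0\}$ in its kernel; then $w\mapsto L_v(w)$ vanishes on the complex hyperplane $\ker\partial\rho(q)$, hence is a multiple of $\partial\rho(q)$, and reversing the computation yields, for every $\lambda\neq0$, a nonzero tangent vector $w$ of $N^*\Gamma$ at $(q,\lambda\,\partial\rho(q))$ with $iw$ also tangent, so $N^*\Gamma$ is not totally real there. (With $\lambda=0$ the vectors $(v,0)$, $v\in T_q^\C\Gamma$, already violate total reality, which is why the zero section must be excluded.)

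The main obstacle is the computation in the second paragraph — seeing that the Levi form is precisely what measures the failure of $T(N^*\Gamma)$ to be totally real, which comes out of writing $D_v\partial\rho$ in coordinates and observing that the derivatives $\partial^2\rho/\partial z_i\partial z_k$ cancel in $i\,D_v\partial\rho-D_{iv}\partial\rho$ while the mixed ones $\partial^2\rho/\partial\bar z_i\partial z_k$ survive; the remainder is linear algebra using that $\partial\rho(q)$ cuts out $T_q^\C\Gamma$. Regularity is not an issue: $\rho\in\mathcal C^4$ gives $\partial\rho\in\mathcal C^3$, so $N^*\Gamma$ is a $\mathcal C^3$ submanifold and all derivatives above are classical.
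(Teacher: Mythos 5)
Your proof is correct. Note that the paper itself gives no argument for this proposition: it is quoted from Tumanov's work \cite{Tumanov} and used as a black box, so there is no in-paper proof to compare against. Your direct computation — parametrizing $N^*\Gamma$ by $(q,\lambda)\mapsto(q,\lambda\,\partial\rho(q))$, observing that the holomorphic second derivatives cancel in $i\,D_v\partial\rho-D_{iv}\partial\rho$ so that only the Levi form survives, and then using that $T_q^\C\Gamma=\ker\partial\rho(q)$ to translate $L_v\in\C\cdot\partial\rho(q)$ into degeneracy of the Levi form — is exactly the standard route and is a valid, self-contained verification. One small imprecision: in the sentence ``since $a'-ia$ ranges over all of $\C$, the requirement is exactly that $\lambda L_v\in\C\cdot\partial\rho(q)$,'' the coefficient $a$ is fixed by the given $w$ and only $a'$ is free, so the displayed equation also pins down $a$ in terms of $\lambda$ and the multiple $\mu$ with $L_v=\mu\,\partial\rho(q)$. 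This does not affect either implication — in the forward direction you only need the necessary condition $\lambda L_v\in\C\cdot\partial\rho(q)$, and in the converse you are free to choose $a$ — but stating the equivalence of tangency of $iw$ with the single condition $\lambda L_v\in\C\cdot\partial\rho(q)$ is, strictly speaking, not accurate for arbitrary $a$.
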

Let us recall that a submanifold of a complex manifold is {\it totally real} at some point if its complex tangent space at this point is trivial.

\subsection{Stationary discs} \label{subsection-stationary}
A {\it holomorphic disc} $h$ is a holomorphic function on the open unit disc
$\Delta \subset \C$. When $h$ is continuous up to the 
boundary and $h(\partial\Delta)$ is included in some submanifold, we say that $h$ is {\it glued} to this submanifold.

The first useful case is when the submanifold is a hypersurface, for example the boundary of a domain $D$. Then holomorphic discs in $D$ glued to $\partial D$ are invariants of $\overline{D}$. The second interesting case is when the submanifold is maximally totally real, which gives for instance some properties of smoothness for the discs up to the boundary. In our case, looking at Levi non-degenerate hypersurfaces, we can take advantage of both situations by using Proposition \ref{propco}. This leads to the study of stationary discs.

\begin{defi}\label{defstat}
A holomorphic disc $h$ glued to the real hypersurface $\Gamma$ is {\it stationary} if there exists a holomorphic lift $\bm{h}=(h,g)$ of $h$ to the cotangent bundle $T^*\C^{n+1}$, continuous up to the boundary, such that $\forall\zeta \in\partial\Delta,\ \bm{h}(\zeta)\in\mathscr{N}\Gamma(\zeta)$
where
$$\mathscr{N}\Gamma(\zeta):=\{(z,\zeta w)\ |\ z\in\Gamma,\ w\in N_z^*\Gamma\setminus\{0\}\}.$$
The set of these lifted discs $\bm{h}=(h,g)$, with $h$ non-constant, is denoted by $\mathscr{S}(\Gamma)$.
\end{defi}

Note that if $\bm{h}=(h,\zeta h^*)\in\mathscr{S}(\Gamma)$ satisfies $h(1)=0$, then there exists $b\in\R^*$ such that $h^*(1)=(b,0,\hdots,0)$ since $\partial\rho_0=(\frac{1}{2},0,\hdots,0)$ (the defining equation of $\Gamma$ being in normal form). We set 
$$\mathscr{S}^*(\Gamma):=\{\bm{h}=(h,\zeta h^*)\in\mathscr{S}(\Gamma)\ |\ h(1)=0,\ h^*(1)=(1,0,\hdots,0)\}.$$

In local coordinates, Definition \ref{defstat} is equivalent to the existence of a continuous function 
$c:\partial\Delta\to\R^*$ such that $g(\zeta)=\zeta c(\zeta)d\rho_{h(\zeta)}$ on $\partial\Delta$ and 
 extends holomorphically to $\Delta$.
We will often set $g(\zeta)=\zeta h^*(\zeta)$: then $h^*$ is meromorphic with at most one pole of order one at 0, and 
$(h,h^*)(\partial\Delta)$ is included in the conormal bundle out of the zero section. We actually need to allow the fibered part $h^*$ to have a pole of order one at 0 (let us notice that holomorphic discs glued to the unit sphere have no holomorphic lift to the cornormal bundle). 

\medskip

If $\Gamma=\Gamma^\rho$ is Levi non-degenerate, then by Proposition \ref{propco} $\mathscr{N}\Gamma(\zeta)$ is totally real for all $\zeta\in\partial\Delta$.
We say that the holomorphic disc $\bm{h}$ is {\it attached to the totally real fibration} $\mathscr{N}\Gamma=\{\mathscr{N}\Gamma(\zeta)\}$ (see section \ref{sectionGlob} for the definition of a totally real fibration over $\partial\Delta$). 
\bigskip

Notice that, in light of Proposition \ref{propco}, the regularity results
known for holomorphic discs glued to a totally real submanifold \cite{Chirka}
apply to the map $\bm{h}$ if $\Gamma$ is Levi non-degenerate. Hence the elements of $\mathscr{S}(\Gamma)$ (and thus stationary discs 
and their regular lifts) inherit their regularity to the boundary from the H\"olderian regularity of $N^*\Gamma$: since $\Gamma$ is $\mathcal{C}
^{4}$, $N^*\Gamma$ is $\mathcal{C}^{2,\epsilon}$ for any $0<\epsilon<1$ and the elements in $\mathscr{S}(\Gamma)$ are in $\mathrm{Hol}
(\Delta)\cap\mathcal{C}^{2,\epsilon}(\bar{\Delta})$. This means that every $\bm{h}\in\mathscr{S}(\Gamma)$ satisfies $\bm{h}\in\mathcal{C}
^{2,\epsilon}(\bar{\Delta},T^*\C^{n+1})$ or, equivalently, $\bm{h}_{|\partial\Delta}\in\mathcal{C}^{2,\epsilon}(\partial\Delta,T^*\C^{n+1})$. The 
spaces $\mathcal{C}^{k,\epsilon}(\partial\Delta)$, $0<\epsilon<1$, $k\in\N$ are equipped with their usual norm:
$$\|\bm{h}\|_{\mathcal{C}^{k,\epsilon}(\partial\Delta)}=\sum_{l=0}^{k}\|\bm{h}^{(l)}\|_\infty+
\underset{\zeta\not=\eta\in\partial\Delta}{\mathrm{sup}}\frac{\|\bm{h}(\zeta)-\bm{h}(\eta)\|}{|\zeta-\eta|^\epsilon},$$
where $\|\bm{h}^{(l)}\|_\infty:=\underset{\partial\Delta}{\mathrm{max}}\|\bm{h}^{(l)}\|$.

The interest for stationary discs comes from the fact they are biholomorphic invariants. More precisely, if $F$ is a biholomorphism such that $F(\Gamma)\subset\Gamma'$, and $h$ is a stationary disc glued to $\Gamma$, then $F\circ h$ is a stationary disc glued to $\Gamma'$: actually, if $h^*$ is a regular lift of $h$, then $h^*\cdot (d{F}_{h})^{-1}$ is a regular lift of $F\circ h$. We denote, for $\bm{h}=(h,\zeta h^*)\in\mathscr{S}(\Gamma)$:
$$F_*\bm{h}:\zeta\mapsto\left(F(h(\zeta)),\zeta h^*(\zeta)(d{F}_{h(\zeta)})^{-1}\right)$$
and get $F_*\bm{h}\in\mathscr{S}(\Gamma')$. 

\begin{rem}
Assume $F$ is the identity up to order one, that is, $F(0)=0$ and $dF_0=\mathrm{id}$. Then for every $\bm{h}\in\mathscr{S}^*(\Gamma)$,  we get $F_*\bm{h}\in\mathscr{S}^*(\Gamma')$.
\end{rem}

\section{The model situation: biholomorphisms between two hyperquadrics}

In this section, we study the following situation. Assume $F$ is a
biholomorphism fixing $0$ between two hyperquadrics $Q^A=\{r^A=0\}$ and
$Q^{A'}=\{r^{A'}=0\}$ in $\C^{n+1}$:
$$r^A(z):=\Re e z_0-\tsp \overline{z}_\alpha Az_\alpha$$
$$r^{A'}(z):=\Re e z_0-\tsp \overline{z}_\alpha A'z_\alpha,$$
where the Hermitian matrices $A$ and $A'$ are assumed to be invertible. We first prove
that such biholomorphisms are determined by their 2-jet at 0. It suffices to
consider the case when $Q^{A}=Q^{A'}:=Q$ and $F$ is equal to the identity up to
order two. Even if the automorphisms of hyperquadrics are easy to determine
(see for instance \cite{CM}), we will not use their explicit expression but only the fact that the family of 
stationary discs is a global biholomorphic invariant. This method will be generalized in the following sections.

\subsection{Stationary discs glued to $Q$}
Stationary discs glued to a non-degenerate hyperquadric are explicitly
known, and they are actually uniquely determined by their 2-jet at some boundary
point. Let us recall that, based on Propositions 2.1 and 2.3 from \cite{BL}, we have the following explicit expression for
the elements of $\mathscr{S}(Q)$:

\begin{prop} 
The elements $\bm{h}=(h,\zeta h^*)\in\mathscr{S}(Q)$ are exactly under the form
$$h(\zeta)=\left(\tsp\bar{v}Av+2\tsp\bar{v}Aw\,\frac{\zeta}{1-a\zeta}+\frac{\tsp\bar{w}Aw}{1-|a|^2}\,\frac{1+a\zeta}{1-a\zeta}+iy_0,v+w\,\frac{\zeta}{1-a\zeta}\right)$$
$$\zeta h^*(\zeta)=b(\zeta-\overline{a})(1-a\zeta)\times(1/2,-\tsp\overline{h_\alpha(\zeta)}A)$$
where $a\in\Delta$, $v\in\C^n$, $w\in\C^n\setminus\{0\}$, $y_0\in\R$, and $b\in\R^*$. 
Moreover, the  map $(a,v,w,y_0,b)\mapsto \bm{h}$ is a smooth parametrization of $\mathscr{S}(Q)$,
which makes $\mathscr{S}(Q)$ smoothly diffeomorphic to $\Delta\times\C^n\times(\C^n\setminus\{0\})\times\R\times\R^*$.
\end{prop}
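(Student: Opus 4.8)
The plan is to compute $\mathscr{S}(Q)$ explicitly by solving the stationarity condition, and then verify that the resulting parametrization is smooth with the claimed domain. First I would fix the lift: writing $\bm{h}=(h,\zeta h^*)$ with $h^*$ meromorphic having at most a simple pole at $0$, the boundary condition $\bm{h}(\zeta)\in\mathscr{N}Q(\zeta)$ for $\zeta\in\partial\Delta$ means there is a continuous $c:\partial\Delta\to\R^*$ with $\zeta h^*(\zeta)=\zeta c(\zeta)\partial r^A_{h(\zeta)}$. Since $\partial r^A_z=(1/2,-\tsp\overline{z}_\alpha A)$, this reads $h^*(\zeta)=c(\zeta)(1/2,-\tsp\overline{h_\alpha(\zeta)}A)$ on $\partial\Delta$, together with the gluing condition $\Re e\, h_0(\zeta)=\tsp\overline{h_\alpha(\zeta)}Ah_\alpha(\zeta)$ on $\partial\Delta$.

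The key step is to exploit the structure coming from Propositions 2.1 and 2.3 of \cite{BL}, which already describe stationary discs glued to a hyperquadric in terms of their fibered component: the function $\zeta h^*(\zeta)$, being holomorphic on $\Delta$ and equal on $\partial\Delta$ to $\zeta c(\zeta)$ times a conjugate-linear expression in $h_\alpha$, forces $h_\alpha$ to be (affine in) a Blaschke-type factor $\zeta/(1-a\zeta)$ with a single parameter $a\in\Delta$ and a direction $w\in\C^n\setminus\{0\}$, plus a constant $v\in\C^n$; the scalar factor must then be of the form $b(\zeta-\overline{a})(1-a\zeta)$ with $b\in\R^*$ in order to cancel the pole, be holomorphic, and have the correct reality on the circle. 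Substituting $h_\alpha(\zeta)=v+w\,\zeta/(1-a\zeta)$ into the gluing relation $\Re e\, h_0 = \tsp\overline{h_\alpha}Ah_\alpha$ on $\partial\Delta$ and solving for the holomorphic function $h_0$ (its real part on $\partial\Delta$ being prescribed, so $h_0$ is determined up to an additive imaginary constant $iy_0$) yields exactly the displayed formula for $h_0$; here one uses that on $\partial\Delta$, $\overline{\zeta/(1-a\zeta)} = \zeta\bar a/(a\zeta-1)\cdot(\text{something})$ can be rewritten so that $|h_\alpha|^2$-type terms become boundary values of a holomorphic function, producing the $\frac{1+a\zeta}{1-a\zeta}$ and $\frac{\zeta}{1-a\zeta}$ contributions with the stated coefficients $\tsp\bar vAv$, $2\tsp\bar vAw$, $\tsp\bar wAw/(1-|a|^2)$.

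Having obtained that every element of $\mathscr{S}(Q)$ is of the stated form and conversely that every such $(a,v,w,y_0,b)$ with $w\neq0$, $b\neq0$ gives a genuine element of $\mathscr{S}(Q)$ (the disc is non-constant precisely because $w\neq0$), it remains to check that $(a,v,w,y_0,b)\mapsto\bm{h}$ is a bijection onto $\mathscr{S}(Q)$ and a diffeomorphism. Injectivity follows by reading off $a$ as the pole of $h_\alpha$ (equivalently from the factor $1-a\zeta$), then $w$ from the residue-type data at $\zeta=1/a$, then $v$ as $h_\alpha(0)$, then $y_0=\Im m\, h_0(0)$ after subtracting the determined real part, and finally $b$ from the normalization of $\zeta h^*$; smoothness of the map and of its inverse is immediate since all these operations are rational in the parameters (with $|a|<1$ and the denominators $1-a\zeta$, $1-|a|^2$ nonvanishing). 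Thus $\mathscr{S}(Q)\cong\Delta\times\C^n\times(\C^n\setminus\{0\})\times\R\times\R^*$.

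I expect the main obstacle to be the algebraic bookkeeping in the substitution step: verifying that solving $\Re e\, h_0(\zeta)=\tsp\overline{h_\alpha(\zeta)}Ah_\alpha(\zeta)$ on $\partial\Delta$ for a holomorphic $h_0$ produces exactly the three displayed terms with the precise coefficients (in particular the factor $1/(1-|a|^2)$) requires carefully rewriting each conjugated Blaschke factor on $\partial\Delta$ as the boundary value of a holomorphic function and tracking cancellations; similarly, checking that the scalar factor $b(\zeta-\overline a)(1-a\zeta)$ is the unique way to make $\zeta h^*$ holomorphic, pole-free, and real-normalized at $\zeta=1$ is where the constraints must be combined correctly. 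Everything else — non-constancy, the bijection, the diffeomorphism property — is then routine.
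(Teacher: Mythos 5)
The paper offers no proof of this Proposition: it is recalled verbatim from Propositions 2.1 and 2.3 of \cite{BL}, so there is no in-text argument to compare yours against step by step. Your sketch is nonetheless a correct outline of the underlying derivation, and in the right order once slightly reorganized: the zeroth component of the lift, $\zeta h_0^*(\zeta)$, is a holomorphic function equal on $\partial\Delta$ to $\zeta$ times a real nonvanishing function, hence a polynomial $c_0+c_1\zeta+c_2\zeta^2$ with $c_1\in\R$, $c_0=\bar c_2$, whose roots pair up as $\{\bar a,1/a\}$; this pins down the scalar factor $\tfrac{b}{2}(\zeta-\bar a)(1-a\zeta)$ with $a\in\Delta$, $b\in\R^*$ \emph{before} one touches $h_\alpha$. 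Then holomorphy of $\zeta h^*_\alpha=-b(\zeta-\bar a)(1-a\zeta)\,\tsp\overline{h_\alpha(\zeta)}A$ forces the Taylor coefficients of $h_\alpha$ to satisfy the recurrence $av_m-(1+|a|^2)v_{m+1}+\bar a v_{m+2}=0$ for $m\ge 1$, whose only solution bounded in $\Delta$ gives $h_\alpha=v+w\,\zeta/(1-a\zeta)$; finally the gluing relation determines $h_0$ up to $iy_0$ via the identities $\overline{\zeta/(1-a\zeta)}=1/(\zeta-\bar a)$ and $\zeta/((\zeta-\bar a)(1-a\zeta))=(1-|a|^2)^{-1}\Re e\,\frac{1+a\zeta}{1-a\zeta}$ on $\partial\Delta$. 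The only genuinely loose point in your write-up is this middle step — you assert rather than derive the Blaschke-type form of $h_\alpha$, and your stated conjugation identity for $\zeta/(1-a\zeta)$ is garbled — but since you, like the authors, may simply quote \cite{BL} for the explicit form, the proposal is acceptable as it stands.
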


\begin{cor}\label{submersion}\label{corsub}
The maps $\bm{h}\mapsto{h}(1)\in Q$ and $\bm{h}\mapsto\bm{h}(1)\in N^*Q$ defined on $\mathscr{S}(Q)$ are submersions.
\end{cor}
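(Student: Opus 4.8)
The plan is to derive the two submersion statements from the explicit parametrization of $\mathscr{S}(Q)$ given in the preceding Proposition. Since $\mathscr{S}(Q)$ is smoothly diffeomorphic to $\Delta\times\C^n\times(\C^n\setminus\{0\})\times\R\times\R^*$ via $(a,v,w,y_0,b)\mapsto\bm{h}$, it suffices to compute the two evaluation maps in these coordinates and check that each has surjective differential at every point. First I would treat $\bm{h}\mapsto h(1)\in Q$. Plugging $\zeta=1$ into the formula for $h$ gives
\begin{equation*}
h(1)=\left(\tsp\bar v Av+\frac{2\tsp\bar v A w}{1-a}+\frac{\tsp\bar w A w}{1-|a|^2}\cdot\frac{1+a}{1-a}+iy_0,\ v+\frac{w}{1-a}\right),
\end{equation*}
a point of $Q$ whose $z_\alpha$-component is $v+w/(1-a)$ and whose $\Im m\, z_0$-component is $y_0$ plus an expression depending on $(a,v,w)$; the $\Re e\, z_0$-component is then forced by the equation $r^A=0$. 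So $Q$ is parametrized by $(z_\alpha,y_0)\in\C^n\times\R$, and in these target coordinates the map reads $(a,v,w,y_0,b)\mapsto\big(v+w/(1-a),\ y_0+\text{(lower terms)}\big)$. Fixing $a$, $w$, $b$ and varying $v$ alone already surjects onto the $z_\alpha$-slot (the map $v\mapsto v+w/(1-a)$ is an affine isomorphism of $\C^n$), while varying $y_0$ alone surjects onto the remaining real coordinate (the derivative in $y_0$ of the $\Im m\, z_0$-component is $1$, and $y_0$ does not enter the $z_\alpha$-component at all). Hence the differential of $\bm{h}\mapsto h(1)$ is already surjective when restricted to the $(v,y_0)$-directions, so the map is a submersion onto $Q$.

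Next I would do $\bm{h}\mapsto\bm{h}(1)=(h(1),h^*(1))\in N^*Q$. The fiber $N^*_z Q$ over a point $z\in Q$ is the real line $\R\,\partial r^A_z$, so $N^*Q$ is an $\R$-bundle of rank $1$ over $Q$, of real dimension $2n+2+1=2n+3$; a convenient global coordinate is $(z,t)\mapsto t\,\partial r^A_z$ with $t\in\R^*$ (away from the zero section). From the explicit formula, $\zeta h^*(\zeta)=b(\zeta-\bar a)(1-a\zeta)\cdot(1/2,-\tsp\overline{h_\alpha(\zeta)}A)$, and since $\partial r^A_z=(1/2,-\tsp\bar z_\alpha A)$, evaluating at $\zeta=1$ gives
\begin{equation*}
h^*(1)=b(1-\bar a)(1-a)\cdot(1/2,-\tsp\overline{h_\alpha(1)}A)=b\,|1-a|^2\,\partial r^A_{h(1)}.
\end{equation*}
Thus in the coordinates $(z,t)$ the map $\bm{h}\mapsto\bm{h}(1)$ is $(a,v,w,y_0,b)\mapsto\big(h(1),\ b\,|1-a|^2\big)$. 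Composing with the projection $N^*Q\to Q$ recovers the first submersion, so the differential surjects onto all the $Q$-directions via the $(v,y_0)$-block as before; and the remaining fiber coordinate $t=b\,|1-a|^2$ has derivative $|1-a|^2\neq 0$ in the $b$-direction, which moreover does not affect $h(1)$. Therefore the full differential is surjective onto $T N^*Q$, and $\bm{h}\mapsto\bm{h}(1)$ is a submersion as well.

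I do not anticipate a genuine obstacle here: once the parametrization is in hand, both statements reduce to reading off a lower-triangular block structure in the Jacobian (the $z_\alpha$-slot controlled by $v$, the $\Im m\, z_0$-slot by $y_0$, the conormal direction by $b$) and observing that the relevant diagonal blocks are invertible, uniformly on $\Delta\times\C^n\times(\C^n\setminus\{0\})\times\R\times\R^*$. The only points deserving a word of care are the identification of $N^*Q$ with $Q\times\R^*$ via $\partial r^A$ — so that ``$\bm{h}(1)\in N^*Q$'' is an honest submersion statement about a $(2n+3)$-dimensional manifold — and the cancellation $h^*(1)=b|1-a|^2\partial r^A_{h(1)}$, which is exactly what makes the lift land in the conormal bundle and exhibits the fiber coordinate cleanly as $b|1-a|^2$.
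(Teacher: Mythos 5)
Your argument is correct and is essentially the paper's own proof: both use the explicit parametrization by $(a,v,w,y_0,b)$ and exhibit the same triangular structure ($v$ hits the $z_\alpha$-directions, $y_0$ the remaining $\Im m\,z_0$-direction of $Q$, and $b$ the conormal fiber via $h^*(1)=b|1-a|^2\,\partial r^A_{h(1)}$, exactly the paper's choice $b'=\tfrac{2}{|1-a|^2}\tau_0$). One harmless slip: $Q$ has real dimension $2n+1$, so $N^*Q$ has dimension $2n+2$ (as stated in the paper), not $2n+3$; this does not affect the argument.
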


\begin{proof}
According to the previous parametrization of $\mathscr{S}(Q)$, 
$$h(1)=\left(\tsp\bar{v}Av+2\tsp\bar{v}Aw\,\frac{1}{1-a}+\frac{\tsp\bar{w}Aw}{1-|a|^2}\,\frac{1+a}{1-a}+iy_0,v+w\,\frac{1}{1-a}\right)$$
$$h^*(1)=b|1-a|^2\times(1/2,-\tsp\overline{h_\alpha(1)}A)$$
Assume $(a,v,w,y_0,b)\in\Delta\times\C^n\times(\C^n\setminus\{0\})\times\R\times\R^*$, and let $\bm{h}=(h,\zeta h^*)$ be the corresponding element in $\mathscr{S}(Q)$. 
Notice that 
\begin{equation}\label{equationConormal}
(z,t)\in N^*Q\ \Longleftrightarrow\ z\in Q,\ t_0\in\R,\ t_\alpha+2t_0\tsp\overline{z}_\alpha A=0
\end{equation}
hence
$$(\mu,\tau)\in T_{(z,t)}(N^*Q)\ \Longleftrightarrow\ 
\left\{\begin{array}{l}
\mu\in T_zQ\\
\tau_0\in\R\\
\tau_\alpha+2\tau_0\tsp\overline{z}_\alpha A+2t_0\tsp\overline{\mu_\alpha}A=0
\end{array}\right.$$

We first  consider the map
$\phi^1:(a,v,w,y_0,b)\mapsto h(1)\in Q$.
For every $\mu\in T_{h(1)}Q$, that is such that $\Re e\,\mu_0=\tsp\overline{h_\alpha(1)}A\mu_\alpha+\tsp\overline{\mu_\alpha}Ah_\alpha(1)$, we are looking for some
$(a',v',w',y'_0,b')\in\C\times\C^n\times\C^n\times\R\times\R$ such that $d\phi^1_{(a,v,w,y_0,b)}(a',v',w',y'_0,b')=\mu$.
The $n$ last components of this equality give the equation
$$v'+\frac{1}{1-a}w'+w\frac{1}{(1-a)^2}a'=\mu_\alpha$$
and we can choose $v'=\mu_\alpha$, $w'=0$ and $a'=0$. Since by construction $d\phi^1_{(a,v,w,y_0,b)}(a',v',w',y'_0,b') \in T_{h(1)}Q$, the first component applied to these values only corresponds to the equation
$$\Im m\left(2\tsp\overline{v'}Aw\frac{1}{1-a}\right)+y'_0=\Im m\,\mu_0$$
thus we can find a convenient $y'_0$.

Consider now the map
$\phi^2:(a,v,w,y_0,b)\mapsto h^*(1)$. Suppose $(\mu,\tau)\in T_{(h(1),h^*(1))}(N^*Q)$ and choose $v'=\mu_\alpha$, $w'=0$, $a'=0$ and $y'_0$ the convenient value found previously.
 We are looking for some $b'\in\R$ such that $d\phi^2_{(a,v,w,y_0,b)}(0,\mu_\alpha,0,y'_0,b')=(\mu, \tau)$. Then $b'=\frac{2}{|1-a|^2}\,\tau_0$ is convenient.
\end{proof}

The condition $h(1)=0,\ h^*(1)=(1,0,\hdots,0)$ is
equivalent to
$$\left\{\begin{array}{l}
w=-(1-a)v\\
iy_0=-\frac{a-\bar{a}}{1-|a|^2}\tsp\bar{v}Av\\
\frac{b|1-a|^2}{2}=1
\end{array}\right.$$
and a straightforward computation gives that the elements of $\mathscr{S}^*(Q)$ are exactly of the following form: 
\begin{eqnarray}
h(\zeta)=\frac{1-\zeta}{1-a\zeta}\left(2\frac{1-a}{1-|a|^2}\tsp\bar{v}Av,v\right)=\frac{1-\zeta}{1-a\zeta}h(0)\\
\zeta
h^*(\zeta)=\frac{2}{|1-a|^2}\,\left(\frac{(\zeta-\overline{a})(1-a\zeta)}{2},(1-\zeta)(1-a\zeta)\tsp\bar{v}A\right)
\label{lift}
\end{eqnarray}
where $a\in\Delta$ and $v\in\C^n\setminus\{0\}$.

\subsection{Parametrization}

\begin{prop}\label{2jet-discs}\ 
Let $Q$ be a non-degenerate hyperquadric in $\C^{n+1}$. Then 
$\mathscr{S}^*(Q)$ is a (2n+2)-real parameter family. Moreover:
\begin{enumerate}[i)]
\item the elements $\bm{h}=(h,g)\in\mathscr{S}^*(Q)$ are smooth up to the boundary;
\item the map $\bm{h}\mapsto h(0)$ is a smooth diffeomorphism from $\{\bm{h}=(h,g)\in\mathscr{S}^*(Q)\
  |\ \tsp\overline{h_\alpha(0)}Ah_\alpha(0)\not=0\}$ onto the open set
  $\{(\gamma\tsp\bar{v}Av,v)\ |\ v\in\C^n,\ \tsp\bar{v}Av\not=0,\ \Re e(\gamma)>1\}$;
\item the map $\bm{h}\mapsto(h_\alpha'(1),h'_0(1)g'_0(1))$ is a smooth diffeomorphism from 
$\{\bm{h}\in\mathscr{S}^*(Q)\ |\ \tsp\overline{h_\alpha(0)}Ah_\alpha(0)\not=0\}$ onto its image.
\end{enumerate}
\end{prop}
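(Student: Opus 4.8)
The plan is to derive everything from the explicit description of $\mathscr{S}^*(Q)$ obtained just above, namely that its elements are precisely the $\bm{h}=(h,g)$ with
$$h(\zeta)=\frac{1-\zeta}{1-a\zeta}\,h(0),\qquad h(0)=\Bigl(\tfrac{2(1-a)}{1-|a|^2}\,\tsp\overline{v}Av,\ v\Bigr),\qquad g(\zeta)=\frac{2}{|1-a|^2}\Bigl(\tfrac{(\zeta-\overline{a})(1-a\zeta)}{2},\ (1-\zeta)(1-a\zeta)\,\tsp\overline{v}A\Bigr),$$
parametrized by $(a,v)\in\Delta\times(\C^n\setminus\{0\})$. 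This identification already shows $\mathscr{S}^*(Q)$ is a $(2n+2)$-real parameter family, and since $|a|<1$ the only possible pole $\zeta=1/a$ of these rational functions lies outside $\overline{\Delta}$, so every $\bm{h}\in\mathscr{S}^*(Q)$ is real-analytic, in particular smooth, up to $\partial\Delta$; this is (i). Throughout I will write $\lambda=\tsp\overline{v}Av$ and $\gamma(a)=\tfrac{2(1-a)}{1-|a|^2}$; note $\lambda\in\R$ because $A$ is Hermitian, and $h(0)=(\gamma(a)\lambda,v)$, so the condition $\tsp\overline{h_\alpha(0)}Ah_\alpha(0)\neq 0$ means exactly $\lambda\neq 0$.

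For (ii), the key point I would establish is that $a\mapsto\gamma(a)$ is a smooth diffeomorphism from $\Delta$ onto the half-plane $\{\Re e\,\gamma>1\}$: that the image lies there reduces to the inequality $(1-\Re e\,a)^2+(\Im m\,a)^2>0$, valid on $\Delta$, and an explicit smooth inverse is $\gamma\mapsto a=1-\tfrac{2(\Re e\,\gamma-1)}{|\gamma|^2}\,\gamma$ — setting $s=\tfrac{2(\Re e\,\gamma-1)}{|\gamma|^2}\in(0,\tfrac12]$ one checks directly that $1-a=s\gamma$ and $1-|a|^2=2s$, whence $\gamma(a)=\gamma$. Consequently $(a,v)\mapsto(\gamma(a),v)$ is a diffeomorphism of $\Delta\times\{\lambda\neq 0\}$ onto $\{\Re e\,\gamma>1\}\times\{\lambda\neq 0\}$; composing with $(\gamma,v)\mapsto(\gamma\lambda,v)$, which for fixed $v$ is the $\R$-linear isomorphism $\gamma\mapsto\gamma\lambda$ of $\C$ (as $\lambda\in\R^*$), yields a diffeomorphism onto $\{(w_0,v):\lambda\neq 0,\ \Re e(w_0)/\lambda>1\}=\{(\gamma\,\tsp\overline{v}Av,v):\tsp\overline{v}Av\neq 0,\ \Re e\,\gamma>1\}$, which is visibly open. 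Since $\bm{h}\mapsto h(0)$ is precisely this composition, (ii) follows.

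For (iii), differentiating the formulas above gives $h_\alpha'(1)=-\tfrac{v}{1-a}$, $h_0'(1)=\tfrac{-2\lambda}{1-|a|^2}$ and $g_0'(1)=\tfrac{1-2a+|a|^2}{|1-a|^2}$. Put $w=h_\alpha'(1)$ and $\mu=\tsp\overline{w}Aw\in\R$, so $v=-(1-a)w$, $\lambda=|1-a|^2\mu$, and using the identity $\tfrac{1-2a+|a|^2}{1-|a|^2}=\gamma(a)-1$ one obtains $h_0'(1)g_0'(1)=-2\mu\,(\gamma(a)-1)$. Thus, with $\delta=\gamma(a)-1$, the map of (iii) factors as $(a,v)\mapsto(a,w)\mapsto(\delta,w)\mapsto\bigl(w,-2\mu(w)\delta\bigr)$: the first arrow is a diffeomorphism of $\Delta\times(\C^n\setminus\{0\})$ onto itself carrying $\{\lambda\neq 0\}$ to $\{\mu\neq 0\}$ (inverse $(a,w)\mapsto(a,-(1-a)w)$), the second is a diffeomorphism onto $\{\Re e\,\delta>0\}\times\{\mu\neq 0\}$ by the computation in (ii), and the third, being for each fixed $w$ the $\R$-linear isomorphism $\delta\mapsto-2\mu(w)\delta$ of $\C$ together with the identity in $w$, is a diffeomorphism onto its image. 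Hence $\bm{h}\mapsto(h_\alpha'(1),h_0'(1)g_0'(1))$ is a smooth diffeomorphism onto its image.

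All the computations are elementary; the two points I expect to require a little care, and on which the argument hinges, are the explicit inversion of the Cayley-type map $a\mapsto\tfrac{2(1-a)}{1-|a|^2}$ onto $\{\Re e>1\}$, and the simplification $h_0'(1)g_0'(1)=-2\bigl(\tsp\overline{h_\alpha'(1)}Ah_\alpha'(1)\bigr)(\gamma(a)-1)$, which is exactly what makes the map of (iii) factor through the diffeomorphism of (ii) rather than through some messier object.
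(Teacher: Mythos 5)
Your proof is correct and follows essentially the same route as the paper: both rest on the explicit parametrization of $\mathscr{S}^*(Q)$ by $(a,v)\in\Delta\times(\C^n\setminus\{0\})$, the fact that $a\mapsto\frac{2(1-a)}{1-|a|^2}$ is a diffeomorphism of $\Delta$ onto $\{\Re e>1\}$ (your inverse formula is the paper's $\zeta\mapsto\frac{2\zeta-\zeta^2}{|\zeta|^2}$ in disguise), and the reduction of the map in (iii) to $(a,v)\mapsto\bigl(\frac{v}{1-a},\gamma(a)-1\bigr)$ via the identity $h_0'(1)g_0'(1)=-2\,\tsp\overline{h_\alpha'(1)}Ah_\alpha'(1)\,(\gamma(a)-1)$. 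Your version is if anything slightly more explicit (the factorization of (iii) into three diffeomorphisms replaces the paper's appeal to the inverse function theorem for a one-to-one immersion), and all computations check out.
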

\noindent Item $iii)$ implies that the map $\bm{h}\mapsto\bm{h}'(1)$ defined on $\{\bm{h}\in\mathscr{S}^*(Q)\ |\ \tsp\overline{h_\alpha(0)}Ah_\alpha(0)\not=0\}$ is one-to-one.

\begin{remark}
We could also check that the map $\bm{h}\mapsto (h_\alpha'(1),\tsp\overline{h_\alpha'(1)}h_\alpha''(1))$ defined on $\mathscr{S}^*(Q)$ is a smooth diffeomorphism onto its image, hence non-constant stationary discs $h$ glued to $Q$ such that $h(1)=0$ are uniquely determined by their 2-jet at point 1.
\end{remark}

\begin{proof}
According to Corollary \ref{corsub}, $\mathscr{S}^*(Q)$ is a submanifold of real dimension $(4n+4)-(2n+2)$ of $\left(\mathcal{C}^{1,\epsilon}(\partial\Delta)\right)^{2n+2}$, given by the one-to-one parametrization $\Delta\times(\C^n\setminus\{0\})\ni (a,v)\mapsto \bm{h}$.

Using this parametrization of $\mathscr{S}^*(Q)$, we are able to localize the centers of such discs by looking at the map
$(a,v)\mapsto\left(2\frac{1-a}{1-|a|^2}\tsp\bar{v}Av,v\right)$.
The function $a\mapsto \frac{2(1-a)}{1-|a|^2}$ is a bijection from the unit disc $\Delta$ onto the half plane 
$\{\Re e \zeta> 1 \}$ whose inverse is given by $\zeta\mapsto \frac{2\zeta-\zeta^2}{|\zeta|^2}$.  
Since we consider discs such that the parameter $v=h_\alpha(0)$ satisfies $\tsp\bar{v}Av\neq 0$, this implies that the map 
is a smooth diffeomorphism.
As a direct consequence, we obtain $(ii)$.

To prove $(iii)$, let $\bm{h}=(h,g)\in\mathscr{S}^*(Q)$. We have 
$$h'(1)= \frac{-1}{1-a}\left(\frac{2(1-a)}{1-|a|^2}\tsp\overline{v}Av,v\right)=\left(\frac{-2}{1-|a|^2}\tsp\overline{v}Av,\frac{-1}{1-a}v\right),$$
$$g'(1)=\frac{2}{|1-a|^2}\left(\frac{1+|a|^2-2a}{2},(a-1)\tsp\overline{v}A\right)=\left(\frac{1+|a|^2-2a}{|1-a|^2},\frac{-2}{1-\overline{a}}\tsp\overline{v}A\right).$$
We have to consider the map
$\bm{h}\mapsto(h_\alpha'(1),h'_0(1)g'_0(1))$. Since $h_\alpha'(1)=\frac{-1}{1-a}h_\alpha(0)$ and we assume 
$\tsp\overline{h_\alpha(0)}Ah_\alpha(0)\not=0$, it is equivalent to look at
$$\bm{h}\mapsto\left(-h_\alpha'(1),\frac{-1}{2\tsp\overline{h_\alpha'(1)}Ah_\alpha'(1)}\,h'_0(1)g'_0(1)\right).$$
Using the parametrization of $h\in\mathscr{S}^*(Q)$ by $(a,v)$, 
we set for any $(a,v)\in \Delta\times(\C^n\setminus\{0\})$ such that $\tsp\overline{v}Av\not=0$,
$$\psi(a,v)=\left(\frac{1}{1-a}v,\frac{1+|a|^2-2a}{1-|a|^2}\right)\in\C^{n+1}.$$
Note that $\psi(a,v)=\left(\frac{1}{1-a}v,-1+\frac{2(1-a)}{1-|a|^2}\right)$: once more, the properties of the function $a\mapsto \frac{2(1-a)}{1-|a|^2}$ prove that $\psi$ is a smooth, one-to-one immersion. We conclude the proof by using of the inverse function theorem.

\end{proof}

Let us see how it leads to the
unique determination of $F$ by its 2-jet at the origin. Assume $F$ is a
biholomorphism of $\C^{n+1}$ such that $F(0)=0$ and $F(Q)\subset Q$, with the
same 2-jet than the identity. Let $z\in \Omega:=\{(\gamma\tsp\bar{v}Av,v)\ |\ v\in\C^n,\
\tsp\bar{v}Av\not=0,\ \Re e(\gamma)>1\}$: by $(ii)$ there exists a unique 
$\bm{h}=(h,g)\in\mathscr{S}^*(Q)$ such that $h(0)=z$. Since $F(0)=0$ and $dF_0=\mathrm{id}$, the disc $F_*\bm{h}$ is still in $\mathscr{S}^*(Q)$. Moreover $(F_*\bm{h})'(1)=\bm{h}'(1)$, so by $(iii)$
we get $F_*\bm{h}=\bm{h}$ and hence $F\circ h=h$, and $F(z)=z$. This is true for any $z$ in the open
set $\Omega$ so $F$ is equal to the identity. This can be resumed in the
following commuting diagram:

$$\begin{array}{cccc}
F:  &h(0)&\mapsto&F\circ h(0)\\
  &\updownarrow& \circlearrowleft&\updownarrow\\
 j_0^1(F,\tsp dF^{-1}): &\bm{h}'(1) & \mapsto& (F_*\bm{h})'(1). 
\end{array}$$
where $j_0^1(F,\tsp dF^{-1})$ denotes the $1$-jet at $0$ of $(F,\tsp dF^{-1})$. 
\begin{remark}\label{cote}
Note that if $Q$ is strictly pseudoconvex (\textsl{i.e.} the Hermitian matrix $A$ is positive definite), then non-constant discs $h$ glued to $Q$ remain on the same side of the hyperquadric. Indeed,  the map $-r\circ h$ is subharmonic and one can conclude by using the maximum principle.
Hence in this case the previous diagram is still valid if the map $(F,\tsp dF^{-1})$ is only defined on one side of $Q$, and of class $\mathcal{C}^1$ up to the boundary.
\end{remark}

\section{Discs attached to a perturbation of $\mathscr{N}Q$}

The aim of this section is to generalize Proposition \ref{2jet-discs} to the case of a small perturbation of the hyperquadric $Q$. 
The method consists in using a theorem of J. Globevnik \cite{Glob}:
 given a totally real fibration $\mathscr{E}$ and a disc $f$ attached to $\mathscr{E}$, and under the conditions that some integers depending on $\mathscr{E}$ and $f$ are non-negative, the holomorphic discs near $f$ attached to a small perturbation of $\mathscr{E}$ form a $\kappa$-parameter family, where $\kappa$ is the Maslov index of $\mathscr{E}$ along $f$. 
In our situation, 
 we choose $\mathscr{N}Q$ as the totally real fibration and we fix 
a disc $\bm{h} \in \mathscr{S}^*(Q)$. This will give a local description of $\mathscr{S}^*(\Gamma)$ for a small perturbation $\Gamma$   of $Q$.

\subsection{The result of J. Globevnik\label{sectionGlob}}
Let $0<\epsilon<1$. Consider the following situation:
\begin{itemize}
\item $\B\subset \C^{N}$ is an open ball centered at the origin and $\tilde{r}_1,\hdots,\tilde{r}_N$ are in $\mathcal{C}^{1,\epsilon}(\partial\Delta,\mathcal{C}^3(\B,\R))$
\item $f$ is a map of class $\mathcal{C}^{1,\epsilon}$ from $\partial\Delta$ to $\B$
\item for every $\zeta\in\partial\Delta$,
\begin{enumerate}[i)]
\item $\mathscr{E}(\zeta):=\{\omega\in\B | \tilde{r}_j(\zeta)(\omega)=0,\ 1\le j\le N\}$ is a maximal totally real submanifold in $\C^N$,
\item $f(\zeta)\in\mathscr{E}(\zeta)$,
\item $\partial _\omega\tilde{r}_1 \wedge \hdots \wedge \partial_\omega \tilde{r}_N$ does not vanish on $\partial\Delta\times\B$.
\end{enumerate}
\end{itemize}

\noindent Such a family $\mathscr{E}:=\{\mathscr{E}(\zeta)\}$ of manifolds is called a {\it totally real fibration} over $\partial \Delta$. A disc glued to a fixed totally real manifold ($\mathscr{E}$ is independent of $\zeta$) is a special case of a totally real fibration.
 
\bigskip

Denote by  $GL_N(\C)$ the group of all invertible $(N\times N)$ matrices with complex entries. Let $\zeta\in\partial \Delta$ and consider the 
matrix $G(\zeta):=\displaystyle \left(\frac{\partial \tilde{r}_i}{\partial\bar{z}_j}(f(\zeta))\right)_{i,j} \in GL_N(\C)$. For any $(N\times N)$ matrix $A(\zeta)$ 
whose columns span $T(\zeta):=T_{f(\zeta)}(\mathscr{E}(\zeta))$, any row of $G(\zeta)$ is orthogonal to any column of $A(\zeta)$:
$$\Re e(\overline{G(\zeta)}A(\zeta)=0\Longleftrightarrow G(\zeta)\overline{A(\zeta)}=-\overline{G(\zeta)}A(\zeta)\Longrightarrow A(\zeta)
\overline{A(\zeta)}^{-1}=-\overline{G(\zeta)}^{-1}G(\zeta).$$
Set $B(\zeta)=A(\zeta)\overline{A(\zeta)}^{-1}=-\overline{G(\zeta)}^{-1}G(\zeta)$ for all $\zeta\in\partial \Delta$. Hence the matrix $B(\zeta)$ depends only on $T(\zeta)$ and not on a particular choice of defining functions. 
Moreover, one can find a Birkhoff factorization of $B$ (see \cite{Birkhoff}), \textsl{i.e.} some continuous  functions $B^+:\bar{\Delta}\to GL_N(\C)$ and 
$B^-:(\C \cup \infty)\setminus\Delta\to GL_N(\C)$  such that 
$$\forall \zeta\in\partial \Delta,\ B(\zeta)=B^+(\zeta)\left(\begin{array}{ccc}\zeta^{\kappa_1}& &(0) \\ &\ddots& \\ (0)& &\zeta^{\kappa_{N}}\end{array}\right)B^-(\zeta)\,$$
where $B^+$ and $B^-$ are holomorphic on $\Delta$ and $\C \setminus \overline{\Delta}$ respectively.
The integers $\kappa_1\ge\hdots\ge\kappa_N$ do not depend on this factorization. They are called the {\em partial indices} of $B$ (see \cite{Vekua,CG} for more details) or the {\em partial indices of $\mathscr{E}$ along $f$}. 
The {\it Maslov index of $\mathscr{E}$ 
along $f$} is the sum $\sum_{1}^N\kappa_j$.

The following result was stated in \cite{Glob} for a fibration given by equations in 
$\mathcal{C}^{\epsilon}(\partial\Delta,\mathcal{C}^2(\B)^N)$, but the arguments remain valid for $\mathcal{C}^{1,\epsilon}(\partial\Delta,\mathcal{C}^3(\B)^N)$ (the crucial point is to get Lemma 11.2 with $\mathcal{C}^{1,\epsilon}$ instead of $\mathcal{C}^{\epsilon}$ which requires to increase the regularity of the equations).

\begin{theo}{\bf (\cite{Glob}, Theorem 7.1)}\label{theo:Globev} 
Assume that the previous conditions hold. For every $\tilde{\rho}=(\tilde{\rho}_1,\hdots,\tilde{\rho}_N)\in\mathcal{C}^{1,\epsilon}(\partial\Delta,\mathcal{C}^3(\B)^N)$ in a neighborhood of $\tilde{r}=(\tilde{r}_1,\hdots,\tilde{r}_N)$, we set for all $\zeta\in\partial\Delta$
$$\mathscr{E}_{\tilde{\rho}}(\zeta):=\{\omega\in\B | \tilde{\rho}_j(\zeta)(\omega)=0,\ 1\le j\le N\}.$$
Assume that the partial indices of $\mathscr{E}=\mathscr{E}_{\tilde{r}}$ along $f$ are non-negative, and denote by $\kappa$ the Maslov index of $\mathscr{E}$ along $f$. Then, there exist some open neighborhoods $V$ of $\tilde{r}$ in $\mathcal{C}^{1,\epsilon}(\partial\Delta,\mathcal{C}^3(\B)^N)$, $U$ of the origin in 
$\R^{\kappa+N}$, $W$ of $f$ in $\mathcal{C}^{1,\epsilon}(\partial\Delta,\B)$, and a map $\tilde{\mathcal{F}}:V\times U\to\mathcal{C}^{1,\epsilon}(\partial\Delta,\B)$ of class $\mathcal{C}^1$
such that
\begin{enumerate}[i)]
\item $\tilde{\mathcal{F}}(\tilde{r},0)=f$,
\item for all $(\tilde{\rho},t)\in V\times U$, the map $\zeta\mapsto\mathcal{F}(\tilde{\rho},t)(\zeta)-f(\zeta)$ is the boundary of a holomorphic disc attached to the totally real fibration $\mathscr{E}_{\tilde{\rho}}$,
\item  
$\forall\tilde{\rho}\in V$,
the map $\tilde{\mathcal{F}}(\tilde{\rho},.)$ is one-to-one,
\item if $g\in W$ satisfies $g(\zeta)\in\mathscr{E}_{\tilde{\rho}}(\zeta)$ on $\partial\Delta$ for some $\tilde{\rho}\in V$ and is such that $g-f$ extends holomorphically to $\Delta$, then there exists $t\in U$ such that $g=\tilde{\mathcal{F}}(\tilde{\rho},t)$.
\end{enumerate}
\end{theo}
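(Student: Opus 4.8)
The plan is to present the holomorphic discs attached to $\mathscr{E}_{\tilde\rho}$ near $f$ as the zero set of a nonlinear map between H\"older--Banach spaces over $\partial\Delta$, and to extract the parametrization from the implicit function theorem, the dimension $\kappa+N$ and the needed surjectivity coming from classical Riemann--Hilbert theory through a Birkhoff factorization. Concretely, I would write a candidate disc as $F=f+w$ with $w$ a small holomorphic disc in the closed subspace $X$ of $\mathcal{C}^{1,\epsilon}(\overline\Delta,\C^N)$ consisting of holomorphic maps, and encode the gluing condition $F(\zeta)\in\mathscr{E}_{\tilde\rho}(\zeta)$ as the vanishing of
$$\Psi(\tilde\rho,w)(\zeta):=\big(\tilde\rho_j(\zeta)(f(\zeta)+w(\zeta))\big)_{1\le j\le N},$$
a map from a neighbourhood of $(\tilde r,0)$ in $\mathcal{C}^{1,\epsilon}(\partial\Delta,\mathcal{C}^3(\B)^N)\times X$ to $\mathcal{C}^{1,\epsilon}(\partial\Delta,\R^N)$. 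One first checks that $\Psi$ is well defined and of class $\mathcal{C}^1$; this is precisely the point where one needs $\tilde\rho$ of class $\mathcal{C}^{1,\epsilon}$ in $\zeta$ with $\mathcal{C}^3$ dependence on $\omega\in\B$, so that composing with $f+w\in\mathcal{C}^{1,\epsilon}$ stays $\mathcal{C}^{1,\epsilon}$ in $\zeta$ and remains differentiable in $(\tilde\rho,w)$, which is the mild strengthening of Globevnik's original $\mathcal{C}^{\epsilon}/\mathcal{C}^2$ framework.

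Next I would differentiate $\Psi$ in $w$ at $(\tilde r,0)$; using $f(\zeta)\in\mathscr{E}(\zeta)$ and the reality of the $\tilde r_j$, one gets
$$D_w\Psi(\tilde r,0)\,\dot w(\zeta)=2\,\Re e\big(\overline{G(\zeta)}\,\dot w(\zeta)\big),\qquad G(\zeta)=\Big(\tfrac{\partial\tilde r_i}{\partial\bar z_j}(f(\zeta))\Big)_{i,j}.$$
Hence $\ker D_w\Psi(\tilde r,0)$ is exactly the space of holomorphic discs $\dot w$ with $\dot w(\zeta)\in T_{f(\zeta)}\mathscr{E}(\zeta)$ for $\zeta\in\partial\Delta$, i.e.\ the solution space of the linear Riemann--Hilbert problem with coefficient $B(\zeta)=-\overline{G(\zeta)}^{-1}G(\zeta)$ introduced above. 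A Birkhoff factorization $B=B^+\,\mathrm{diag}(\zeta^{\kappa_1},\dots,\zeta^{\kappa_N})\,B^-$ together with the holomorphic changes of variable it provides reduces the problem to $N$ scalar Riemann--Hilbert problems of indices $\kappa_1,\dots,\kappa_N$. Since each $\kappa_j\ge0$, each scalar homogeneous problem has real solution space of dimension $\kappa_j+1$ and each scalar inhomogeneous problem is solvable; it follows that $D_w\Psi(\tilde r,0)$ is surjective onto $\mathcal{C}^{1,\epsilon}(\partial\Delta,\R^N)$ and that $\dim_\R\ker D_w\Psi(\tilde r,0)=\sum_{j=1}^N(\kappa_j+1)=\kappa+N$, so $D_w\Psi(\tilde r,0)$ is Fredholm of index $\kappa+N$.

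With this in hand, fix a closed complement $X_1$ of $\ker D_w\Psi(\tilde r,0)$ in $X$, so that $D_w\Psi(\tilde r,0)|_{X_1}$ is a Banach-space isomorphism onto $\mathcal{C}^{1,\epsilon}(\partial\Delta,\R^N)$. The implicit function theorem applied to $\Psi=0$ in the $X_1$-variable, keeping $\tilde\rho$ and the kernel coordinate $t\in\R^{\kappa+N}$ free, yields neighbourhoods $V$ of $\tilde r$, $U$ of $0$, $W$ of $f$ and a $\mathcal{C}^1$ map $(\tilde\rho,t)\mapsto w(\tilde\rho,t)\in X$ with $w(\tilde r,0)=0$ and $\Psi(\tilde\rho,w(\tilde\rho,t))\equiv0$; putting $\tilde{\mathcal{F}}(\tilde\rho,t):=f+w(\tilde\rho,t)$ gives i) and ii) at once. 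For iii), since $D_w\Psi(\tilde r,0)$ vanishes on its kernel, the partial derivative of $w$ in the kernel direction vanishes at $(\tilde r,0)$, hence $t\mapsto\tilde{\mathcal{F}}(\tilde\rho,t)$ is an immersion near $0$ with derivative close to the fixed injective inclusion $\R^{\kappa+N}\hookrightarrow X$, so after shrinking $U$ it is one-to-one. For iv), any $g\in W$ with $g(\zeta)\in\mathscr{E}_{\tilde\rho}(\zeta)$ and $g-f$ holomorphic satisfies $\Psi(\tilde\rho,g-f)=0$ near $(\tilde r,0)$, and since the implicit function theorem exhausts the local zero set of $\Psi$, one gets $g=\tilde{\mathcal{F}}(\tilde\rho,t)$ for some $t\in U$.

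The main obstacle is the analytic core of the second step: proving that the linear Riemann--Hilbert problem with coefficient matrix $B$ admits a \emph{bounded} solution operator on $\mathcal{C}^{1,\epsilon}(\partial\Delta)$, and computing its kernel dimension and image in terms of the partial indices -- the $\mathcal{C}^{1,\epsilon}$-analogue of Lemma 11.2 of \cite{Glob}. This requires carrying out the Birkhoff factorization with enough regularity that $B^{\pm}$ and their inverses act boundedly on $\mathcal{C}^{1,\epsilon}$, and controlling the Cauchy transform and the harmonic conjugation operator in H\"older norm with one more derivative than in the $\mathcal{C}^{\epsilon}$ setting; it is exactly this gain that forces the hypothesis $\tilde r\in\mathcal{C}^{1,\epsilon}(\partial\Delta,\mathcal{C}^3(\B)^N)$, and is the modification of Globevnik's argument alluded to before the statement. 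Once this linear theory is in place, the regularity of $\Psi$, the index bookkeeping, and the implicit function theorem are comparatively routine.
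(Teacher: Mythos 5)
Your proposal is correct and follows essentially the same route as the paper, which does not reprove this result but cites Globevnik's Theorem 7.1 and merely notes that his implicit-function-theorem/Riemann--Hilbert argument carries over to the $\mathcal{C}^{1,\epsilon}$ setting: your reconstruction (linearization $u\mapsto 2\Re e(\overline{G}u)$, Birkhoff factorization, kernel dimension $\kappa+N$ from the nonnegative partial indices, implicit function theorem with the kernel as the parameter space) is exactly that argument. You also isolate the same crucial point the authors flag, namely the bounded $\mathcal{C}^{1,\epsilon}$ solution operator for the linear Riemann--Hilbert problem (the $\mathcal{C}^{1,\epsilon}$ analogue of Globevnik's Lemma 11.2), which is precisely why the equations are taken in $\mathcal{C}^{1,\epsilon}(\partial\Delta,\mathcal{C}^3(\B)^N)$.
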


Notice that if $f$ is the boundary map of a holomorphic disc, then this theorem describes all nearby discs attached to $\mathscr{E}_{\tilde{\rho}}$ for some $\tilde{\rho}$ close to $\tilde{r}$.

\subsection{Discs glued to a small perturbation of $\mathscr{N}Q$ \label{eqconormal}}
Let $Q$ be the hyperquadric in $\C^{n+1}$ defined  by 
$$r(z)=\Re e z_0-\tsp \overline{z}_\alpha Az_\alpha$$
for some invertible Hermitian matrix $A$, and fix $\bm{h}\in\mathscr{S}^*(Q)$.
For all $\zeta \in \partial \Delta$ we have $\bm{h}(\zeta)\in\mathscr{N}Q(\zeta)$.
Hence $\bm{h}_{|\partial\Delta}$ is the boundary map of a disc attached to the totally real fibration $\mathscr{N}Q$.
 
Moreover, $(z,w) \in \mathscr{N}Q(\zeta)$ if and only if $r(z)=0$ and $\zeta^{-1}w \in {\rm span}_{\R}\{\partial r_z\}$, which gives the equation (\ref{equationConormal}). 
Separating real and imaginary parts, we obtain $2n+2$ equations for $\mathscr{N}Q(\zeta)$:
  
\begin{equation}\label{eqr}
\left\{
\begin{array}{lll} 

\tilde{r}_0(\zeta)(z,w) & = & \frac{z_0+\overline{z}_0}{2} -\tsp\bar{z}_\alpha Az_{\alpha}  = 0,\\
\\
\tilde{r}_1(\zeta)(z,w) & = & i\frac{w_o}{\zeta}-i\zeta\overline{w}_0 = 0,\\
\\
\tilde{r}_2(\zeta)(z,w) & = & \left(w_1-2w_0\partial_{z_1}r(z)\right) + 
\left(\overline{w_1-2w_0\partial_{z_1}r(z)}\right) = 0,\\
\vdots& \vdots&\vdots\\
\tilde{r}_{n+1}(\zeta)(z,w) & = & \left(w_n-2w_0\partial_{z_n}r(z)\right) + 
\left(\overline{w_n-2w_0\partial_{z_n}r(z)}\right) = 0,\\

\tilde{r}_{n+2}(\zeta)(z,w) & = & i\left(w_1-2w_0\partial_{z_1}r(z)\right) - 
i\left(\overline{w_1-2w_0\partial_{z_1}r(z)}\right) = 0,\\
\vdots& \vdots&\vdots\\
\tilde{r}_{2n+1}(\zeta)(z,w) & = & i\left(w_n-2w_0\partial_{z_n}r(z)\right) - 
i\left(\overline{w_n-2w_0\partial_{z_n}r(z)}\right) = 0,\\
\end{array}
\right.
\end{equation}
where actually only $\tilde{r}_1$ depends on $\zeta$. The $(2(n+1)\times 2(n+1))$ matrix $G(\zeta)$ has the following expression

$$\left(\begin{matrix}

1/2  &   -L_1z_\alpha  &  \hdots  &  -L_nz_\alpha  & 0 &  0  &  \hdots  &   0\\

0  &   0  &  \hdots  &  0  & -i\zeta &  0  &  \hdots  &   0\\

0  &  2w_0a_{1,1}  &  \hdots  &  2w_0a_{n,1}  &  
2L_1z_\alpha  &  1  &     &   \\

\vdots  &  \vdots  &     &  \vdots  &  \vdots  &     &  \ddots  &   \\

0  &  2w_0a_{1,n}  &  \hdots  &  2w_0a_{n,n}  &  
2L_nz_\alpha  &     &     &  1\\

0  &  2iw_0a_{1,1}  &  \hdots  &  2iw_0a_{n,1}  &  
-2iL_1z_\alpha  &  -i  &     &    \\

\vdots  &  \vdots  &     &  \vdots  &  \vdots  &     &  \ddots  &   \\

0  &    2iw_0a_{1,n}  &  \hdots  &  2iw_0a_{n,n}  &  -2iL_nz_\alpha  &     &     &  -i\end{matrix}\right)
$$
where $L_j$ denotes the $j^{th}$ row of the matrix $A=(a_{i,j})_{1\le i,j\le n}$. 
 
Right multiplication by the constant matrix $\left(\begin{array}{ccc}2&0&0 \\0&\tsp A^{-1}&0\\ 0&0&I_{n+1}  \end{array}\right)$ does not change the partial indices, and gives us the matrix 

$$\left(\begin{matrix}

1  &   -z_1   &  \hdots  &  -z_n  & 0 &  0  &  \hdots  &   0\\

0  &   0  &  \hdots  &  0  & -i\zeta &  0  &  \hdots  &   0\\

0  &  2w_0  &     &    &  
2L_1z_\alpha  &  1  &     &   \\

\vdots  &&  \ddots  &     &  \vdots  &     &  \ddots  &   \\

0  &       &     &  2w_0&  
2L_nz_\alpha  &     &     &  1\\

0  &  2iw_0  &     &    &  
-2iL_1z_\alpha  &  -i  &     &    \\

\vdots  &&  \ddots  &     &  \vdots  &       &  \ddots  &   \\

0  &       &     &  2iw_0&  -2iL_nz_\alpha  &     &     &  -i\end{matrix}\right)
$$ 
Permuting the rows leads to 
$$\left(\begin{array}{cccccccccccc}

1 & -z_1 & -z_2 & \hdots & -z_{n-1} & -z_n & 0 & 0 & 0 & \hdots & 0 & 0\\
0 & 2w_0 & 0 & \hdots & 0 & 0 & 2L_1z_\alpha & 1 & 0 & \hdots & 0 & 0 \\
0 & 2iw_0 & 0 & \hdots & 0 & 0 & -2iL_1z_\alpha & -i & 0 & \hdots & 0 & 0 \\

\vdots & & & &  &  &\vdots & & \\

0 & 0 & 0 & \hdots & 0 &  2w_0 & 2L_nz_\alpha & 0 & 0 & \hdots & 0  & 1\\

0 &  0 &  0 & \hdots & 0 & 2iw_0 &  -2iL_nz_\alpha & 0 & 0 & \hdots & 0  & -i\\

0 & 0 & 0 & \hdots & 0 & 0 & -i\zeta & 0 & 0 & \hdots & 0 & 0\\
\end{array}\right)$$ 
and by permuting the columns, we get a triangular by block matrix

$$\left(\begin{array}{cccccccccccc}

1 & -z_1 & 0 & \hdots & 0 & -z_n & 0  & 0\\
0 & 2w_0 & 1 & \hdots & 0 & 0    & 0  & 2L_1z_\alpha \\
0 & 2iw_0 & -i& \hdots & 0 & 0   & 0 & -2iL_1z_\alpha \\

\vdots & & & &  &  &\vdots & & \\

0 & 0 & 0 & \hdots & 0 &  2w_0 & 1 &  2L_nz_\alpha\\

0 &  0 &  0 & \hdots & 0 & 2iw_0 & -i  & -2iL_nz_\alpha\\

0 & 0 & 0 & \hdots & 0 & 0 & 0 & -i\zeta\\
\end{array}\right)$$
with $(z,w)=\bm{h}(\zeta)$, $\zeta\in\partial\Delta$. According to (\ref{lift}), setting $b=\frac{2}{|1-a|^2}$, we have 
$2w_0=b\zeta|1-a\zeta|^2$ with $\zeta\in\partial\Delta$. By multiplying the even columns, except the last one, by 
$\frac{1}{b(1-\bar a \bar \zeta)}$ and the odd columns, except the first one, by $1-\bar a \bar \zeta$, we do not 
change the partial indices and we obtain the following matrix: 
 
$$G_1(\zeta):=\left(\begin{array}{cccccccccccc}
1 & \\
 & P & & (*) \\
 & &  \ddots  \\
& (0) &  & P   \\
 &  & &&  -i\zeta\\
\end{array}\right)$$ 
where $P=\left(\begin{array}{cc}
\zeta(1-a\zeta) & 1-\bar a \bar \zeta\\ 
i\zeta(1-a\zeta) &  -i(1-\bar a \bar \zeta)\\
\end{array}\right)$. So $\overline{P^{-1}}=\left(\begin{array}{cc}
\frac{1}{2\bar \zeta(1-\bar a \bar \zeta)} & \frac{i}{2\bar \zeta(1-\bar a \bar \zeta)} \\ 
  \frac{1}{2(1-a\zeta)} &  \frac{-i}{2(1-a\zeta)} \\
\end{array}\right)$. It follows that we are reduced to compute the partial indices of the matrix

\begin{equation}\label{eqB}
B_1(\zeta):=-\overline{G_1(\zeta)^{-1}}G_1(\zeta)=-\left(\begin{array}{cccccccccccc}
1 & \\
 & \overline{P^{-1}}P & & (*) \\
 & &  \ddots  \\
& (0) &  & \overline{P^{-1}}P   \\
 &  & &&  -\zeta^2\\
\end{array}\right)=
-\left(\begin{array}{cccccccccccc}
1 & \\
 & R & & (*) \\
 & &  \ddots  \\
& (0) &  & R   \\
&  & &&  -\zeta^2\\
\end{array}\right)
\end{equation}
where  
$R=\left(\begin{array}{cc}
0 & \zeta\\ 
\zeta &  0\\
\end{array}\right)$.
We need the following factorization lemma:
\begin{lem}{\bf (\cite{Glob}, Lemma 5.1)}\ 
Let $A:\partial\Delta\to GL_{2n+2}(\C)$ of class $\mathcal{C}^\epsilon$ ($0<\epsilon<1$), and denote by
 $\kappa_1\ge\hdots\ge\kappa_{2n+2}$ the partial indices of the map 
$\zeta \mapsto A(\zeta)\overline{A(\zeta)^{-1}}$. Then there exists a map $\Theta:\bar{\Delta}\to GL_{2n+2}(\C)$ 
of class $\mathcal{C}^\epsilon$, holomorphic on $\Delta$, such that 
$$\forall\zeta\in\partial\Delta,\ \Theta(\zeta)A(\zeta)\overline{A(\zeta)^{-1}}=\left(\begin{array}{ccc}\zeta^{\kappa_1}& &(0) \\ &\ddots& \\ (0)& &\zeta^{\kappa_{2n+2}}\end{array}\right)\overline{\Theta(\zeta)} .$$
\end{lem}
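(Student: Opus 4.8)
The plan is to establish the factorization lemma (Glob, Lemma 5.1) in the form needed for the present setting, namely with H\"older-continuous data of class $\mathcal{C}^\epsilon$ on $\partial\Delta$, and to do so constructively via a Birkhoff factorization. First I would set $B(\zeta):=A(\zeta)\overline{A(\zeta)^{-1}}$ and observe the symmetry relation $B(\zeta)\overline{B(\zeta)}=I$ for all $\zeta\in\partial\Delta$; this is the algebraic identity that will force the Birkhoff factors of $B$ on the two sides of $\partial\Delta$ to be conjugate to one another, which is precisely what the conclusion encodes. Concretely, taking a Birkhoff factorization $B(\zeta)=B^+(\zeta)\,\Lambda(\zeta)\,B^-(\zeta)$ with $\Lambda(\zeta)=\mathrm{diag}(\zeta^{\kappa_1},\ldots,\zeta^{\kappa_{2n+2}})$, $B^+$ holomorphic and invertible on $\Delta$, and $B^-$ holomorphic and invertible on $(\C\cup\infty)\setminus\overline{\Delta}$, the relation $B\overline{B}=I$ translates into $B^+\Lambda B^-\,\overline{B^-}\,\overline{\Lambda}\,\overline{B^+}=I$. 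Since $\overline{\Lambda(\zeta)}=\Lambda(\zeta)^{-1}$ on $\partial\Delta$ and $\zeta\mapsto\overline{B^-(\zeta)}$ extends holomorphically to $\Delta$ (because $B^-$ is holomorphic outside $\overline\Delta$ and we are conjugating), one gets $\Lambda^{-1}(B^+)^{-1}\overline{(B^+)^{-1}} = B^-\overline{B^-}\Lambda$, i.e.\ $(B^+)^{-1}\overline{(B^+)^{-1}}$ and $\Lambda\,B^-\overline{B^-}\Lambda$ agree on $\partial\Delta$; the left side extends holomorphically to $\Delta$, the right to the complement, so by Liouville both are constant, and a rescaling of $B^\pm$ makes this constant the identity.

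Having $(B^+)^{-1}\overline{(B^+)^{-1}}=I$ on $\partial\Delta$, I would then simply set $\Theta:=(B^+)^{-1}$. This $\Theta$ is holomorphic and invertible on $\Delta$, continuous of class $\mathcal{C}^\epsilon$ up to $\partial\Delta$ (the regularity of the Birkhoff factors is inherited from that of $B$, which is $\mathcal{C}^\epsilon$ since $A$ is; this is where the H\"olderian hypothesis is used), and by construction $\Theta(\zeta)B(\zeta)=\Theta(\zeta)B^+(\zeta)\Lambda(\zeta)B^-(\zeta)=\Lambda(\zeta)B^-(\zeta)$. It remains to identify $B^-(\zeta)$ with $\overline{\Theta(\zeta)}$ on $\partial\Delta$: but $\overline{\Theta(\zeta)}=\overline{(B^+(\zeta))^{-1}}=B^+(\zeta)$ would be the naive guess, so instead one uses the relation derived above, $\Lambda B^-\overline{B^-}\Lambda=(B^+)^{-1}\overline{(B^+)^{-1}}=I$, which after the normalization gives $B^-=\Lambda^{-1}\overline{B^-}^{-1}\Lambda^{-1}$; combined with $(B^+)^{-1}\overline{(B^+)^{-1}}=I$, i.e.\ $\overline{B^+}=(B^+)^{-1}=\Theta$, one reads off $\overline{\Theta(\zeta)}=B^+(\zeta)$ and then checks directly from $B\overline{B}=I$ that $B^-(\zeta)=\overline{B^+(\zeta)}=\overline{\Theta(\zeta)}$ on $\partial\Delta$. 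Thus $\Theta(\zeta)A(\zeta)\overline{A(\zeta)^{-1}}=\Lambda(\zeta)\overline{\Theta(\zeta)}$, which is the assertion.

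I expect the main obstacle to be a careful bookkeeping of which maps extend holomorphically to which region and of the normalization constants: the identities $B\overline B=I$, the holomorphy of $\overline{B^-}$ on $\Delta$, and the Liouville step must be chained so that the ambiguity in the Birkhoff factorization (multiplication of $B^+$ on the right and $B^-$ on the left by a constant invertible matrix commuting with $\Lambda$, in fact by a rational matrix function when partial indices coincide) is used exactly once to absorb the Liouville constant into a Hermitian-square form. A secondary point requiring care is the regularity statement: one must invoke that a Birkhoff factorization of a $\mathcal{C}^\epsilon$ loop can itself be chosen with $\mathcal{C}^\epsilon$ factors (this is the content of the classical theory cited via \cite{Birkhoff,Vekua,CG}), since the later application of this lemma to the explicit matrix $B_1$ of (\ref{eqB}) — which is only as smooth as the conormal data allows — needs the factor $\Theta$ in that same H\"older class. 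Everything else is the formal manipulation sketched above; no perturbation or implicit-function argument is needed here, as this lemma is purely about a single fixed loop $A$.
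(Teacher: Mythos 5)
The paper does not prove this lemma itself --- it is quoted verbatim from Globevnik's paper --- so the comparison is with the argument of \cite{Glob}, whose overall skeleton (Birkhoff factorization of $B=A\overline{A^{-1}}$ plus exploitation of the symmetry $B\overline{B}=I$) you have correctly identified. However, the execution contains a genuine error and omits the step that is the actual content of the lemma. First, the identity you derive from $B\overline{B}=I$ is wrong: complex conjugation of a matrix product does not reverse the order of the factors, so $\overline{B^+\Lambda B^-}=\overline{B^+}\,\overline{\Lambda}\,\overline{B^-}$ and the relation reads $B^+\Lambda B^-\,\overline{B^+}\,\overline{\Lambda}\,\overline{B^-}=I$, not $B^+\Lambda B^-\,\overline{B^-}\,\overline{\Lambda}\,\overline{B^+}=I$. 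The correct rearrangement (using $\overline{\Lambda}=\Lambda^{-1}$ on $\partial\Delta$) is $\Lambda\,B^-\overline{B^+}\,\Lambda^{-1}=(B^+)^{-1}\,\overline{B^-}^{-1}=:M$, where the right-hand side extends holomorphically to $\Delta$ and $B^-\overline{B^+}$ extends holomorphically to the exterior.

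Second, and more seriously, your Liouville step fails: conjugation by $\Lambda$ multiplies the $(i,j)$ entry by $\zeta^{\kappa_i-\kappa_j}$, so the two-sided holomorphic extension argument does \emph{not} give a constant matrix. It gives that $M_{ij}\equiv 0$ when $\kappa_i<\kappa_j$ and that $M_{ij}$ is a polynomial of degree at most $\kappa_i-\kappa_j$ otherwise, i.e.\ $M$ is a block-upper-triangular polynomial matrix satisfying the symmetry $\overline{M}=\Lambda^{-1}M^{-1}\Lambda$ on $\partial\Delta$; it is constant only when all partial indices coincide. The heart of Globevnik's Lemma 5.1 is precisely to show that such an $M$ can be split as $M=P\cdot\bigl(\Lambda\,\overline{P}^{-1}\Lambda^{-1}\bigr)$ with $P$ an admissible right-multiplier of $B^+$ (holomorphic and invertible on $\Delta$, with $\Lambda^{-1}P^{-1}\Lambda$ exterior-holomorphic), after which $\Theta:=(B^+P)^{-1}$ satisfies $\Theta B=\Lambda\overline{\Theta}$. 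You dismiss this as ``bookkeeping of normalization constants,'' but it is a nontrivial factorization of a triangular polynomial matrix respecting a conjugate-symmetry, and your final identification $B^-=\overline{\Theta}$ is simply asserted rather than derived. Your remarks on H\"older regularity of the Birkhoff factors are fine, but the proof as written does not establish the lemma.
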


 By applying this lemma to the matrix $A=i\overline{G_1(\zeta)^{-1}}$,  
 we obtain a continuous map $\Theta:\bar{\Delta}\to GL_{2n+2}(\C)$, 
 holomorphic on $\Delta$ such that 
 $$\forall\zeta\in\partial\Delta,\ \Theta(\zeta)B_1(\zeta)=\left(\begin{array}{ccc}\zeta^{\kappa_0}& &(0) \\ &\ddots& \\ (0)& &\zeta^{\kappa_{2n+2}}\end{array}\right)
\overline{\Theta(\zeta)}.$$ 
Denote by $l=(l_1,\hdots,l_{2n+2})$ the last row of the matrix $\Theta$. It follows that for all  $\zeta\in\partial\Delta$
\begin{equation}\label{eqrow}
l(\zeta)B_1(\zeta)=\zeta^{\kappa_{2n+2}}\overline{l(\zeta)}
\end{equation}

\begin{itemize}
\item If $l_1 \not\equiv 0$ then (\ref{eqrow}) gives $-l_1(\zeta)=\zeta^{\kappa_{2n+2}}\overline{l_1(\zeta)}$ 
and by holomorphy of $\Theta$
we get $\kappa_{2n+2}\geq 0$.

\item If $l_1 \equiv 0$ then (\ref{eqrow}) gives two equations $-\zeta l_3(\zeta)=\zeta^{\kappa_{2n+2}}\overline{l_2(\zeta)}$ and 
$-\zeta l_2(\zeta)=\zeta^{\kappa_{2n+2}}\overline{l_3(\zeta)}$. 
\begin{itemize}
\item  If $l_2 \not\equiv 0$ then $l_3 \not\equiv 0$ we obtain  $\kappa_{2n+2}\geq 1$ by holomorphy. 
\item If $l_2 \equiv 0$ then $l_3 \equiv 0$ then we obtain two new equations   
$-\zeta l_5(\zeta)=\zeta^{\kappa_{2n+2}}\overline{l_4(\zeta)}$ and 
$-\zeta l_4(\zeta)=\zeta^{\kappa_{2n+2}}\overline{l_5(\zeta)}$ 
 from  (\ref{eqrow}).  
\end{itemize}
\end{itemize}
Continuing this process we reach the first nonzero element of $l$, say $l_{2p}$ for $p\geq 3$.  If $2p<2n+2$ then 
 (\ref{eqrow}) gives  
$-\zeta l_{2p+1}(\zeta)=\zeta^{\kappa_{2n+2}}\overline{l_{2p}(\zeta)}$
 and 
$-\zeta l_{2p}(\zeta)=\zeta^{\kappa_{2n+2}}\overline{l_{2p+1}(\zeta)}$ 
which imply that $\kappa_{2n+2}\geq 1$. If $2p=2n+2$ then 
 (\ref{eqrow}) gives the equation  $-\zeta^{2} l_{2n+2}(\zeta)=\zeta^{\kappa_{2n+2}}\overline{l_{2n+2}(\zeta)}$ implying 
  $\kappa_{2n+2}\geq 2$. Since $\kappa_1\ge\hdots\ge\kappa_{2n+2}$, we have proved:
\begin{lem}\label{lemparind}
The partial indices of $\mathscr{N}Q$ along $\bm{h}_{|\partial\Delta}$ are nonnegative, hence Theorem \ref{theo:Globev} applies to our situation.
\end{lem}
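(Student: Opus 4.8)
The plan is to reduce the assertion to an explicit computation of the partial indices of the totally real fibration $\mathscr{N}Q$ along the fixed disc $f:=\bm{h}_{|\partial\Delta}\in\mathscr{S}^*(Q)$, and then to isolate the smallest of them by a descent argument on one row of a Birkhoff-type factorization. The set-up hypotheses of Theorem \ref{theo:Globev} are checked at once near $f$: by Proposition \ref{propco} each fiber $\mathscr{N}Q(\zeta)$ is a maximal totally real submanifold of $\C^{2n+2}$ away from the zero section; $f(\zeta)\in\mathscr{N}Q(\zeta)$ by definition of $\mathscr{S}(Q)$; and the explicit formula $(\ref{lift})$ shows $\bm{h}(\zeta)$ never meets the zero section on $\partial\Delta$, so the relevant Jacobian does not vanish there. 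Thus the only point to establish is that the partial indices of $\mathscr{N}Q$ along $f$ are non-negative.

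First I would produce $N=2n+2$ real defining functions $\tilde{r}_0,\dots,\tilde{r}_{2n+1}$ for the fibers by separating real and imaginary parts in the conormal equation $(\ref{equationConormal})$, obtaining the system $(\ref{eqr})$ in which only $\tilde{r}_1$ depends on $\zeta$. With $G(\zeta)=\big(\partial\tilde{r}_i/\partial\bar z_j(\bm{h}(\zeta))\big)_{i,j}$, the indices in question are those of $B(\zeta)=-\overline{G(\zeta)^{-1}}G(\zeta)$, which depends only on the tangent spaces $T_{\bm{h}(\zeta)}\mathscr{N}Q(\zeta)$. The key point is that $B$ can be brought to a very simple form without changing its partial indices by elementary moves on $G$: permuting rows (which leaves $B$ unchanged), permuting columns, right-multiplying by a constant invertible matrix, and multiplying individual columns by nowhere-vanishing antiholomorphic functions — each is readily seen to be compatible with the Birkhoff factorization. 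Carrying these out and plugging in $(\ref{lift})$, in particular $2w_0=b\zeta|1-a\zeta|^2$ on $\partial\Delta$, reduces $G$ to the block-upper-triangular matrix $G_1(\zeta)$ with diagonal blocks $1$, then $n$ copies of $P=\left(\begin{smallmatrix}\zeta(1-a\zeta)&1-\bar a\bar\zeta\\ i\zeta(1-a\zeta)&-i(1-\bar a\bar\zeta)\end{smallmatrix}\right)$, then $-i\zeta$. Since $\overline{P^{-1}}P=R:=\left(\begin{smallmatrix}0&\zeta\\ \zeta&0\end{smallmatrix}\right)$, the matrix $B_1(\zeta)=-\overline{G_1(\zeta)^{-1}}G_1(\zeta)$ is block-upper-triangular with diagonal blocks $-1$, then $n$ copies of $-R$, then $\zeta^2$, as in $(\ref{eqB})$. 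One cannot read the partial indices off the diagonal, since they are not additive over a block-triangular structure (only over block-diagonal ones) — which is exactly why the factorization step is needed.

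I would then apply the factorization lemma (\cite{Glob}, Lemma 5.1) to $A(\zeta)=i\overline{G_1(\zeta)^{-1}}$, obtaining $\Theta\colon\bar\Delta\to GL_{2n+2}(\C)$ of class $\mathcal{C}^\epsilon$, holomorphic on $\Delta$, with $\Theta(\zeta)B_1(\zeta)=\mathrm{diag}(\zeta^{\kappa_1},\dots,\zeta^{\kappa_{2n+2}})\,\overline{\Theta(\zeta)}$ on $\partial\Delta$, where $\kappa_1\ge\dots\ge\kappa_{2n+2}$ are the partial indices. Since these decrease, it suffices to prove $\kappa_{2n+2}\ge0$. Writing $l=(l_1,\dots,l_{2n+2})$ for the last row of $\Theta$ gives $(\ref{eqrow})$: $l(\zeta)B_1(\zeta)=\zeta^{\kappa_{2n+2}}\overline{l(\zeta)}$, one scalar equation per column of $B_1$. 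Because of the sparseness of $B_1$, the $j$-th such equation only involves $l_j$, its partner in the corresponding $-R$ block, and components $l_k$ with $k<j$. So I would descend through the coordinates: if $l_1\not\equiv0$, the first column gives $-l_1=\zeta^{\kappa_{2n+2}}\overline{l_1}$, and holomorphy of $l_1$ forces $\kappa_{2n+2}\ge0$; if $l_1\equiv0$, the first $-R$ block gives $-\zeta l_3=\zeta^{\kappa_{2n+2}}\overline{l_2}$ and $-\zeta l_2=\zeta^{\kappa_{2n+2}}\overline{l_3}$, forcing $\kappa_{2n+2}\ge1$ unless $l_2\equiv l_3\equiv0$, in which case one passes to the next block; continuing down to the first non-vanishing component, the worst case is that only $l_{2n+2}$ survives, where the last column gives $-\zeta^2l_{2n+2}=\zeta^{\kappa_{2n+2}}\overline{l_{2n+2}}$, whence $\kappa_{2n+2}\ge2$. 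Since $l$ is a row of the invertible matrix $\Theta(\zeta)$ it is not identically zero, so one of these cases occurs and $\kappa_{2n+2}\ge0$. Therefore all partial indices of $\mathscr{N}Q$ along $f$ are non-negative, and Theorem \ref{theo:Globev} applies.

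I expect the main difficulty to be the bookkeeping in the reduction of $G$ to $G_1$: keeping precise track of which elementary operations preserve the partial indices of $B$, and substituting $(\ref{lift})$ correctly so that the entries $2w_0$ turn into $b\zeta|1-a\zeta|^2$. The descent argument is then essentially routine, the one delicate point being that the off-diagonal blocks of $B_1$ do not corrupt the equations — which is the case precisely because at each stage they couple a given column only to components $l_k$ that have already been shown to vanish.
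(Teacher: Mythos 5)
Your proposal is correct and follows essentially the same route as the paper's own proof: the same reduction of $G$ to the block-triangular $G_1$ by index-preserving elementary operations using the explicit lift (\ref{lift}), the same passage to $B_1$ in (\ref{eqB}), and the same descent on the last row of the factorizing matrix $\Theta$ from Globevnik's Lemma 5.1 to bound $\kappa_{2n+2}$ from below. Nothing essential is missing.
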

And more precisely, we have that the Maslov index of $\mathscr{N}Q$ along $\bm{h}_{|\partial\Delta}$ is $2n+2$.
This is a direct consequence of the following Lemma (see for instance \cite{BL} for a proof)
\begin{lem}\label{lemmasind}
Assume that the determinant $\mathrm{det} B$ is of class $\mathcal{C}^1$ on $\partial \Delta$. Then the Maslov index  of $B$ is given by
$$\mathrm{Ind}_{\mathrm{det}B(\partial \Delta)}(0)=\frac{1}{2\pi i}\int_{\partial \Delta}\frac{(\mathrm{det}B)'(\zeta)}{\mathrm{det}B(\zeta)}\,\mathrm{d}\zeta.$$
\end{lem}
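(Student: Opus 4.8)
The plan is to pass to the Birkhoff factorization of $B$ and exploit the multiplicativity of the winding number about the origin under pointwise products of loops in $\C^{*}$. Recall from Section~\ref{sectionGlob} that the partial indices $\kappa_{1}\ge\cdots\ge\kappa_{N}$ of $B$ arise from a factorization
\[
B(\zeta)=B^{+}(\zeta)\left(\begin{array}{ccc}\zeta^{\kappa_{1}}& &(0) \\ &\ddots& \\ (0)& &\zeta^{\kappa_{N}}\end{array}\right)B^{-}(\zeta),\qquad \zeta\in\partial\Delta,
\]
where $B^{+}:\bar{\Delta}\to GL_{N}(\C)$ is continuous and holomorphic on $\Delta$, and $B^{-}:(\C\cup\infty)\setminus\Delta\to GL_{N}(\C)$ is continuous and holomorphic on $\C\setminus\bar{\Delta}$. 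Taking determinants, on $\partial\Delta$ one obtains $\det B=(\det B^{+})\cdot\zeta^{\kappa}\cdot(\det B^{-})$ with $\kappa=\sum_{j=1}^{N}\kappa_{j}$ the Maslov index; so it suffices to compute the winding number about $0$ of each of the three factors separately.

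First I would treat $\det B^{+}$. Since $B^{+}$ takes values in $GL_{N}(\C)$, the function $\det B^{+}$ is continuous and nowhere vanishing on $\bar{\Delta}$, and the homotopy $(t,\zeta)\mapsto\det B^{+}(t\zeta)$, $t\in[0,1]$, contracts the loop $\det B^{+}|_{\partial\Delta}$ to the constant $\det B^{+}(0)\neq 0$ within $\C^{*}$; hence its winding number about $0$ is $0$. Likewise $\det B^{-}$ is continuous and nowhere vanishing on the exterior domain $(\C\cup\infty)\setminus\Delta$, and the analogous homotopy contracting $\partial\Delta$ toward $\infty$ deforms $\det B^{-}|_{\partial\Delta}$ to the constant $\det B^{-}(\infty)\neq 0$, so its winding number is also $0$. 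Finally $\zeta\mapsto\zeta^{\kappa}$ has winding number exactly $\kappa$ about $0$. Adding up, $\mathrm{Ind}_{\det B(\partial\Delta)}(0)=\kappa$, which is the Maslov index of $B$.

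It remains only to rewrite this winding number as the stated integral, which is the classical argument principle: since $\det B$ is of class $\mathcal{C}^{1}$ on $\partial\Delta$ and does not vanish there, the map $\zeta\mapsto\det B(\zeta)$ is a $\mathcal{C}^{1}$ loop in $\C^{*}$, and its index about the origin equals $\frac{1}{2\pi i}\int_{\partial\Delta}\frac{(\det B)'(\zeta)}{\det B(\zeta)}\,\mathrm{d}\zeta$. The $\mathcal{C}^{1}$ hypothesis is used precisely here, to make the right-hand side meaningful. The only point deserving care is the vanishing of the winding number of $\det B^{-}$: one must use that $B^{-}$ is defined and invertible \emph{up to and including} $\zeta=\infty$ (equivalently, perform the change of variable $\zeta\mapsto 1/\zeta$, which turns the exterior domain into a disc on which the transformed determinant is holomorphic and non-vanishing); without this, the factor $\det B^{-}$ could contribute and the formula would fail. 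Everything else is routine.
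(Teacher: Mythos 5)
Your argument is correct and is essentially the standard proof, which the paper itself omits (it only refers to \cite{BL} for this lemma): take determinants in the Birkhoff factorization, use additivity of the winding number under pointwise products of loops in $\C^*$, observe that $\det B^{+}$ and $\det B^{-}$ contribute zero because they are invertible and continuous up to the boundary on $\bar{\Delta}$ and on the exterior including $\infty$ respectively, and finally identify the winding number with the integral via the argument principle, which is where the $\mathcal{C}^1$ hypothesis enters. Your remark that the non-vanishing of $\det B^{-}$ at $\infty$ is the one point needing care is well taken; nothing further is required.
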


Since $\bm{h}$ is smooth, Theorem \ref{theo:Globev} gives open neighborhoods $\tilde{V}$ of $\tilde{r}$ in $\mathcal{C}^{1,\epsilon}(\partial\Delta,\mathcal{C}^3(\B)^{2n+2})$, $U$ of the origin in $\R^{4n+4}$, $\tilde{W}$ of $\bm{h}_{|\partial\Delta}$ in $\mathcal{C}^{1,\epsilon}(\partial\Delta,T^*\C^{n+1})$, and a map $\tilde{\mathcal{F}}:\tilde{V}\times U\to\mathcal{C}^{1,\epsilon}(\partial\Delta,T^*\C^{n+1})$ of class $\mathcal{C}^1$ such that
\begin{itemize}
\item $\tilde{\mathcal{F}}(\tilde{r},0)=\bm{h}_{|\partial\Delta}$,
\item for all $(\tilde{\rho},t)\in \tilde{V}\times U$, the map $\zeta\mapsto\tilde{\mathcal{F}}(\tilde{\rho},t)(\zeta)$ is the boundary of a holomorphic disc attached to 
$$\mathscr{E}_{\tilde{\rho}}=\left\{\mathscr{E}_{\tilde{\rho}}(\zeta):=\{\omega\in\B | \tilde{\rho}_j(\zeta)(\omega)=0,\ 1\le j\le N\}\right\},$$
\item for every $\tilde{\rho}\in\tilde{V}$, the map $\tilde{\mathcal{F}}(\tilde{\rho},.)$ is one-to-one,
\item if $\bm{f}\in \tilde{W}$ is the boundary of a holomorphic disc attached to $\mathscr{E}_{\tilde{\rho}}$, then there exists $t\in U$ such that 
$\bm{f}=\tilde{\mathcal{F}}(\tilde{\rho},t)$.
\end{itemize}

Let $\B\subset\C^{n+1}$ be an open ball  centered at the origin. 
If the hypersurface $\Gamma^\rho$ is given by a defining function $\rho$ in a neighborhood of $r$ for the $\mathcal{C}^4(\B)$-topology, then the equation $\tilde{\rho}$ of the fibration $\mathscr{N}\Gamma^\rho$ is in a neighborhood of the equation $\tilde{r}$ of $\mathscr{N}Q$ for the 
$\mathcal{C}^{1,\epsilon}(\partial\Delta,\mathcal{C}^3(\B\times\B)^{2n+2})$-topology.
Thus we get:

\begin{theo}\label{descriptionTousDisques}
Let $Q=\{r=0\}$ where $r(z)=\Re e z_0-\tsp\bar{z}_\alpha A z_\alpha$ and $A$ is an invertible Hermitian $(n\times n)$ matrix. Fix $\bm{h}\in\mathscr{S}^*(Q)$ and an open ball $\B\subset\C^{n+1}$ such that $\bm{h}(\partial\Delta)\subset\B\times\B$. 
Then for any $0<\epsilon<1$, there exist some open 
neighborhoods $V$ of $r$ in $\mathcal{C}^4(\B)$ and $U$ of 0 in $\R^{4n+4}$, $\delta>0$, and a map
$\mathcal{F}:V \times U \to \mathrm{Hol}(\Delta,T^*\C^{n+1})\cap 
\mathcal{C}^{1,\epsilon}(\bar{\Delta},T^*\C^{n+1})$ of class $\mathcal{C}^1$ with respect to the 
$\mathcal{C}^{1,\epsilon}(\partial\Delta)$-topology, such that:
\begin{enumerate}[i)]
\item $\mathcal{F}(r,0)=\bm{h}_{|\partial\Delta}$,
\item for all $\rho\in V$, the map $\mathcal{F}(\rho,\cdot):U\to\{\bm{f}\in\mathscr{S}(\Gamma^\rho)\ |\ \|\bm{f}-\bm{h}\|_{\mathcal{C}^{1,\epsilon}(\partial\Delta)}<\delta\}$ is one-to-one and onto.
\end{enumerate}
\end{theo}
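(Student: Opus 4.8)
The plan is to deduce the theorem from Globevnik's Theorem \ref{theo:Globev}, which has just been applied to the totally real fibration $\mathscr{N}Q$ and the boundary map $\bm{h}_{|\partial\Delta}$ of the fixed disc $\bm{h}\in\mathscr{S}^*(Q)$: recall that $\bm{h}$ is smooth up to $\partial\Delta$ (Proposition \ref{2jet-discs}), that the partial indices are nonnegative (Lemma \ref{lemparind}), and that the Maslov index equals $2n+2$ (Lemma \ref{lemmasind} together with the block form (\ref{eqB})). This gave neighborhoods $\tilde V\ni\tilde r$ in $\mathcal{C}^{1,\epsilon}(\partial\Delta,\mathcal{C}^3(\B\times\B)^{2n+2})$, $U\ni 0$ in $\R^{4n+4}$, $\tilde W\ni\bm{h}_{|\partial\Delta}$ in $\mathcal{C}^{1,\epsilon}(\partial\Delta,T^*\C^{n+1})$, and a $\mathcal{C}^1$ map $\tilde{\mathcal{F}}:\tilde V\times U\to\mathcal{C}^{1,\epsilon}(\partial\Delta,T^*\C^{n+1})$ with the four properties listed just above. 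Everything else is a translation of this data into the language of defining functions and stationary discs.

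First I would handle the passage $\rho\mapsto\tilde\rho$. Writing the conormal condition $(z,\zeta w)\in\mathscr{N}\Gamma^\rho(\zeta)$ as in (\ref{eqr}) --- that is, $\rho(z)=0$ together with $2n+1$ real relations whose coefficients are built from $\rho$ and its first-order derivatives at $z$, only the reality relation on the fibre component involving $\zeta^{\pm1}$ and not involving $\rho$ --- shows that $\rho\mapsto\tilde\rho$ is a $\mathcal{C}^1$ (indeed smooth) map from $\mathcal{C}^4(\B)$ into $\mathcal{C}^{1,\epsilon}(\partial\Delta,\mathcal{C}^3(\B\times\B)^{2n+2})$; this is the step where one derivative is lost and where the $\mathcal{C}^4$-hypothesis is used. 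Moreover condition $iii)$ of Section \ref{sectionGlob} holds for $\tilde r$ near $\bm{h}(\partial\Delta)$ --- the relevant Jacobian being, up to invertible left and right factors, the block-triangular matrix obtained after (\ref{eqr}), whose determinant is a nonzero multiple of $w_0$, and $w_0$ does not vanish on $\bm{h}(\partial\Delta)$ --- so it persists, and $\Gamma^\rho$ stays Levi non-degenerate by Proposition \ref{propco}, for $\rho$ in a small enough $\mathcal{C}^4(\B)$-neighborhood of $r$ (with $\B$ itself taken small enough). Pick such a neighborhood inside the preimage of $\tilde V$ under $\rho\mapsto\tilde\rho$ and call it $V$; for $\rho\in V$ set $\mathcal{F}(\rho,t):=E(\tilde{\mathcal{F}}(\tilde\rho,t))$, where $E$ is the holomorphic extension to $\Delta$ of a boundary map having only nonnegative Fourier modes. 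Since $\tilde{\mathcal{F}}(\tilde\rho,t)$ is by construction the trace of a holomorphic disc and $E$ is a bounded linear operator (the Cauchy transform) from that subspace of $\mathcal{C}^{1,\epsilon}(\partial\Delta)$ into $\mathcal{C}^{1,\epsilon}(\bar\Delta)$, the map $\mathcal{F}$ is of class $\mathcal{C}^1$ for the $\mathcal{C}^{1,\epsilon}(\partial\Delta)$-topology with values in $\mathrm{Hol}(\Delta,T^*\C^{n+1})\cap\mathcal{C}^{1,\epsilon}(\bar\Delta,T^*\C^{n+1})$, and $\mathcal{F}(r,0)=\bm{h}$. This gives $i)$.

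Next I would identify the image of $\mathcal{F}(\rho,\cdot)$ with a piece of $\mathscr{S}(\Gamma^\rho)$. Writing $\mathcal{F}(\rho,t)=(f,g)$: the fibration $\mathscr{E}_{\tilde\rho}(\zeta)$ is exactly $\mathscr{N}\Gamma^\rho(\zeta)$, so the boundary condition means $f(\partial\Delta)\subset\Gamma^\rho$ and $g(\zeta)=\zeta w(\zeta)$ with $w(\zeta)\in N_{f(\zeta)}^*\Gamma^\rho\setminus\{0\}$; as $h$ is non-constant, so is $f$ once $V$ and $U$ are small, hence $\mathcal{F}(\rho,t)\in\mathscr{S}(\Gamma^\rho)$. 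Conversely, by Definition \ref{defstat}, any $\bm{f}\in\mathscr{S}(\Gamma^\rho)$ is a holomorphic disc --- its fibre component $\zeta h^*$ being holomorphic on $\Delta$ --- whose trace is attached to $\mathscr{E}_{\tilde\rho}$, and $\bm{f}_{|\partial\Delta}-\bm{h}_{|\partial\Delta}$ extends holomorphically to $\Delta$ as the trace of $\bm{f}-\bm{h}$. Now fix $\delta>0$ so that the $\delta$-ball around $\bm{h}_{|\partial\Delta}$ in $\mathcal{C}^{1,\epsilon}(\partial\Delta)$ lies in $\tilde W$, and shrink $V$ and $U$ so that $\tilde{\mathcal{F}}(\tilde V\times U)$ lies in that $\delta$-ball (possible since $\tilde{\mathcal{F}}$ is continuous and $\tilde{\mathcal{F}}(\tilde r,0)=\bm{h}_{|\partial\Delta}$). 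Then, for $\rho\in V$, the map $\mathcal{F}(\rho,\cdot)$ is one-to-one by $iii)$ of Theorem \ref{theo:Globev} (since $\mathcal{F}(\rho,t)$ and $\tilde{\mathcal{F}}(\tilde\rho,t)$ have the same trace); it takes values in $\{\bm{f}\in\mathscr{S}(\Gamma^\rho)\mid\|\bm{f}-\bm{h}\|_{\mathcal{C}^{1,\epsilon}(\partial\Delta)}<\delta\}$ by the remarks above; and it is onto this set, because any such $\bm{f}$ has trace in $\tilde W$, attached to $\mathscr{E}_{\tilde\rho}$, with $\bm{f}_{|\partial\Delta}-\bm{h}_{|\partial\Delta}$ holomorphically extendable, so $iv)$ of Theorem \ref{theo:Globev} furnishes $t\in U$ with $\bm{f}_{|\partial\Delta}=\tilde{\mathcal{F}}(\tilde\rho,t)$, whence $\bm{f}=\mathcal{F}(\rho,t)$ by uniqueness of the holomorphic extension. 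This is $ii)$.

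I do not expect any single step to be difficult once Theorem \ref{theo:Globev} is available. The point deserving genuine care is the bookkeeping of the first step: verifying that $\rho\mapsto\tilde\rho$ really lands in the regularity class $\mathcal{C}^{1,\epsilon}(\partial\Delta,\mathcal{C}^3(\B\times\B)^{2n+2})$ demanded by this version of Globevnik's theorem --- which is exactly what forces the standing $\mathcal{C}^4$-hypothesis on the hypersurfaces --- and ensuring that the non-degeneracy condition $iii)$ of Section \ref{sectionGlob}, which fails on the zero section of $T^*\C^{n+1}$, holds on the ambient domain used for $Q$ and remains valid under small $\mathcal{C}^4(\B)$-perturbations of $r$; this is why the domain must be taken around $\bm{h}(\partial\Delta)$, along which $w_0$ stays bounded away from $0$.
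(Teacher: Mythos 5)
Your proposal is correct and follows essentially the same route as the paper: apply Globevnik's Theorem \ref{theo:Globev} to the fibration $\mathscr{N}Q$ along $\bm{h}_{|\partial\Delta}$ (using the nonnegative partial indices and the Maslov index $2n+2$ computed in Section 3.2), and then pull the resulting neighborhood $\tilde{V}$ back through the continuous map $\rho\mapsto\tilde{\rho}$ from the $\mathcal{C}^4(\B)$-topology to the $\mathcal{C}^{1,\epsilon}(\partial\Delta,\mathcal{C}^3(\B\times\B)^{2n+2})$-topology. You are in fact somewhat more explicit than the paper about the loss of one derivative in $\rho\mapsto\tilde{\rho}$ and about keeping $w_0$ bounded away from the zero section so that condition $iii)$ of Section \ref{sectionGlob} holds near $\bm{h}(\partial\Delta)$.
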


For a fixed $\bm{h}\in\mathscr{S}^*(Q)$, Theorem \ref{descriptionTousDisques} describes all nearby discs in $\mathscr{S}{\Gamma^\rho}$ as soon as $\rho$ is close to $r$ (note that the neigborhoods depends on the choice of $\bm{h}$, and of course of $Q$). Since the map $\mathcal{F}$ is of class $\mathcal{C}^1$, most properties of the discs in $\mathscr{S}(Q)$ remain true for the discs in $\mathscr{S}(\Gamma^\rho)$. Let us state more precise results.

\subsection{Discs tied to the origin}\label{subsubs:Perturbation}
The following statement is the analogue of Proposition \ref{2jet-discs}:
\begin{theo}\label{paramperturb}
Let $Q=\{r=0\}$ where $r(z)=\Re e z_0-\tsp\bar{z}_\alpha A z_\alpha$ and $A$ is an invertible Hermitian $(n\times n)$ matrix. Fix $\bm{h}\in\mathscr{S}^*(Q)$, and an open ball $\B\subset\C^{n+1}$ such that $\bm{h}(\partial{\Delta})\subset\B\times\B$. 
Then for any $0<\epsilon<1$, there exist 
$\varepsilon>0$ and $\delta>0$, both depending on $Q$ and $\bm{h}$, such that if $\|\rho-r\|_{\mathcal{C}^4(\B)}<\varepsilon$ (with $\rho$ in normal form), the set
$$\mathscr{S}_{\bm{h},\delta}^*(\Gamma^{{\rho}}):=\{\bm{f}=(f,g) \in\mathscr{S}^*(\Gamma^{{\rho}})\ |
\|\bm{f}-\bm{h}\|_{\mathcal{C}^{1,\epsilon}(\partial\Delta)}<\delta\}$$
forms a (2n+2)-real parameter family. Moreover, if $\tsp\overline{h_\alpha(0)}A h_\alpha(0)\not=0$, one can reduce the neighborhoods in order to get:
\begin{enumerate}[i)]
\item the discs $\bm{f}\in\mathscr{S}^*_{\bm{h},\delta}(\Gamma^\rho)$ are in $\mathcal{C}^{2,\epsilon}$ and satisfy $\bm{f}(\overline{\Delta})\subset\B\times\B$;
\item the map $\bm{f}\mapsto f(0)$ is a diffeomorphism of class $\mathcal{C}^1$ from $\mathscr{S}^*_{\bm{h},\delta}(\Gamma^\rho)$ onto its image;
\item the map $\bm{f}\mapsto (f_\alpha'(1),f'_0(1)(g_0)'(1))$ is a diffeomorphism of class $\mathcal{C}^1$ from $\mathscr{S}^*_{\bm{h},\delta}(\Gamma^\rho)$ onto its image.
\end{enumerate}
\end{theo}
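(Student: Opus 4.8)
The plan is to transfer the properties established for $\mathscr{S}^*(Q)$ in Proposition \ref{2jet-discs} to the perturbed family $\mathscr{S}^*_{\bm{h},\delta}(\Gamma^\rho)$ by a standard implicit-function/continuity argument, using that the Globevnik map $\mathcal{F}$ of Theorem \ref{descriptionTousDisques} is of class $\mathcal{C}^1$ with respect to $\rho$ in the $\mathcal{C}^4(\B)$-topology. First I would observe that by Theorem \ref{descriptionTousDisques}, for $\rho$ in a $\mathcal{C}^4$-neighborhood $V$ of $r$, the discs in $\mathscr{S}(\Gamma^\rho)$ that are $\mathcal{C}^{1,\epsilon}(\partial\Delta)$-close to $\bm{h}$ are exactly parametrized by $t\in U\subset\R^{4n+4}$ via $\mathcal{F}(\rho,\cdot)$. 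Imposing the two real conditions $f(1)=0$ and the normalization $f^*(1)=(1,0,\dots,0)$ (which by the discussion in Section \ref{subsection-stationary} amounts to $2n+2$ real equations, since $\partial\rho_0=(1/2,0,\dots,0)$ when $\rho$ is in normal form) cuts out $\mathscr{S}^*_{\bm{h},\delta}(\Gamma^\rho)$ inside $\mathscr{S}(\Gamma^\rho)$. At $\rho=r$ these conditions are satisfied by $\bm{h}$, and by Corollary \ref{corsub} the maps $\bm{h}\mapsto h(1)$ and $\bm{h}\mapsto\bm{h}(1)$ are submersions on $\mathscr{S}(Q)$; hence the differential of the constraint map, composed with $\mathcal{F}(r,\cdot)$, has rank $2n+2$ at $t=0$. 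This rank condition is open, so after shrinking $V$ and $U$ the constraint map has constant rank $2n+2$ for all $\rho\in V$, and the implicit function theorem realizes $\mathscr{S}^*_{\bm{h},\delta}(\Gamma^\rho)$ as a $\mathcal{C}^1$-submanifold of real dimension $(4n+4)-(2n+2)=2n+2$, depending $\mathcal{C}^1$-ly on $\rho$; this gives the ``$(2n+2)$-real parameter family'' assertion.

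Next I would upgrade the regularity in item $i)$: since $\rho$ is $\mathcal{C}^4$, the conormal fibration $\mathscr{N}\Gamma^\rho$ is $\mathcal{C}^{2,\epsilon}$, so by the elliptic regularity results cited after Definition \ref{defstat} (following \cite{Chirka}) every $\bm{f}\in\mathscr{S}(\Gamma^\rho)$ actually lies in $\mathrm{Hol}(\Delta)\cap\mathcal{C}^{2,\epsilon}(\bar\Delta,T^*\C^{n+1})$, in particular the discs in $\mathscr{S}^*_{\bm{h},\delta}(\Gamma^\rho)$ are $\mathcal{C}^{2,\epsilon}$; the containment $\bm{f}(\bar\Delta)\subset\B\times\B$ follows from the $\mathcal{C}^{1,\epsilon}(\partial\Delta)$-closeness to $\bm{h}$ together with the maximum principle applied componentwise, after possibly shrinking $\delta$ and enlarging $\B$ slightly at the outset as in the hypothesis $\bm{h}(\partial\Delta)\subset\B\times\B$. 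For items $ii)$ and $iii)$, the strategy is again perturbative: consider the $\mathcal{C}^1$ maps $\Phi_\rho:\bm{f}\mapsto f(0)$ and $\Psi_\rho:\bm{f}\mapsto(f'_\alpha(1),f'_0(1)(g_0)'(1))$ on the $(2n+2)$-manifold $\mathscr{S}^*_{\bm{h},\delta}(\Gamma^\rho)$, all assembled through the $\mathcal{C}^1$-dependence on $\rho$. At $\rho=r$, Proposition \ref{2jet-discs}$(ii)$–$(iii)$ tells us that $\Phi_r$ and $\Psi_r$ are diffeomorphisms onto their images near $\bm{h}$ — here one uses precisely the hypothesis $\tsp\overline{h_\alpha(0)}Ah_\alpha(0)\neq 0$, which puts $\bm{h}$ in the open set where the parametrization of Proposition \ref{2jet-discs} is a local diffeomorphism, so that $d(\Phi_r)_{\bm{h}}$ and $d(\Psi_r)_{\bm{h}}$ are invertible. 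Invertibility of the differential is an open condition in the $\mathcal{C}^1$-topology on $(\rho,\bm{f})$; hence, after shrinking $\varepsilon$ (controlling $\|\rho-r\|_{\mathcal{C}^4(\B)}$) and $\delta$, the differentials $d(\Phi_\rho)_{\bm{f}}$ and $d(\Psi_\rho)_{\bm{f}}$ remain invertible at every $\bm{f}\in\mathscr{S}^*_{\bm{h},\delta}(\Gamma^\rho)$, so by the inverse function theorem $\Phi_\rho$ and $\Psi_\rho$ are local $\mathcal{C}^1$-diffeomorphisms; shrinking $\delta$ once more to a connected piece on which they are moreover injective (again using that injectivity persists under small $\mathcal{C}^1$-perturbation of a map which is injective on a compact piece with invertible differential) yields the claimed global diffeomorphisms onto their images.

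The one genuinely delicate point, and the main obstacle, is bookkeeping the topologies: $\mathcal{F}$ is $\mathcal{C}^1$ as a map into $\mathcal{C}^{1,\epsilon}(\partial\Delta)$, whereas items $i)$ and $iii)$ require controlling $f'_\alpha(1)$, $f'_0(1)$ and $(g_0)'(1)$ — i.e.\ first boundary derivatives — which is fine in $\mathcal{C}^{1,\epsilon}(\partial\Delta)$, but one must be careful that the evaluation-of-derivative functionals $\bm{f}\mapsto\bm{f}'(1)$ are continuous on $\mathcal{C}^{1,\epsilon}(\partial\Delta)$ (they are, being bounded by the norm) and that the composition with $\mathcal{F}$ stays $\mathcal{C}^1$; the $\mathcal{C}^{2,\epsilon}$-regularity from item $i)$ is what guarantees these derivatives actually exist and vary continuously. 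A second subtlety is that the constraint ``$\rho$ in normal form'' must be used to ensure $\partial\rho_0=(1/2,0,\dots,0)$ so that the normalization defining $\mathscr{S}^*$ is the same $2n+2$ conditions for all $\rho\in V$ (this is exactly what makes the fibered condition $f^*(1)=(1,0,\dots,0)$ meaningful and is why the Remark after Definition \ref{defstat} records $F_*\mathscr{S}^*(\Gamma)\subset\mathscr{S}^*(\Gamma')$ when $F$ is the identity to first order). Apart from these two points the proof is a routine application of the implicit and inverse function theorems in Banach (or finite-dimensional, after the first reduction) spaces, transferring Proposition \ref{2jet-discs} verbatim.
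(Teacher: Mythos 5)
Your proposal is correct and follows essentially the same route as the paper: the authors likewise cut $\mathscr{S}^*_{\bm{h},\delta}(\Gamma^\rho)$ out of Globevnik's $(4n+4)$-parameter family via the evaluation map $(\rho,t)\mapsto\mathcal{F}(\rho,t)(1)$, whose partial differential in $t$ has rank $2n+2$ at $(r,0)$ by Corollary \ref{corsub} and hence for nearby $(\rho,t)$, and then transfer items $ii)$ and $iii)$ from Proposition \ref{2jet-discs} by openness of maximal rank and the inverse function theorem. The only cosmetic difference is that you spell out the maximum-principle argument for $\bm{f}(\overline{\Delta})\subset\B\times\B$ and the persistence-of-injectivity step, which the paper leaves implicit.
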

\noindent Item $iii)$ implies that the map $\bm{f}\mapsto\bm{f}'(1)$ defined on $\mathscr{S}^*_{\bm{h},\delta}(\Gamma^\rho)$ is one-to-one.

\begin{proof}
Let $\mathcal{F}:V \times U \to 
\mathrm{Hol}(\Delta,T^*\C^{n+1})\cap \mathcal{C}^{1,\epsilon}(\bar{\Delta},T^*\C^{n+1})$ 
be the $\mathcal{C}^1$-map given by Theorem \ref{descriptionTousDisques}, and $\phi$ be the $\mathcal{C}^1$-map from $V \times U$ to $T^*\C^{n+1}$ defined by 
$\phi(\rho,t):=\mathcal{F}(\rho,t)(1)$. The map $(\rho,t)\mapsto \frac{\partial} {\partial t}\phi(\rho,t)$ is continuous from $V\times U$ 
to the Banach space $\mathcal{L}_c(T^*\C^{n+1})$ of continuous linear maps. According to Corollary \ref{submersion}, 
$\frac{\partial} {\partial t}\phi(r,0)$ is of rank $2n+2$. For $\rho$ in a neighborhood of $r$ and $t$ sufficiently small, $\frac{\partial} {\partial t}\phi(\rho,t)$ is thus of rank at least $2n+2$. This proves that for a fixed $\rho$, $\phi(\rho,\cdot):U\to N^*(\Gamma^\rho)$ is a submersion and hence $\phi(\rho,\cdot)^{-1}\{(0,\dots,0,1,0,\dots,0)\}$ is a submanifold of codimension $2n+2$ of the open set $U$. This exactly means that $\mathscr{S}_{\bm{h},\delta}^*(\Gamma^{{\rho}})$
is a $(4n+4)-(2n+2)$-parameter family which proves the first part of the theorem.
Statement $i)$ comes from the regularity results mentioned in section \ref{subsection-stationary}, since $N^*\Gamma^\rho$ is of class $\mathcal{C}^{1,\epsilon}$.

Let $\psi$ be the $\mathcal{C}^1$ map from $V \times U$ to $\C^{n+1}$ defined by 
$\psi(\rho,t):=\pi \circ \mathcal{F}(\rho,t)(0)$, where $\pi$ is the canonical projection onto the first 
$n+1$ components.    
According to Proposition \ref{2jet-discs}, $\frac{\partial} {\partial t}\psi(r,0)$ is an isomorphism since we have assumed $\tsp\overline{h_\alpha(0)}A h_\alpha(0)\not=0$. 
Hence  for $\rho$ sufficiently close to $r$ with respect to the $\mathcal{C}^4$-topology, we still get that $\frac{\partial} {\partial t}\psi(\rho,0)$ is of maximal rank and obtain the result by the inverse function theorem.

For $iii)$, we define the corresponding
$\varphi(\rho,t)$. This map is of class $\mathcal{C}^1$ since the linear map $\bm{f}\mapsto\bm{f}'(1)$ is smooth on 
$\mathrm{Hol}(\Delta,T^*\C^{n+1})\cap \mathcal{C}^{1,\epsilon}(\bar{\Delta},T^*\C^{n+1})$ for the
$\mathcal{C}^{1,\epsilon}(\partial\Delta)$-topology. According to Proposition \ref{2jet-discs}, 
$\frac{\partial} {\partial t}\varphi(r,0)$ is invertible, so we conclude by arguments similar to those of $ii)$.
\end{proof}

The proof of $ii)$ by means of the implicit function theorem also gives:

\begin{cor}\label{propopbis}
In Theorem \ref{paramperturb}, let $\Omega:=\{v=(v_0,v_\alpha)\in\C^{n+1}\ |\ \tsp\overline{v}_\alpha A v_\alpha\not=0\}$, and assume $h(0)\in\Omega$. Then we can restrict $\varepsilon$ and $\delta$ such that there exists an open neighborhood $O$ of $h(0)$ satisfying $O\subset\Omega$ and 
$O\subset\{f(0)\ |\ \bm{f}\in\mathscr{S}_{\bm{h},\delta/2}^*(\Gamma^{{\rho}})\}$ 
as soon as $\|\rho-r\|_{\mathcal{C}^4(\B)}<\varepsilon$.
\end{cor}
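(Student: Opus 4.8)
The plan is to revisit the proof of item $(ii)$ of Theorem~\ref{paramperturb} quantitatively, keeping track of the dependence of all neighborhoods on $\rho$, and to invoke the uniform (parametrized) form of the inverse function theorem. Recall from that proof the $\mathcal{C}^1$-map $\mathcal{F}\colon V\times U\to\mathrm{Hol}(\Delta,T^*\C^{n+1})\cap\mathcal{C}^{1,\epsilon}(\bar\Delta,T^*\C^{n+1})$ given by Theorem~\ref{descriptionTousDisques}, the evaluation $\phi(\rho,t):=\mathcal{F}(\rho,t)(1)\in N^*\Gamma^\rho$, and the center map $\psi(\rho,t):=\pi\circ\mathcal{F}(\rho,t)(0)\in\C^{n+1}$ (with $\pi$ the projection onto the first $n+1$ components), all of class $\mathcal{C}^1$, with $\phi(r,0)=\mathbf{p}:=(0;(1,0,\dots,0))$ and $\psi(r,0)=h(0)$. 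For each fixed $\rho$ close to $r$, the set $S_\rho:=\phi(\rho,\cdot)^{-1}\{\mathbf{p}\}$ is a $(2n+2)$-dimensional submanifold of $U$ which $\mathcal{F}(\rho,\cdot)$ identifies (after intersecting with the $\delta$-ball around $\bm{h}$) with $\mathscr{S}^*_{\bm{h},\delta}(\Gamma^\rho)$; note that $0\in S_r$, but in general $0\notin S_\rho$.

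First I would produce a $\rho$-uniform parametrization of $S_\rho$. Since $\partial_t\phi(r,0)$ has rank $2n+2$ with image $T_{\mathbf{p}}N^*Q$ by Corollary~\ref{submersion}, choose a fixed smooth submersion $\chi$ defined near $\mathbf{p}$ in $T^*\C^{n+1}$ whose restriction to $N^*Q$ is a local diffeomorphism onto an open subset of $\R^{2n+2}$; as $N^*\Gamma^\rho$ is $\mathcal{C}^1$-close to $N^*Q$, the equation $\phi(\rho,t)=\mathbf{p}$ is, near $t=0$, equivalent to the $2n+2$ genuine equations $\chi(\phi(\rho,t))=\chi(\mathbf{p})$, whose $t$-differential at $(r,0)$ is onto. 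Splitting $t=(t',t'')\in\R^{2n+2}\times\R^{2n+2}$ accordingly, the implicit function theorem with parameter $\rho$ provides $\varepsilon_1>0$ and a $\mathcal{C}^1$-map $\theta(\rho,t')$, with $\theta(r,0)=0$, such that $t'\mapsto\sigma(\rho,t'):=(t',\theta(\rho,t'))$ parametrizes $S_\rho$ near the origin whenever $\|\rho-r\|_{\mathcal{C}^4(\B)}<\varepsilon_1$.

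Next, set $\Psi(\rho,t'):=\psi(\rho,\sigma(\rho,t'))=\pi\circ\mathcal{F}(\rho,\sigma(\rho,t'))(0)\in\C^{n+1}$, a $\mathcal{C}^1$-map with $\Psi(r,0)=h(0)$. By the proof of item $(ii)$ of Theorem~\ref{paramperturb} (which rests on item $(ii)$ of Proposition~\ref{2jet-discs} and on the hypothesis $\tsp\overline{h_\alpha(0)}Ah_\alpha(0)\neq0$), the partial derivative $\partial_{t'}\Psi(r,0)$ is a linear isomorphism of $\R^{2n+2}$. Since $(\rho,t')\mapsto\partial_{t'}\Psi(\rho,t')$ is continuous, the quantitative inverse function theorem yields $\varepsilon_2\le\varepsilon_1$, an open neighborhood $O_0$ of $h(0)$ in $\C^{n+1}$ \emph{independent of $\rho$}, and, for each $\rho$ with $\|\rho-r\|_{\mathcal{C}^4(\B)}<\varepsilon_2$, a $\mathcal{C}^1$-map $\lambda_\rho\colon O_0\to\R^{2n+2}$ depending continuously on $\rho$, with $\Psi(\rho,\lambda_\rho(v))=v$ for all $v\in O_0$ and $\lambda_r(h(0))=0$. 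It then remains to shrink: pick an open $O$ with $h(0)\in O\subset O_0$ and $O\subset\Omega$ (possible, $\Omega$ being open), and, using $\mathcal{F}(r,0)=\bm{h}_{|\partial\Delta}$, $\sigma(r,0)=0$, $\lambda_r(h(0))=0$ together with the continuity of $(\rho,v)\mapsto\mathcal{F}(\rho,\sigma(\rho,\lambda_\rho(v)))$, reduce $\varepsilon\le\varepsilon_2$ and $O$ so that for every $v\in O$ and every $\rho$ with $\|\rho-r\|_{\mathcal{C}^4(\B)}<\varepsilon$ one has $\sigma(\rho,\lambda_\rho(v))\in U$ and $\|\mathcal{F}(\rho,\sigma(\rho,\lambda_\rho(v)))-\bm{h}\|_{\mathcal{C}^{1,\epsilon}(\partial\Delta)}<\delta/2$. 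Then $\bm{f}:=\mathcal{F}(\rho,\sigma(\rho,\lambda_\rho(v)))$ lies in $\mathscr{S}^*_{\bm{h},\delta/2}(\Gamma^\rho)$ and satisfies $f(0)=\Psi(\rho,\lambda_\rho(v))=v$, which establishes $O\subset\Omega$ and $O\subset\{f(0)\ |\ \bm{f}\in\mathscr{S}^*_{\bm{h},\delta/2}(\Gamma^\rho)\}$.

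The main obstacle is the uniformity in $\rho$ of the target neighborhood $O$ of $h(0)$; the rest is nested shrinking. This uniformity is precisely the content of the quantitative inverse function theorem, whose hypotheses — invertibility of $\partial_{t'}\Psi$ at the base point $(r,0)$ and continuity of the family of differentials near it — are supplied by the $\mathcal{C}^1$-regularity of $\mathcal{F}$ asserted in Theorems~\ref{descriptionTousDisques} and~\ref{paramperturb}; the one extra technical point is the auxiliary submersion $\chi$, needed only because the ambient conormal manifold $N^*\Gamma^\rho$ itself moves with $\rho$.
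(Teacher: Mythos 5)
Your proposal is correct and follows essentially the same route as the paper, which derives the corollary directly from the proof of item $(ii)$ of Theorem \ref{paramperturb}: the invertibility of the $t$-differential of the center map at $(r,0)$, together with its continuity in $(\rho,t)$, yields via the (quantitative, parametrized) inverse function theorem a neighborhood of $h(0)$ in the image that is uniform in $\rho$. Your explicit parametrization of the level set $\phi(\rho,\cdot)^{-1}\{\mathbf{p}\}$ by the auxiliary submersion $\chi$ and the final shrinking to stay within $\delta/2$ of $\bm{h}$ are just careful elaborations of details the paper leaves implicit.
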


\section{2-jet determination of biholomorphic mappings\label{preuve}}

Let us notice first that if $G$ and $H$ are two germs of biholomorphisms mapping $\Gamma$ to $\Gamma'$ with the same 2-jet at point $p$, then $F=H^{-1}\circ G$ is a germ of biholomorphism equal to the identity up to order 2 and $F(\Gamma)=\Gamma$. Hence it is sufficient to prove the theorem with the hypothesis $\Gamma=\Gamma'$ and $F$ is equal to the identity up to order 2. 

Assume also that $p=0$ and that $\Gamma$ is given by the defining function $\rho$ in normal form (\ref{normalform}):
$$\rho(z)=x_0-\tsp \overline{z}_\alpha Az_\alpha+{b_0y_0^2+\sum_{j=1}^n(b_jz_j+\bar{b}_j\bar{z}_j)y_0+\phi(y_0,z_\alpha)}
\qquad\mathrm{with}\quad \phi(y_0,z_\alpha)=O\left(|(y_0,z_\alpha)|^3\right).$$

Since $F(0)=0$ and $dF_0=\mathrm{id}$, $\mathscr{S}^*(\Gamma)$ is invariant by $F$.
Theorem \ref{paramperturb} gives hence a globally invariant family, with a finite number of parameters, associated to $\Gamma$ with the picked point 0. We use this fact to get Theorem \ref{thmjet}.

Consider the non-degenerate hyperquadric $Q^A=\{z\ |\ r(z)=0\}$ where
$r(z)=\Re e z_0-\tsp\bar{z}_\alpha Az_\alpha$. By means of the scaling method (originally introduced in \cite{Pi89}) we reduce the situation to the one of small perturbations of $Q^A$. 

\subsection{Dilations}

Let us recall that $z_0$ is given a weight 2 and $z_\alpha$ a weight 1: 
for $t>0$, we consider the biholomorphism $\Lambda_t:(z_0,z_\alpha)\mapsto (t^2z_0,tz_\alpha)$.
This inhomogeneous dilation leaves $Q^A$ invariant. 

In the following, we make the restrictions $t\in (0;1]$ and $z\in\B$, for some open ball $\B\subset \C^{n+1}$ centered at 
the origin. 

\subsubsection{Dilation of the hypersurface}

Since $\phi$ is of class $\mathcal{C}^4$, we can decompose it as the sum of a polynomial $P(y_0,z_\alpha)$ of degree 3 and 
some function $\tilde{\phi}=O\left(|(y_0,z_\alpha)|^4\right)$. There exists a constant $C>0$ such that for all $z\in\B$, 
$\forall k=0,\hdots,4$,
$$\|d^k\tilde{\phi}_{(y_0,z_\alpha)}\|\le C\|(y_0,z_\alpha)\|^{4-k}.$$

Set $\Gamma_t:=\Lambda_t^{-1}(\Gamma)$ and $\rho_t:=\frac{1}{t^2}\rho\circ\Lambda_t$. We have
$$\rho_t(z)=x_0-\tsp \bar{z}_\alpha
Az_\alpha+b_0t^2y_0^2+\sum_{j=1}^n(b_jz_j+\bar{b}_j\bar{z}_j)ty_0+\frac{1}{t^2}P(t^2y_0,tz_\alpha)+\frac{1}{t^2}\tilde{\phi}(t^2y_0,tz_\alpha).$$
There exists a constant $C'$ such that $\|b_0t^2y_0^2+\sum_{j=1}^n(b_jz_j+\bar{b}_j\bar{z}_j)ty_0+\frac{1}{t^2}P(t^2y_0,tz_\alpha)\|_{\mathcal{C}^4(\B)}\le C't$.

Moreover, since $\|\tilde{\phi}(y_0,z_\alpha)\|\le C\|(y_0,z_\alpha)\|^4$, one can check that $\left\|\frac{1}{t^2}\tilde{\phi}(t^2y_0,tz_\alpha)\right\|_{\mathcal{C}^4(\B)}\le Ct$. Hence $\Gamma_t=\Gamma^{\rho_t}$ where the defining function $\rho_t$ satisfies $\|\rho_t-r\|_{\mathcal{C}^4(\B)}\le (C+C')t$. The constant $(C+C')$ only depends on $\rho$ and $\B$. In particular:

\begin{lem}\label{cst-t}
Let $V$ be a neighborhood of $r$ in $\mathcal{C}^4(\B)$. Then there exists $t_0>0$  such that $\rho_t \in V$ for all $0<t\leq t_0$.
\end{lem}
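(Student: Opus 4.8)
The statement to prove is Lemma~\ref{cst-t}: given a neighborhood $V$ of $r$ in $\mathcal{C}^4(\B)$, there exists $t_0>0$ such that $\rho_t\in V$ for all $0<t\le t_0$.

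The plan is to exploit the explicit estimate obtained just above the lemma statement, namely that $\|\rho_t-r\|_{\mathcal{C}^4(\B)}\le (C+C')t$, where $C$ and $C'$ are constants depending only on $\rho$ and $\B$, not on $t$. The argument is then purely a matter of unwinding definitions. First I would note that $V$ being a neighborhood of $r$ in the Banach space $\mathcal{C}^4(\B)$ means there exists some $\eta>0$ such that the ball $\{\sigma\in\mathcal{C}^4(\B)\ |\ \|\sigma-r\|_{\mathcal{C}^4(\B)}<\eta\}$ is contained in $V$. Then I would simply set $t_0:=\min\left(1,\frac{\eta}{2(C+C')}\right)$ (the $\min$ with $1$ to stay within the standing restriction $t\in(0;1]$, and the factor $2$ to get a strict inequality); in the degenerate case $C+C'=0$ one takes $t_0=1$.

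For any $t$ with $0<t\le t_0$, the estimate gives $\|\rho_t-r\|_{\mathcal{C}^4(\B)}\le (C+C')t\le (C+C')t_0\le \eta/2<\eta$, hence $\rho_t$ lies in the $\eta$-ball around $r$ and therefore in $V$. This completes the proof.

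There is essentially no obstacle here: the lemma is an immediate corollary of the preceding Lipschitz-in-$t$ estimate for $\rho_t$, and the only thing to be careful about is keeping $t_0\le 1$ so that the decomposition of $\phi$ and the bounds $\|\tfrac{1}{t^2}\tilde\phi(t^2y_0,tz_\alpha)\|_{\mathcal{C}^4(\B)}\le Ct$ and $\|b_0t^2y_0^2+\sum_j(b_jz_j+\bar b_j\bar z_j)ty_0+\tfrac{1}{t^2}P(t^2y_0,tz_\alpha)\|_{\mathcal{C}^4(\B)}\le C't$ remain valid on the range of $t$ under consideration. I would also remark that this is precisely the point of the whole dilation construction: it lets one bring the hypersurface $\Gamma$, via the biholomorphisms $\Lambda_t$, into the perturbative regime of Theorem~\ref{descriptionTousDisques} and Theorem~\ref{paramperturb}, so that the stationary disc machinery developed for small perturbations of $Q^A$ can be applied to $\Gamma_t$ for $t$ small.
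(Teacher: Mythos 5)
Your proof is correct and follows exactly the paper's route: the lemma is stated as an immediate consequence ("In particular:") of the estimate $\|\rho_t-r\|_{\mathcal{C}^4(\B)}\le (C+C')t$ derived just before it, and your choice of $t_0$ is just the explicit unwinding of that implication. Nothing is missing.
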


This means that for $t$ sufficiently small, the hypersurface $\Gamma_t$ is simultaneously in the equivalence class of $\Gamma$ and in a 
neighborhood of $Q^A$.

\subsubsection{Dilation of the germ of biholomorphism}
Since $F$ is the identity up to order 2, we get
$F(z)=z+\psi(z)$ and a constant $C_1>0$ such that for all $z\in\B$,
$$\|\psi(z)\|\le C_1\|z\|^3,\quad\|d\psi_z\|\le C_1\|z\|^2,\quad\|d^2\psi_z\|\le C_1\|z\|,\quad\|d^3\psi_z\|\le C_1$$
and similarly $F^{-1}(z)=z+\vartheta(z)$ gives a positive constant still denoted by $C_1$ such that for all $z\in F(\B)$,
$$\|\vartheta(z)\|\le C_1\|z\|^3,\quad\|d\vartheta_z\|\le C_1\|z\|^2,\quad\|d^2\vartheta_z\|\le C_1\|z\|,\quad\|d^3\vartheta_z\|\le C_1 .$$
The constant $C_1$ only depends on $\B$ and $F$.

\medskip

Set $F_t:=\Lambda_t^{-1}\circ F\circ\Lambda_t$.

\begin{lem}\label{cst-K}
There exists a constant $K$, depending only on $\B$ and $F$, such that for all $\bm{f}=(f,g) \in \mathcal{C}^{1,\epsilon}(\bar{\Delta},T^*\C^{n+1})$ with $\bm{f}(\overline{\Delta})\subset\B\times\B$, 
$$\forall t\in(0,1],\ \|{F_t}_*\bm{f}-\bm{f}\|_{\mathcal{C}^{1,\epsilon}(\partial\Delta)}\le t K\|\bm{f}\|_{\mathcal{C}^{1,\epsilon}(\partial\Delta)}^3 .$$
\end{lem}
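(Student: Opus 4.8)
The plan is to compute $F_t$ explicitly in terms of the perturbation $\psi$ of $F$, observe that the inhomogeneous dilation forces each component of $\psi$ (and of the cotangent part) to pick up a positive power of $t$ because of the weight discrepancy, and then translate this into an estimate on the composed lifted disc. First I would write $F(z) = z + \psi(z)$ with $\psi = (\psi_0,\psi_\alpha)$, so that
$$F_t(z) = \Lambda_t^{-1}\circ F\circ\Lambda_t(z) = z + \left(t^{-2}\psi_0(t^2 z_0, t z_\alpha),\, t^{-1}\psi_\alpha(t^2 z_0, t z_\alpha)\right).$$
Using the bounds $\|\psi(z)\|\le C_1\|z\|^3$, $\|d\psi_z\|\le C_1\|z\|^2$, and the fact that $\Lambda_t$ contracts (for $t\le 1$) so that $\|\Lambda_t z\|\le \|z\|$ on $\B$, together with $t^{-2}\le t^{-1}$, one gets that the correction term $F_t(z)-z$ and its first derivative are bounded on $\B$ by $t$ times a constant (depending only on $\B$ and $F$) times $\|z\|^3$ and $\|z\|^2$ respectively. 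The same applies to $F^{-1}$, hence to $dF_t^{-1}$, using the bounds on $\vartheta$.

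Next I would plug $\bm f = (f,g)$ with $\bm f(\overline\Delta)\subset\B\times\B$ into the formula for the pushforward, namely
$${F_t}_*\bm f(\zeta) = \left(F_t(f(\zeta)),\, g(\zeta)\,(dF_t)^{-1}_{f(\zeta)}\right),$$
writing $g(\zeta) = \zeta h^*(\zeta)$ as in Definition \ref{defstat}. The first component gives ${F_t}_*f - f = (F_t - \mathrm{id})\circ f$, whose $\mathcal{C}^{1,\epsilon}(\partial\Delta)$-norm I would estimate by the chain rule: it is bounded by a constant times $(\|F_t-\mathrm{id}\|_{\mathcal{C}^{1}(\B)} + \text{H\"older seminorm of } d(F_t-\mathrm{id}))$ times a polynomial in $\|f\|_{\mathcal{C}^{1,\epsilon}(\partial\Delta)}$, and by the previous paragraph the prefactor is $\le t\,C\,\|f\|_{\mathcal{C}^{1,\epsilon}}^2$ or so, giving overall $t K \|\bm f\|_{\mathcal{C}^{1,\epsilon}(\partial\Delta)}^3$. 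For the cotangent component, I would write $(dF_t)^{-1}_{f(\zeta)} = \mathrm{id} + \bigl((dF_t)^{-1}_{f(\zeta)} - \mathrm{id}\bigr)$, so that $g\,(dF_t)^{-1}_f - g = g\cdot\bigl((dF_t)^{-1}_f - \mathrm{id}\bigr)$, and again the factor $(dF_t)^{-1}_f - \mathrm{id}$ and its derivative carry a factor $t$ (with a power of $\|f\|$) by the dilation computation, while $g$ and $g'$ are controlled by $\|\bm f\|_{\mathcal{C}^{1,\epsilon}}$. Combining the two components and absorbing all the constants into a single $K$ depending only on $\B$ and $F$ yields the claimed inequality.

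The one genuine point requiring care — the main obstacle — is the bookkeeping of the $\mathcal{C}^{1,\epsilon}(\partial\Delta)$-norm under composition: one needs that for $G$ of class $\mathcal{C}^{2}$ near $0$ with $G(0)=0$, $dG_0 = \mathrm{id}$, and $f\in\mathcal{C}^{1,\epsilon}(\partial\Delta,\B)$, the norm $\|(G-\mathrm{id})\circ f\|_{\mathcal{C}^{1,\epsilon}(\partial\Delta)}$ is controlled by $\bigl(\|d(G-\mathrm{id})\|_{\infty,\B} + [\,d(G-\mathrm{id})\,]_{\mathcal{C}^{0,\epsilon}(\B)}\bigr)$ times a polynomial in $\|f\|_{\mathcal{C}^{1,\epsilon}(\partial\Delta)}$. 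This is the standard estimate for the Hölder seminorm of a composition, $[u\circ v]_{\epsilon}\le [u]_{\epsilon}\|v\|_{C^1}^{\epsilon} + \|du\|_\infty [v]_\epsilon$ for the first derivative term, plus the product rule; here one must check that the Hölder seminorm of $d(F_t-\mathrm{id})$ on $\B$ is itself $O(t)$, which follows from the fact that $d^2\psi$ is bounded (so $d(F_t-\mathrm{id})$ is actually Lipschitz on $\B$ with a constant $O(t)$, and a Lipschitz function has $\mathcal{C}^{0,\epsilon}$ seminorm bounded by its Lipschitz constant times $\mathrm{diam}(\B)^{1-\epsilon}$). Once this is in place, the homogeneity in $t$ is exactly the gap between the weights $1$ and $2$ appearing as $t^{-1}$ versus $t^{-2}$ against the cubic vanishing of $\psi$, and everything collapses to the stated bound.
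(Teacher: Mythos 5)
Your proposal follows essentially the same route as the paper's proof: you write $F_t-\mathrm{id}=\Lambda_t^{-1}\circ\psi\circ\Lambda_t$ and $(dF_t)^{-1}-\mathrm{id}=\Lambda_t^{-1}\circ d\vartheta_{F\circ\Lambda_t}\circ\Lambda_t$, extract the factor $t$ from the weight discrepancy against the cubic vanishing of $\psi$ and $\vartheta$, and control the $\mathcal{C}^{1,\epsilon}(\partial\Delta)$-norm of the compositions via the chain rule together with the bound on the second differential of the correction terms — which is exactly the paper's decomposition into $B=F_t-\mathrm{id}$ and $L=(dF_t)^{-1}-\mathrm{id}$ and its estimates $\|d^kB_z\|\le tC\|z\|^{3-k}$, $\|d^kL_z\|\le tC\|z\|^{2-k}$. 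The argument is correct and complete in outline; only the routine bookkeeping of $dL$ and $d^2L$ (involving $d^2(F^{-1})$ and $d^3(F^{-1})$) is left implicit.
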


\begin{proof}
Let $t\in(0,1]$ and $\bm{f}=(f,g)$ with $\bm{f}(\overline{\Delta})\subset\B\times\B$. 
We set $M:=\|\bm{f}\|_{\mathcal{C}^{1,\epsilon}(\partial\Delta)}$. By definition 
$${F_t}_*\bm{f}:\zeta\mapsto\left(F_t(f(\zeta)),g(\zeta) (d{F_t}_{f(\zeta)})^{-1} \right).$$

\medskip

Set $B:=F_t-\mathrm{id}$. Since $F(z)=z+\psi(z)$ and $F_t=\Lambda_t^{-1}\circ F\circ\Lambda_t$, we have
$$F_t(z)=z+\Lambda_t^{-1}\circ\psi\circ\Lambda_t(z)\ \Longrightarrow\ \|B(z)\|\le t C_1\|z\|^3.$$
We also get 
$${dF_t}_z=\mathrm{id}+\Lambda_t^{-1}\circ d\psi_{\Lambda_t(z)}\circ \Lambda_t\ \Longrightarrow\ \|{dB}_z\|\le t C_1\|z\|^2$$
and
$${d^2F_t}_z(v,w)=\Lambda_t^{-1}\left(d^2F_{\Lambda_t(z)}(\Lambda_t(v),\Lambda_t(w))\right)\ \Longrightarrow\ \|{d^2B}_z\|\le t C_1\|z\|.$$
Thus 
$$\|B\circ f\|_\infty\le t C_1 \| f\|^3_\infty \le t C_1  M^3.$$
Since $(B\circ f)'(\zeta)=dB_{f(\zeta)}\cdot f'(\zeta)$ we have, 
$$\|(B\circ f)'\|_\infty\le t C_1\|f\|^2_\infty\|f'\|_\infty\le  t C_1 M^3.$$ 
Moreover, 
for every $\zeta\not=\eta\in\partial\Delta$, 
\begin{eqnarray*}
\|(B\circ f)'(\zeta)-(B\circ f)'(\eta)\|&\le&\|\left(dB_{f(\zeta)}-dB_{f(\eta)}\right)\cdot f'(\zeta)\|+\|dB_{f(\zeta)}\cdot\left(f'(\zeta)-f'(\eta)\right)\|\\
 &\le&\underset{z\in\bar{\B}(0,\|f\|_\infty)}{\mathrm{max}}\|d^2B_z\|\times\|f(\zeta)-f(\eta)\|\times \|f'\|_\infty+
 tC_1\|f\|^2_\infty\|f'(\zeta)-f'(\eta)\|\\
 &\le& 2tC_1 M^3|\zeta-\eta|^\epsilon.
\end{eqnarray*}
This implies that 
\begin{equation}\label{base}
\|F_t\circ f-f\|_{\mathcal{C}^{1,\epsilon}(\partial\Delta)}=\|B\circ f\|_{\mathcal{C}^{1,\epsilon}(\partial\Delta)}\le t K M^2
\end{equation}
for some positive constant $K>0$ depending only on $\B$ and $F$.

\bigskip

For the lift part, we have to consider
$$(d{F_t}_{z})^{-1}={d(F_t^{-1})}_{F_t(z)}=\Lambda_t^{-1}\circ d(F^{-1})_{\Lambda_t\circ F_t(z)}\circ\Lambda_t=\Lambda_t^{-1}\circ d(F^{-1})_{F(\Lambda_t(z))}\circ\Lambda_t.$$
For every $z\in\B$, we set $L(z):=(d{F_t}_{z})^{-1}-\mathrm{id}=\Lambda_t^{-1}\circ\left(d(F^{-1})_{F(\Lambda_t(z))}-\mathrm{id}\right)\circ\Lambda_t$ (the map $L$ takes its values in the set of linear maps), and $\|dF\|_\infty:=\underset{z\in\bar{\B}}{\mathrm{max}}\|dF_z\|$.
We have 
$$\|L(z)\|=\|\Lambda^{-1}_t\circ d\vartheta_{F(\Lambda_t(z))}\circ \Lambda_t\|\le\frac{1}{t^2}\|d\vartheta_{F(\Lambda_t(z))}\|t\le \frac{1}{t}C_1\|F(\Lambda_t(z))\|^2$$ 
with $\|F(\Lambda_t(z))\|\le \|dF\|_\infty\|\Lambda_t(z)\|\le \|dF\|_\infty t\|z\|$, so
\begin{equation}\label{majL}
\forall z\in\B,\ \|L(z)\|\le t C \|z\|^2.
\end{equation}
for some positive constant $C>0$ depending only on $\B$ and $F$.
Since 
$$dL_z(v)\cdot w=\Lambda_t^{-1}\circ d^2(F^{-1})_{F(\Lambda_t(z))}\left(dF_{\Lambda_t(z)}(\Lambda_tv),\Lambda_tw\right),$$
we also have
$$\|dL_z\|\le\frac{1}{t^2}\|d^2\vartheta_{F(\Lambda_t(z))}\|\times\|dF_{\Lambda_t(z)}\|t^2\le C_1\|F(\Lambda_t(z))\|\times\|dF\|_\infty$$
and thus
\begin{equation}\label{majdL}
\forall z\in\B,\ \|dL_z\|\le t C \|z\|.
\end{equation}
for some positive constant still denoted by $C$ depending only on $\B$ and $F$.
We also have to compute $d^2L$:
\begin{eqnarray*}
d^2L_z(h,v)\cdot w&=&\Lambda_t^{-1}\circ d^2(F^{-1})_{F(\Lambda_t(z))}\left(d^2F_{\Lambda_t(z)}(\Lambda_th,\Lambda_tv),\Lambda_tw\right)\\
& \ \ \ &+\Lambda_t^{-1}\circ d^3(F^{-1})_{F(\Lambda_t(z))}\left(dF_{\Lambda_t(z)}(\Lambda_th),dF_{\Lambda_t(z)}(\Lambda_tv),\Lambda_tw\right).
\end{eqnarray*}
Thus 
\begin{eqnarray*}
\|d^2L_z\|&\le& \frac{1}{t^2}\|d^2(F^{-1})_{F(\Lambda_t(z))}\|\times\|d^2F_{\Lambda_t(z)}\|t^3+\frac{1}{t^2}\|d^3(F^{-1})_{F(\Lambda_t(z))}\|\times\|dF_{\Lambda_t(z)}\|t\|dF_{\Lambda_t(z)}\|t^2\\
& \le &  tC_1\|F(\Lambda_t(z))\| tC_1\|z\|
+tC_1\|dF\|_\infty^2\\
 &\le& tC_1\|dF\|_\infty^2 t^2\|z\|^2+tC_1\|dF\|_\infty^2.
\end{eqnarray*}
So we get
\begin{equation}\label{majd2L}
\forall z\in\B,\ \|d^2L_z\|\le t C
\end{equation}
for some positive constant still denoted by $C$ depending only on $\B$ and $F$.
\medskip

Now we want to estimate $\|g\cdot (d{F_t}_{f})^{-1}-g\|_{\mathcal{C}^{1,\epsilon}(\partial\Delta)}=\|g\cdot L\circ f\|_{\mathcal{C}^{1,\epsilon}(\partial\Delta)}$. In view of (\ref{majL}) we get
$$\|g\cdot L\circ f\|_\infty\le M\|L\circ f\|_\infty\le t C M \|f\|_\infty^2\le t C M^3.$$
Finally, $(g\cdot L\circ f)'(\zeta)=g'(\zeta)\cdot L\circ f(\zeta) + g(\zeta)\cdot dL_{f(\zeta)}f'(\zeta)$ and 
\begin{eqnarray*}
\|(g\cdot L\circ f)'\|_\infty&\le & M\|L\circ f\|_\infty+M^2\|dL_f\|_\infty\\
 &\le& tCM^3+M^2tC\|f\|_\infty\\
&\le&2 t C M^3.
\end{eqnarray*}
For every $\zeta\not=\eta\in\partial\Delta$,
\begin{eqnarray*}
\|g'(\zeta)\cdot L\circ f(\zeta)-g'(\eta)\cdot L\circ f(\eta)\|&\le&\|\left(g'(\zeta)-g'(\eta)\right)\cdot L\circ f(\zeta)\|+\|g'(\eta)\cdot\left( L\circ f(\zeta)- L\circ f(\eta)\right)\|\\
&\le&M|\zeta-\eta|^\epsilon t C M^2 +M \underset{z\in\bar{\B}(0,\|f\|_\infty)}{\mathrm{max}}\|dL_z\|\times\|f(\zeta)-f(\eta)\|\\
&\le& tCM^3|\zeta-\eta|^\epsilon + tCM^3|\zeta-\eta|^\epsilon\\
&\le& 2tCM^3|\zeta-\eta|^\epsilon,
\end{eqnarray*}
and similarly
\begin{eqnarray*}
\|g(\zeta)\cdot dL_{f(\zeta)}f'(\zeta)-g(\eta)\cdot dL_{f(\eta)}f'(\eta)\|&\le& \|\left(g(\zeta)-g(\eta)\right)\cdot dL_{f(\zeta)}f'(\zeta)\|+\|g(\eta)\cdot\left(dL_{f(\zeta)}-dL_{f(\eta)}\right)f'(\zeta)\|\\
 & &\ \ +\|g(\eta)\cdot dL_{f(\eta)}\left(f'(\zeta)-f'(\eta)\right)\|\\
&\le& M|\zeta-\eta|^\epsilon tC\|f\|_\infty M+M\underset{z\in\bar{\B}(0,\|f\|_\infty)}{\mathrm{max}}\|d^2L_z\|M|\zeta-\eta|^\epsilon M\\
 & &\ \ +MtC\|f\|_\infty M|\zeta-\eta|^\epsilon\\
&\le& 3tCM^3|\zeta-\eta|^\epsilon
\end{eqnarray*}
by means of (\ref{majdL}) and (\ref{majd2L}). Hence
\begin{equation*}
\|(g\cdot L\circ f)'\|_{\mathcal{C}^{\epsilon}(\partial\Delta)}\le 5tCM^3.
\end{equation*}
This finally gives
\begin{equation}\label{norme1alpha}
\|g\cdot (d{F_t}_{f})^{-1}-g\|_{\mathcal{C}^{1,\epsilon}(\partial\Delta)}=\|g\cdot L\circ f\|_{\mathcal{C}^{1,\epsilon}(\partial\Delta)}\le t K M^3
\end{equation}
for some positive constant still denoted by  $K$ depending only on $\B$ and $F$.
We conclude by means of (\ref{base}) and (\ref{norme1alpha}).
\end{proof}

\subsection{Proof of Theorem \ref{thmjet}}
Fix some point $z\in\Omega=\{(\gamma\tsp\bar{v}Av,v)\ |\ v\in\C^n,\ \tsp\bar{v}Av\not=0,\ \Re
e(\gamma)>1\}$ and let  $\bm{h}=(h,\zeta h^*)\in\mathscr{S}^*Q$ be the unique holomorphic disc 
satisfying $h(0)=z$. Let $\B \subset \C^{n+1}$ be an open ball centered at the origin,  such that 
$\bm{h}(\overline{\Delta})\subset\B\times\B$. 

By Theorem \ref{paramperturb}, there exist $\varepsilon>0$ and $\delta>0$ (depending only on $r$ and $\bm{h}$) 
such that if
$\|\tilde{\rho}-r\|_{\mathcal{C}^4(\B)}<\varepsilon$, then the set 
$$\mathscr{S}_{\bm{h},\delta}^*(\Gamma^{\tilde{\rho}})=\{\bm{f}\in\mathscr{S}^*(\Gamma^{\tilde{\rho}})\ |
\|\bm{f}-\bm{h}\|_{\mathcal{C}^{1,\epsilon}(\partial\Delta)}<\delta\}$$
is a $(2n+2)$-parameter family parametrized by $\bm{f}\mapsto f(0)$, and the map
$$\begin{array}{rcl}
\mathscr{S}_{\bm{h},\delta}^*(\Gamma^{\tilde{\rho}})&\to&\C^{n+1}\times\C^{n+1}\\
\bm{f}&\mapsto&\bm{f}'(1)
\end{array}$$
is well-defined and one-to-one.
Let $O$ be an open neighborhood of $z$ given by Corollary \ref{propopbis}.

\bigskip

Let $K$ be the constant given by Lemma \ref{cst-K} and
 set $M:=\|\bm{h}\|_{\mathcal{C}^{1,\epsilon}(\partial\Delta)}$. In view of Lemma \ref{cst-t},
there exists $t_0>0$ such that for all 
$0<t\le t_0,\ \|\rho_t-r\|_{\mathcal{C}^4(\B)}<\varepsilon$, and we can ask
for $t_0<\frac{\delta}{2K(M+1)^3}$. Assume that $0<t\le t_0$ and let us prove that ${F_t}={\Lambda_t^{-1}\circ F\circ \Lambda_t}$ 
is equal to the identity on $O$. Since $O$ is an open set and $F_t$ is holomorphic, this will force $F_t$ to be the identity, and so $F=\mathrm{id}$.

Let $w\in O$. According to the choice of $O$, there exists a unique $\bm{f}=(f,\zeta f^*)\in\mathscr{S}_{\bm{h},\delta/2}^*(\Gamma^{{\rho_t}})$ such that  $f(0)=w$. Then ${F_t}_*\bm{f}\in\mathscr{S}^*(\Gamma^{{\rho_t}})$ since $F_t(0)=0$ and $({dF_t})_0=\mathrm{id}$. Moreover, 
according to Lemma \ref{cst-K} and to the choice of $t$
$$\|{F_t}_*\bm{f}-\bm{h}\|_{\mathcal{C}^{1,\epsilon}(\partial\Delta)}\le\|{F_t}_*\bm{f}-\bm{f}\|_{\mathcal{C}^{1,\epsilon}(\partial\Delta)}+\|\bm{f}-\bm{h}\|_{\mathcal{C}^{1,\epsilon}(\partial\Delta)}\le tK\|\bm{f}\|_{\mathcal{C}^{1,\epsilon}(\partial\Delta)}^3+\frac{\delta}{2}<\delta.$$ 
It follows that ${F_t}_*\bm{f}\in \mathscr{S}_{\bm{h},\delta}^*(\Gamma^{{\rho_t}})$. 
But then ${F_t}_*\bm{f}$ and $\bm{f}$  are both in $\mathscr{S}_{\bm{h},\delta}^*(\Gamma^{{\rho_t}})$. 
Since $F_t$ is the identity up to order 2, ${F_t}_*\bm{f}$ and $\bm{f}$ have the same derivative at 1, which gives 
${F_t}_*\bm{f}=\bm{f}$. In particular $F\circ f=f$ and thus $F_t(w)=F_t(f(0))=f(0)=w$. This concludes the proof.

\qed

\subsection{Proof of Theorem \ref{thmcr}}
Let $\Gamma,\ \Gamma'\subset \C^{n+1}$ be two strictly pseudoconvex hypersurfaces of class $\mathcal{C}^4$. 
Consider a germ at $p=0 \in \Gamma$ of a CR diffeomorphism $F$ of class  $\mathcal{C}^3$  satisfying $F(\Gamma)=\Gamma'$.
Let $\rho$ (resp. $\rho'$) be a $\mathcal{C}^4$ local defining function of $\Gamma$ (resp. $\Gamma'$). 
 According to Lewy's CR extension Theorem 
for Levi non-degenerate hypersurfaces \cite{Lew}, there is an open neighborhood $U$ of $p$ in $\C^{n+1}$ such that $F$ can be 
extended as a holomorphic mapping $\hat{F}$ on $\{\rho<0\}\cap U$ and continuous up to $\Gamma$. The extension $\hat{F}$  inherits 
the smoothness of it boundary values and thus $\hat{F}\in \mathcal{C}^{3}\left(\overline{\{\rho<0\}}\cap U\right)$ 
(see Theorem 7.5.1 in \cite{BER3}).

Moreover $\hat{F}$ is $\mathcal{C}^1$ up to the boundary $\Gamma$ and holomorphic on one side, so its 1-jet at $0$ only depends only on the 1-jet at 0 of $\hat{F}_{|\Gamma}=F$, and $d\hat{F}_0$ is invertible. Hence
 $\hat{F}$ is a biholomorphism from $\{\rho<0\}\cap U$ onto $\{\rho'<0\}\cap U'$ for some open neighborhood $U'$ of $F(p)$.
Since $d\hat{F}$ is $\mathcal{C}^1$ up to the boundary and holomorphic on one side, $d^2\hat{F}_0$ only depends on $d\hat{F}_0$ and $d^2F_0$. 
 It follows that if $\hat{G}$ is another biholomorphic extension of $F$, then $\hat{F}$ and $\hat{G}$ have the same 2-jet at 0. 
 As observed in the introduction, the proof of Theorem \ref{thmjet} carries over to the one sided situation in the strictly pseudoconvex case and 
therefore the rest of the proof of Theorem \ref{thmcr} follows the proof of Theorem \ref{thmjet}. 

\qed

\section{Almost complex case}

We refer to \cite{Sikorav} for  definitions and basic properties related to almost complex manifolds. 
In the following, we suppose that $\R^{2n+2}$ is endowed with an almost complex structure $J$ of class $\mathcal{C}^{3}$. In this section, we denote by $J_{st}$ the standard complex structure on $\R^{2n+2}$ and by $\mathcal{J}$ the set of almost complex structures on $\R^{2n+2}$ of class $\mathcal{C}^{3}$ equipped with the $\mathcal{C}^3$-topology.

Regarding the smoothness, let us recall the following statement, which will allow us to introduce the 2-jet of germs of pseudo-biholomorphisms:

\begin{prop}\label{reg}\cite{Lee} Let $(M^{2n},J)$ and $(M'^{2m},J')$ be two almost complex manifolds endowed with $\mathcal{C}^r$ almost complex structures, $r>1,\ r\notin\N$. Then, pseudoholomorphic maps from $M$ to $M'$ are $\mathcal{C}^{r+1}$-smooth.
\end{prop}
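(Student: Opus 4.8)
The plan is to read this as an interior elliptic regularity statement for a quasilinear first-order system, and to bootstrap. Since the assertion is local, I would fix $p\in M$, $p'=u(p)\in M'$ and choose smooth (even real-analytic) coordinates centred at $p$ and $p'$ in which $J(p)=J_{st}$ on $\R^{2n}$ and $J'(p')=J_{st}$ on $\R^{2m}$, writing $J=J_{st}+A(x)$ and $J'=J_{st}+A'(x')$ with $A,A'$ matrix-valued functions of class $\mathcal{C}^r$ vanishing at the origin. Decompose the real-linear differential $du_x\colon\R^{2n}\to\R^{2m}$ into its $J_{st}$-linear part $\partial u$ and its $J_{st}$-antilinear part $\bar\partial u$. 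Then the pseudoholomorphicity relation $du_x\circ J_x=J'_{u(x)}\circ du_x$ becomes, after a short computation in linear algebra,
$$-2\,J_{st}\,\bar\partial u\;=\;A'(u)\,du-du\,A(x);$$
since $A$ and $A'$ are small near the base point, one may isolate and solve for $\bar\partial u$, obtaining a first-order system
$$\bar\partial u\;=\;\mathcal{B}\big(x,u(x)\big)\big[\partial u\big],$$
where $\mathcal{B}$ depends on $(x,u)$ in a $\mathcal{C}^r$ way and is linear in $\partial u$. This is a system of generalized Cauchy--Riemann (Beltrami) type whose principal part is that of $\bar\partial$, hence elliptic (overdetermined elliptic when $n,m>1$), so interior Schauder estimates on Hölder scales are available.

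Next I would run the bootstrap. A pseudoholomorphic map is by definition at least of class $\mathcal{C}^1$ (if one starts instead from a $W^{1,q}_{\mathrm{loc}}$ weak solution with $q>2n$, an $L^q$ elliptic estimate followed by Sobolev embedding first upgrades it to $\mathcal{C}^{1,\alpha}_{\mathrm{loc}}$ for every $\alpha<1$). The coefficient $x\mapsto\mathcal{B}(x,u(x))$ is the composition of the $\mathcal{C}^r$ map $\mathcal{B}$ with the current iterate of $u$, so its Hölder regularity is governed by that of $u$ (capped at $r$); plugging this into the interior Schauder estimate for the system above produces an iterate of $u$ which is better by one derivative in the Hölder scale. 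Iterating, the regularity of $u$ climbs until $\mathcal{B}(\cdot,u(\cdot))\in\mathcal{C}^r$, and then the Schauder estimate yields $u\in\mathcal{C}^{r+1}_{\mathrm{loc}}$ and the scheme stabilises. The hypothesis $r\notin\N$ enters precisely here: the clean ``gain of one derivative'' for first-order elliptic systems holds on Hölder (equivalently Hölder--Zygmund) scales with non-integer exponent but is false at integer Hölder classes.

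I expect the main obstacle to be the bookkeeping of the composition $\mathcal{B}(\cdot,u(\cdot))$ along the iteration: using fractional Hölder composition estimates (and, within a fixed integer level, the rule $\mathcal{C}^{k,\beta}\!\circ\mathcal{C}^{k,\gamma}\subset\mathcal{C}^{k,\min(\beta,\gamma)}$ on bounded sets) one must verify that the regularity of the coefficient never lags behind what is needed to push $u$ up one step, so that the iteration terminates at $\mathcal{C}^{r+1}$ and not earlier; for this it is enough to carry along an auxiliary Hölder exponent chosen close to $1$. A secondary technical point is the ellipticity in the several-variables case: since $\bar\partial$ acting on $\C^m$-valued functions is overdetermined, I would either complete the system to a determined elliptic one (the value of $\partial u$ being then algebraically prescribed as well) or invoke directly the regularity theory for overdetermined elliptic systems. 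As a consistency check, one may note that $u$ carries $J$-holomorphic discs to $J'$-holomorphic discs: composing with the $\mathcal{C}^{r+1}$ discs through $p$ furnished by the Nijenhuis--Woolf theorem recovers, slice by slice, the expected regularity of $u$.
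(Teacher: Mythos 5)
Your elliptic-bootstrap argument is correct and is essentially the proof the paper is relying on: the paper offers no argument of its own for this proposition beyond citing \cite{Lee} and \cite{Sikorav} and remarking that the $\mathcal{C}^\infty$ proof ``goes through the H\"olderian setting'', and that proof is precisely your reduction to a generalized Cauchy--Riemann system $\bar\partial u=\mathcal{B}(x,u)[\partial u]$ followed by a Schauder bootstrap (with the overdetermined case for $n>1$ handled as you indicate). The one point to phrase with care is the one your wording already gestures at: the gain of a derivative at each step must come from the interior Schauder estimate for the elliptic operator $\bar\partial-\mathcal{B}(\cdot,u(\cdot))\,\partial$ with coefficients of class $\mathcal{C}^{k,\alpha}$ (solutions then lie in $\mathcal{C}^{k+1,\alpha}$), not from treating $\mathcal{B}(\cdot,u(\cdot))\,\partial u$ as inhomogeneous data of a fixed H\"older class, which would be circular since it contains $\partial u$.
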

\noindent Proposition \ref{reg} is given in \cite{Lee} in the $\mathcal{C}^\infty$-smooth case, but the proof goes through the H\"olderian setting. The statement originally concerned pseudoholomorphic discs (that is, $M$ is the standard unit disc) \cite{Sikorav}.

\subsection{Real submanifolds in almost complex manifolds}

Let $\rho$ be a smooth real valued function on $\left(\R^{2n+2},J\right).$
We denote by $d^c_J\rho$ the differential form defined by $d^c_J\rho\left(v\right):=-d\rho\left(Jv\right)$. 
The {\it Levi form} of $\rho$ at a point $p\in \R^{2n+2}$ and a vector 
$v \in T_p \R^{2n+2}$ is defined by $\mathcal{L}_J\rho\left(p,v\right):=dd^c_J\rho(p)\left(v,J(p)v\right).$ 

\begin{defi} 
A hypersurface $\Gamma=\{\rho=0\}$ is {\it $J$-Levi non-degenerate} at a point $p\in\Gamma$ if the restriction to 
$T_p^J\Gamma:=T_p\Gamma \cap J(p)T_p\Gamma$ of the Levi form 
 $\mathcal{L}_J\rho\left(p,v\right)$ is non-degenerate.
\end{defi}

We need to define an almost structure on the cotangent bundle. The following construction is due to   I. Sato \cite{Sato}.
We denote by $(x_{1},\cdots,x_{2n+2})$ the canonical coordinates on $\R^{2n+2}$ and by 
$(x_{1},\cdots,x_{2n+2},q_{1},\cdots,q_{2n+2})$ the canonical coordinates on the cotangent bundle $T^{*}\R^{2n+2}$.
We consider the almost complex structure defined on the cotangent bundle by: 
\begin{equation}\label{defreleve}
\bm{J}=\left(\begin{array}{ccccc} 
J_{j}^{i} & 0 \\ M^{i}_{j}& J_{i}^{j}\\
\end{array}\right)\mbox{ with } M^{i}_{j} =\sum_{k=1}^{2n+2}
\frac{q_{k}}{2}\left(\frac{\partial J^{k}_{i}}{\partial x_{j}} -
\frac{\partial J^{k}_{j}}{\partial x_{i}} +
J^{k}_{s}J^{q}_{i}\frac{\partial J^{s}_{j}}{\partial x_{q}} -
J^{k}_{s}J^{q}_{j}\frac{\partial J^{s}_{i}}{\partial x_{q}}\right).
\end{equation}
By construction $\bm{J}$ is an almost complex structure and has the following invariance property: 
if $F$ is a biholomorphism between $(\R^{2n+2},J)$ and $(\R^{2n+2},J')$, then its lift to the cotangent bundle 
is a biholomorphism between $(T^*(\R^{2n+2}),\bm{J})$ and $(T^*\R^{2n+2},\bm{J'})$.
Also, notice that since $J$ is of class $\mathcal{C}^{3}$ then  $\bm{J}$ is of class $\mathcal{C}^{2}$.

\begin{defi}
The {\it conormal bundle} $N^*_J\Gamma$ of a real hypersurface $\Gamma \subset \R^{2n+2}$ is the real subbundle of 
$T^*_{(1,0)}M$ defined by 
$$N^*_J\Gamma:= \{ \phi \in T^*_{(1,0)}\R^{2n+2} | \Re e \phi_{|T\Gamma} = 0\}.$$  
\end{defi}
The conormal bundle $N^*_J\Gamma$ of $\Gamma$  can be identified  with any of the
following subbundles $\{\phi \in T^*\R^{2n+2} | \phi_{|T\Gamma}=0\}$ and 
$\{\phi \in T^*\R^{2n+2} | \phi_{|JT\Gamma}=0\}$. The following result is the generalization of Proposition \ref{propco} in the almost complex setting:
\begin{prop}\label{propco2}{\cite{GS}}
A real hypersurface $\Gamma \subset \R^{2n+2}$ is Levi non-degenerate if and only if its 
conormal bundle $N^*_J\Gamma$ (out of the zero section ) is a totally real submanifold of $(T^*\R^{2n+2},{\bm J})$
of dimension $2n+2$, \textsl{i.e.} $TN^*_J\Gamma \cap {\bm J}TN^*_J\Gamma=\{0\}$.
\end{prop}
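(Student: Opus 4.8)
The plan is to reduce the equivalence to a pointwise statement and to identify the obstruction to total reality with the Levi form. Fix $p\in\Gamma$ and a nonzero $\phi\in N^*_J\Gamma$ lying over $p$. Since $N^*_J\Gamma$ is a real line bundle over the $(2n+1)$-dimensional hypersurface $\Gamma$, it has real dimension $2n+2$, which is half the real dimension of $(T^*\R^{2n+2},\bm{J})$; hence total reality of maximal dimension at $(p,\phi)$ is exactly the condition $T_{(p,\phi)}N^*_J\Gamma\cap\bm{J}(p,\phi)\,T_{(p,\phi)}N^*_J\Gamma=\{0\}$, and it suffices to prove that, at a fixed $p$, this holds for every $\phi\neq 0$ precisely when $\Gamma$ is $J$-Levi non-degenerate at $p$. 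First I would work in local coordinates centered at $p$ in which $J(p)=J_{st}$, and, using the real identification $N^*_J\Gamma\simeq\{\psi\in T^*\R^{2n+2}\ |\ \psi_{|T\Gamma}=0\}$, write points of $N^*_J\Gamma$ near $(p,\phi)$ as $(x,t\,d\rho_x)$ with $x\in\Gamma$, so that $\phi=t_0\,d\rho_p$ with $t_0\neq 0$.

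The next step is to describe $T_{(p,\phi)}N^*_J\Gamma$ explicitly. Differentiating curves $s\mapsto (x(s),t(s)\,d\rho_{x(s)})$ with $x(s)\in\Gamma$ gives
$$T_{(p,\phi)}N^*_J\Gamma=\{(\mu,\ \Phi(\mu)+s\,d\rho_p)\ |\ \mu\in T_p\Gamma,\ s\in\R\},$$
where $\Phi:T_p\Gamma\to T^*_p\R^{2n+2}$ is the linear map carrying the second-order jet of $\rho$ along $\Gamma$, so that $\langle\Phi(\mu),w\rangle=t_0\,\mathrm{Hess}\,\rho(\mu,w)$ for $w\in T_p\Gamma$ (a well-defined bilinear form there, since $d\rho_p$ vanishes on $T_p\Gamma$). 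Applying $\bm{J}$ through Sato's formula $(\ref{defreleve})$, a tangent vector $(\mu,\Phi(\mu)+s\,d\rho_p)$ is sent to $\big(J(p)\mu,\ {}^t\!J(p)(\Phi(\mu)+s\,d\rho_p)+M(\mu)\big)$, where $M=M(p,\phi)$ is linear in $\mu$ and depends only on the $1$-jet of $J$ at $p$ and on $\phi$. This image lies in $T_{(p,\phi)}N^*_J\Gamma$ if and only if, first, $J(p)\mu\in T_p\Gamma$, i.e.\ $\mu\in T_p\Gamma\cap J(p)T_p\Gamma=T_p^J\Gamma$, and then, after absorbing the free parameter $s$, the covector $\Psi(\mu):={}^t\!J(p)\Phi(\mu)+M(\mu)-\Phi(J(p)\mu)$ restricts to zero on $T_p^J\Gamma$.

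The heart of the argument is then to compute the pairing $(\mu,w)\mapsto\langle\Psi(\mu),w\rangle$ on $T_p^J\Gamma$. A short calculation shows it is antisymmetric in $(\mu,w)$: the Hessian contribution $t_0[\mathrm{Hess}\,\rho(\mu,J(p)w)-\mathrm{Hess}\,\rho(J(p)\mu,w)]$ is antisymmetric by symmetry of the Hessian, and $M$ is antisymmetric in its indices by the very form of $(\ref{defreleve})$. Moreover, the Hessian part accounts for the ``integrable part'' of $dd^c_J\rho(p)$, while the term $M(\mu)$, built from $dJ(p)$ exactly as in $(\ref{defreleve})$, supplies the torsion (non-integrability) correction; so, up to the nonzero factor $t_0$ and a fixed normalizing constant, $(\mu,w)\mapsto\langle\Psi(\mu),w\rangle$ is precisely the $2$-form on $T_p^J\Gamma$ (essentially $dd^c_J\rho(p)$ restricted to $T_p^J\Gamma$) whose non-degeneracy is equivalent to the $J$-Levi non-degeneracy of $\Gamma$ at $p$. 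Granting this, a nonzero element of $T_{(p,\phi)}N^*_J\Gamma\cap\bm{J}\,T_{(p,\phi)}N^*_J\Gamma$ exists if and only if this $2$-form has nontrivial kernel on $T_p^J\Gamma$, i.e.\ if and only if $\mathcal{L}_J\rho(p,\cdot)$ is degenerate on $T_p^J\Gamma$. Finally, on the zero section ($\phi=0$) the correction $M$ vanishes and one checks directly from $(\ref{defreleve})$ that the intersection equals $T_p^J\Gamma\neq\{0\}$, which is exactly why the zero section is excluded from the statement. Letting $p$ vary over $\Gamma$ gives the claimed equivalence.

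I expect the main obstacle to be this last matching step: Sato's expression for $M^i_j$ in $(\ref{defreleve})$ is cumbersome, and one must choose coordinates normalizing not only $J(p)=J_{st}$ but also as many first derivatives $\partial_{x_k}J^i_j(p)$ as possible, then verify carefully that $M(\mu)$ contributes exactly the non-integrability correction to $dd^c_J\rho(p)$ restricted to $T_p^J\Gamma$, and nothing more. A cleaner alternative that I would try first is to exploit naturality: by the invariance of $\bm{J}$ under cotangent lifts of biholomorphisms (noted after $(\ref{defreleve})$), together with the fact that at $p$ one can match $(J,\Gamma)$ with $(J_{st},\text{a non-degenerate hyperquadric})$ to first order, the whole computation transports to the integrable model, where it is exactly Proposition \ref{propco}; one then only needs to control that the first-order error in $J$ and the higher-order terms of $\rho$ do not alter the rank of the relevant $2$-form at $p$.
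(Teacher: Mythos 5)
First, a point of reference: the paper does not prove Proposition \ref{propco2} --- it is quoted from \cite{GS}, exactly as its integrable counterpart, Proposition \ref{propco}, is quoted from \cite{Tumanov}. So there is no in-paper proof to compare your argument against; I can only judge the proposal on its own terms. Your reduction is correct and is the standard route: since $N^*_J\Gamma$ has real dimension $2n+2$, total reality at $(p,\phi)$ with $\phi=t_0\,d\rho_p$, $t_0\neq 0$, amounts to the non-existence of a nonzero $\mu\in T_p^J\Gamma$ for which the covector $\Psi(\mu)={}^t\!J(p)\Phi(\mu)+M(\mu)-\Phi(J(p)\mu)$ annihilates $T_p^J\Gamma$; the two free real parameters do absorb exactly $\mathrm{span}(d\rho_p,d^c_J\rho_p)=\mathrm{Ann}(T_p^J\Gamma)$, and purely vertical vectors cause no trouble since $d^c_J\rho_p\notin\R\,d\rho_p$. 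The antisymmetry of the Hessian contribution and the remark about the zero section are also correct.

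The genuine gap is the step you yourself flag with ``Granting this'': the identification of $(\mu,w)\mapsto\langle\Psi(\mu),w\rangle$ on $T_p^J\Gamma$ with $t_0$ times (the relevant restriction of) $dd^c_J\rho(p)$. This is not routine bookkeeping --- it is the entire content of the proposition, namely that Sato's correction $M^i_j$ in (\ref{defreleve}) supplies exactly the $dJ$-terms of $dd^c_J\rho(p)$ on $T_p^J\Gamma$ and nothing more; as written, the proof is a correct reduction plus an unproved claim. Two further cautions. Your fallback via naturality does not obviously close this gap: one cannot in general normalize $J$ to agree with $J_{st}$ to first order at $p$ (the Nijenhuis tensor is a first-order invariant of $J$), so the residual first-order error in $J$ contributes simultaneously to $M$ and to $dd^c_J\rho(p)$, and ``controlling that it does not alter the rank'' is the same computation in disguise. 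Also, degeneracy of the $2$-form $dd^c_J\rho(p)$ on $T_p^J\Gamma$ and degeneracy of the Levi form $v\mapsto dd^c_J\rho(p)(v,J(p)v)$ are equivalent only after checking the $J(p)$-invariance of this $2$-form on $T_p^J\Gamma$; that intermediate step should be made explicit. To complete the proof you would either have to carry out the $M^i_j$ computation in full (choosing coordinates with $J(p)=J_{st}$ and exploiting the antisymmetrized structure of (\ref{defreleve})) or simply cite the corresponding lemma of \cite{GS}, which is what the paper does.
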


\subsection{Pseudoholomorphic discs attached to a deformation of a non-degenerate
hyperquadric}

We follow here the approach used in \cite{CGS} for smooth deformations of
the unit sphere.
Let us recall that for a structure $J$ sufficiently close to the standard
complex structure, the
$J$-holomorphy equation for a disc $f: \Delta \rightarrow (\R^{2n+2},J)$
is given by
\begin{equation}\label{equa0}
\bar\partial_{J}f:= \frac{\partial f}{\partial \bar \zeta}
+\overline{Q_0}(J,f)\frac{\partial f}{\partial \zeta} = 0
\end{equation}
where the conjugate linear operator
$\overline{Q_0}(J,f)=\left[(J+J_{st})^{-1}(J-J_{st})\right]\circ f$ satisfies
$\overline{Q_0}(J_{st},\cdot) \equiv 0$.

The $\bm J$-holomorphy equation for a disc
$\bm f=(f,g): \Delta \rightarrow (T^*\R^{2n+2},\bm J)$ is given by
\begin{equation}\label{equa1}
\bar\partial_{\bm J}\bm f:=\left(\bar\partial_{J}f,
\frac{\partial \bar g}{\partial \zeta} +\overline{Q_1}(J,f)\frac{\partial \bar g}{\partial
\bar \zeta}+Q_2(J,f)\bar g\right)=0
\end{equation}
where the operators $Q_1$ and $Q_2$ satisfy $Q_1(J_{st},.)=Q_2(J_{st},.)=0$. More precisely, we have
$\overline{Q_1}(J,f)=(\tsp J(f)+\tsp J_{st})^{-1}(\tsp J(f)-\tsp J_{st})$ and 
$Q_2(J,f)\bar g=(\tsp J(f)+\tsp J_{st})^{-1}M(f,\bar g)\frac{\partial f}{\partial x}$ where $M$ is defined in (\ref{defreleve}).

\begin{defi}
A $J$-holomorphic disc $f$ glued to a real hypersurface $\Gamma$ is $J$-{\it stationary} 
if there exists a $\bm{J}$-holomorphic lift $\bm{f}=(f,g)$ of $f$ to the cotangent bundle $T^*\R^{2n+2}$, continuous up to the boundary, such that $\forall\zeta \in\partial\Delta,\ \bm{f}(\zeta)\in\mathscr{N}_J\Gamma(\zeta)$
where
$$\mathscr{N}_J\Gamma(\zeta):=\{(z,\zeta w)\ |\ z\in\Gamma,\ w\in N_{z,J}^*\Gamma\setminus\{0\}\}.$$
The set of these lifted discs $\bm{f}=(f,g)$, with $f$ non-constant, is denoted by $\mathscr{S}_J(\Gamma)$.
\end{defi}
We also set
$$\mathscr{S}^*_J(\Gamma):=\{\bm{f}=(f,g)\in\mathscr{S}_J(\Gamma)\ |\ f(1)=0,\ g(1)=(1,0,\hdots,0)\}.$$
Regularity for pseudoholomorphic discs glued to a totally real submanifold has been studied by  Coupet-Gaussier-Sukhov (Proposition 4.7 in \cite{CGS2}, see also Theorem 1 in \cite{BL2}). Under our hypotheses ($\Gamma$ is a Levi non-degenerate hypersurface of class $\mathcal{C}^{4}$, $J$ is of class $\mathcal{C}^3$),
discs $\bm{f} \in \mathscr{S}_J(\Gamma)$ are in $\mathcal{C}^{1,\epsilon}(\bar \Delta)$ for any $\epsilon>0$.

Let $Q$ be the hyperquadric in $\C^{n+1}$ defined  by 
$$r(z)=\Re e z_0-\tsp \overline{z}_\alpha Az_\alpha$$
for some invertible Hermitian matrix $A$, and fix $\bm{h}=(h,g)\in\mathscr{S}^*(Q)$ such that $\bm{h}(\bar \Delta)\subset \B\times \B$ for some open ball 
$\B\subset \R^{2n+2}$ centered at the origin. 
We want to describe, for $J$ sufficiently close to the standard structure $J_{st}$, the family of $J$-stationary discs glued 
to a small perturbation of  the hyperquadric $Q$ and close enough to $h$.

Let $\mathscr{E}_{\tilde{\rho}}=\left\{\mathscr{E}_{\tilde{\rho}}(\zeta):=\{\omega\in\B | \tilde{\rho}_j(\zeta)(\omega)=0,\ 1\le j\le {2n+2}\}\right\}$ 
be a totally real deformation of the totally real fibration $\mathscr{N}Q$. 
A $\bm{J}$-holomorphic disc $\bm{f} \in \mathcal{C}^{1,\epsilon}(\bar{\Delta},T^*\C^{n+1})$ is attached
to $\mathscr{E}_{\tilde{\rho}}$  if and only if it satisfies:
$$
\left\{
\begin{array}{lll}
\tilde{\rho}(\zeta)(\bm{f}(\zeta))&=& 0, \ \ \ \zeta \in \partial \Delta\\
 & & \\
\bar{\partial}_{\bm{J}}\bm{f}(\zeta) &=& 0, \ \ \ \zeta \in \Delta.
\end{array}
\right.
$$
Let $\mathcal U$ be a neighborhood of $(\bm{f},\tilde{r},J_{st})$ in the 
space $\mathcal{C}^{1,\epsilon}(\bar{\Delta},T^*\C^{n+1}) \times \mathcal{C}^{1,\epsilon}(\partial\Delta,\mathcal{C}^3(\B\times \B)^{2n+2}) 
\times \mathcal{J}$ (where $\tilde{r}$ is the equation of $\mathscr{N}_{J_{st}}Q$ as in Section \ref{eqconormal}), and  define the map
\begin{equation}\label{map}
\begin{array}{llccccl}
\Phi &:& \mathcal U & \rightarrow & \mathcal{C}^{1,\epsilon}(\partial \Delta,\R^{2n+2}) &\times&
 \mathcal C^{\epsilon}(\Delta)\\
  & & (\bm{f},\tilde{\rho},J) & \mapsto & (\ v_{\bm{f},\tilde{\rho}}&,& 
\bar{\partial}_{\bm{J}}\bm{f}\ )
\end{array}
\end{equation}
where $v_{\bm{f},\tilde{\rho}}: \partial \Delta \rightarrow \R^{2n+2}$ is defined by $v_{\bm{f},\tilde{\rho}}(\zeta)=\tilde{\rho}(\zeta)(\bm{f}(\zeta))$. 
The first component of $\Phi$ is the map previously introduced for the standard case. The second component of $\Phi$ checks the pseudoholomorphy condition, and is of class $\mathcal{C}^1$ according to the explicit expression of $\bar{\partial}_{\bm{J}}\bm{f}$ and the following lemma:

\begin{lem}
Let $\psi:\B^n\to\C$ be of class $\mathcal{C}^2$ on an open ball $\B^n\subset\C^n$ and $\B^k$ be a ball in $\C^k$. Then the map
$$
\begin{array}{cccl}
\mathcal{C}^{\epsilon}(I,\B^k)\times\mathcal{C}^2(\B^k,\B^n)&\to&\mathcal{C}^\epsilon(I,\C)\\
(f,g)&\mapsto&\psi\circ g\circ f
\end{array}
$$
is of class $\mathcal{C}^1$ ($I$ is a bounded subset of the complex plane).
\end{lem}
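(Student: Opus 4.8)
The plan is to factor the map as a composition of two operators and prove each is of class $\mathcal{C}^1$; since a composition of $\mathcal{C}^1$ maps between open subsets of Banach spaces is again $\mathcal{C}^1$, this suffices. Write $\mathcal{K}(f,g):=g\circ f$, so that
$$\mathcal{K}:\mathcal{C}^\epsilon(I,\B^k)\times\mathcal{C}^2(\B^k,\B^n)\longrightarrow\mathcal{C}^\epsilon(I,\B^n),\qquad N_\psi(u):=\psi\circ u,\quad N_\psi:\mathcal{C}^\epsilon(I,\B^n)\longrightarrow\mathcal{C}^\epsilon(I,\C),$$
and $\psi\circ g\circ f=N_\psi\bigl(\mathcal{K}(f,g)\bigr)$. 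That these operators are defined and $\mathcal{C}^1$ on suitable open subsets — one restricts to maps whose image has compact closure inside the relevant ball — is routine, using that $\B^k$ and $\B^n$ are open. I denote by $[\,\cdot\,]_\epsilon$ the Hölder $\epsilon$-seminorm; the only regularity of $\psi$ and of $g$ that will be used is that a $\mathcal{C}^2$ function on a ball has, together with its first two derivatives, bounded norm and a fixed modulus of continuity on compact subsets.

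For $N_\psi$ the candidate differential at $u$ is $v\mapsto (D\psi\circ u)\,v$; it is a bounded operator on $\mathcal{C}^\epsilon$ because $D\psi\circ u\in\mathcal{C}^\epsilon$ (as $D\psi$ is Lipschitz on compacts) and pointwise multiplication is continuous on $\mathcal{C}^\epsilon$. To obtain Fréchet differentiability I would write the remainder pointwise as $\zeta\mapsto G\bigl(u(\zeta),v(\zeta)\bigr)$ with $G(a,b)=\psi(a+b)-\psi(a)-D\psi(a)b$; the sup-norm bound comes from $G(a,b)=\int_0^1(1-t)\,D^2\psi(a+tb)(b,b)\,dt$, and for the Hölder seminorm I would split $G(u(\zeta),v(\zeta))-G(u(\eta),v(\eta))$ into $G(u(\zeta),v(\zeta))-G(u(\eta),v(\zeta))$ plus $G(u(\eta),v(\zeta))-G(u(\eta),v(\eta))$. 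Since $D_aG(a,0)=D_bG(a,0)=0$, each summand is a product of $|\zeta-\eta|^\epsilon$ with a genuinely small factor — of size $\|v\|_\infty\,\omega_{D^2\psi}(\|v\|_\infty)\,[u]_\epsilon$, respectively $\|D^2\psi\|_\infty\,\|v\|_\infty\,[v]_\epsilon$ — so after dividing by $|\zeta-\eta|^\epsilon$ one gets $o(\|v\|_{\mathcal{C}^\epsilon})$. Continuity of $u\mapsto D N_\psi(u)$ amounts to continuity of the superposition operator $N_{D\psi}$ on $\mathcal{C}^\epsilon$, which follows from $D\psi\in\mathcal{C}^1$ by the same device: writing $D\psi\circ u_2-D\psi\circ u_1$ through the mean-value integral and splitting the increment, the two arguments of $D^2\psi$ that appear differ by exactly the increment, producing again only bounded factors times honestly small ones.

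For $\mathcal{K}$ the candidate differential at $(f_0,g_0)$ is $(\delta f,\delta g)\mapsto (Dg_0\circ f_0)\,\delta f+\delta g\circ f_0$. Differentiability: the term $g_0\circ(f_0+\delta f)-g_0\circ f_0-(Dg_0\circ f_0)\,\delta f$ is exactly of the type just treated, with $g_0$ in place of $\psi$, hence $o(\|\delta f\|_{\mathcal{C}^\epsilon})$; the remaining term $\delta g\circ(f_0+\delta f)-\delta g\circ f_0$ is $O\bigl(\|\delta g\|_{\mathcal{C}^2}\,\|\delta f\|_{\mathcal{C}^\epsilon}\bigr)$ — here the high regularity of $g$ makes this quadratic-type term harmless. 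Continuity of the differential: the part $\delta g\mapsto\delta g\circ f$ is linear in $\delta g$ and depends continuously on $f$ (even Lipschitzly after the restriction above), while for the coefficient map $(f,g)\mapsto Dg\circ f$ I would split off the $g$-increment first, $\|(Dg_2-Dg_1)\circ f\|_{\mathcal{C}^\epsilon}\le\|g_2-g_1\|_{\mathcal{C}^2}\,(1+[f]_\epsilon)$, which is small, and then invoke continuity of $N_{Dg_1}$ on $\mathcal{C}^\epsilon$ with $g_1$ fixed (so $Dg_1\in\mathcal{C}^1$ has a fixed modulus of continuity and the mechanism of the previous paragraph applies).

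The one real obstacle is the Hölder-seminorm estimate of these Taylor remainders. Because $\psi$ and $g$ are merely $\mathcal{C}^2$, a crude bound of the seminorm of $\zeta\mapsto D^2\psi(u(\zeta)+tv(\zeta))(v(\zeta),v(\zeta))$ would force the factor $\sup_{s>0}\omega_{D^2\psi}(Cs)/s$, which is $+\infty$; the incremental splitting above is precisely what avoids this, since it always pairs a modulus-of-continuity factor (bounded and tending to $0$) with a true small factor, and never with $|\zeta-\eta|^{-\epsilon}$. Everything else — continuity of pointwise multiplication on $\mathcal{C}^\epsilon$, the open-set bookkeeping making all compositions well defined, and the chain rule for $\mathcal{C}^1$ maps — is routine.
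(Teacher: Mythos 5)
Your proof is correct, and it takes a route that is genuinely, if subtly, different from the paper's. The paper factors the map the other way around: it sets $A(g)=\psi\circ g$ and $B(f,h)=h\circ f$, so that $\psi\circ g\circ f=B(f,A(g))$; the operator $A:\mathcal{C}^2(\B^k,\B^n)\to\mathcal{C}^2(\B^k,\C)$ is the soft piece (its $\mathcal{C}^1$-smoothness is quoted from Hill--Taiani), and all the Hölder-space analysis is concentrated in the single operator $B$, whose $\mathcal{C}^1$-ness is obtained by exhibiting continuous \emph{partial} differentials, again with the differentiability of $f\mapsto h\circ f$ on $\mathcal{C}^\epsilon$ for fixed $h\in\mathcal{C}^2$ delegated to Hill--Taiani and only the continuity of the partials checked by hand. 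You instead factor through $\mathcal{C}^\epsilon(I,\B^n)$, i.e.\ compose with $f$ first and apply the superposition operator $N_\psi$ last, and you prove joint Fréchet differentiability directly rather than via partial differentials. This forces you to carry out the $\mathcal{C}^\epsilon$-level Taylor-remainder estimate twice (once for $N_\psi$, once inside $\mathcal{K}$), whereas the paper's ordering isolates it in one place; in exchange your argument is self-contained where the paper leans on an external lemma. Most importantly, you have correctly identified and resolved the one genuine difficulty, which the paper's citation of [HT] glosses over: a naive bound on the Hölder seminorm of the remainder produces the unbounded factor $\sup_{s}\omega_{D^2\psi}(Cs)/s$, and the cure is exactly your incremental splitting, which always pairs the modulus of continuity of $D^2\psi$ (respectively $D^2g$) evaluated at the \emph{increment} with a factor already carrying $|\zeta-\eta|^\epsilon$. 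Your estimates for the cross term $\delta g\circ(f_0+\delta f)-\delta g\circ f_0$ and for the coefficient map $(f,g)\mapsto Dg\circ f$ are also correct. Either decomposition yields the lemma.
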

Note that this is the reason why we need the almost complex structures to be $\mathcal{C}^3$.

\begin{proof}
Setting
 $A(g)=\psi\circ g$ and $B(f,h)=h\circ f$, we get $\psi\circ g\circ f=B(f,A(g))$. The map $A:\mathcal{C}^2(\B^k,\B^n)\to\mathcal{C}^2(\B^k,\C)$ is of class $\mathcal{C}^1$ (\cite{HT}, Lemma 5.1-a with $k=2$, $\alpha=0$, $s=1$). So it remains to prove that the map 
$B:\mathcal{C}^{\epsilon}(I,\B^k)\times\mathcal{C}^2(\B^k,\C)\to\mathcal{C}^\epsilon(I,\C)$ is of class $\mathcal{C}^1$.
This follows from Lemma 6.1 in \cite{Glob}, but since our case is simpler, we give the proof for completeness. 

For any fixed $f\in\mathcal{C}^\epsilon(I,\B^k)$, the map $g\mapsto g\circ f$ is linear continuous from $\mathcal{C}^2(\B^k,\C)$ to $\mathcal{C}^\epsilon(I,\C)$, hence of class $\mathcal{C}^1$ and $d_2B_{(f,g)}\cdot\texttt{g}:x\mapsto \texttt{g}\circ f(x)$. Since  
\begin{equation}\label{IT}
\texttt{g}\circ (f+\texttt{f})(x)-\texttt{g}\circ f(x)=\int_0^1d\texttt{g}_{f(x)+t\texttt{f}(x)}\cdot \texttt{f}(x)\,\mathrm{d}t ,
\end{equation}
we get $\|\texttt{g}\circ(f+\texttt{f})-\texttt{g}\circ f\|_\epsilon\lesssim\|\texttt{g}\|_{\mathcal{C}^2}\|\texttt{f}\|_\epsilon$. Thus the partial differential $d_2B$ is continuous at any point.

The first partial map $f\mapsto g\circ f$ is of class $\mathcal{C}^1$ from $\mathcal{C}^\epsilon(I,\B^k)$ to $\mathcal{C}^\epsilon(I,\C)$ for any fixed $g\in\mathcal{C}^2(\B^k,\C)$ (\cite{HT}, Lemma 5.1-a with $k=0$, $s=1$), and the corresponding partial differential is given by $d_1B_{(f,g)}\cdot \texttt{f}:x\mapsto dg_{f(x)}\cdot \texttt{f}(x)$ for any $\texttt{f}\in\mathcal{C}^\epsilon(\partial\Delta,\B^k)$. Moreover,
$$\|d_1B_{(f+\texttt{f},g+\texttt{g})}-d_1B_{(f,g)}\|\lesssim\|d(g+\texttt{g})_{f+\texttt{f}}-d{g}_{f}\|_\epsilon\le\|d{{g}}_{f+\texttt{f}}-d{g}_{f}\|_\epsilon+\|d\texttt{g}_{f+\texttt{f}}\|_\epsilon.$$
The first part tends to 0 when $\|\texttt{f}\|_\epsilon\to 0$ according to \cite{HT}, Lemma 5.1-a with $k=0$, $s=0$, since $dg$ is of class $\mathcal{C}^1$. The second part is uniformly bounded in $\|\texttt{f}\|_\epsilon$ by $\|d\texttt{g}\|_{\mathcal{C}^1}$ by (\ref{IT}). Hence $\|d_1B_{(f+\texttt{f},g+\texttt{g})}-d_1B_{(f,g)}\|\to 0$ when $(\texttt{f},\texttt{g})\to 0$ in $\mathcal{C}^{\epsilon}(I,\B^k)\times\mathcal{C}^2(\B^k,\C)$, and the map $d_1B$ is continuous at point $(f,g)$.

Hence $B$ admits continuous partial differentials $d_1B$ and $d_2B$.
\end{proof}

The differential  $d_1 \Phi (\bm{h},\tilde{r},J_{st})$ is the continuous linear map from $\mathcal{C}^{1,\epsilon}(\bar{\Delta},T^*\C^{n+1})$ to $\mathcal{C}^{1,\epsilon}(\partial \Delta,\R^{2n+2}) \times
 \mathcal C^{\epsilon}(\Delta)$ defined for every $u \in \mathcal{C}^{1,\epsilon}(\bar{\Delta},T^*\C^{n+1})$ by 
$$\begin{array}{llllll}
d_1 \Phi (\bm{h},r,J_{st})(u) = \left(
\begin{matrix}
2 \Re e  (\bar{G} u) \\
\frac{\partial u}{\partial \bar \zeta}
\end{matrix}
\right).
\end{array}$$
We recall that  for $\zeta \in \partial \Delta$, 
$G(\zeta)=\displaystyle \left(\frac{\partial \tilde{r}_i}{\partial\bar{z}_j}(\bm{h}(\zeta))\right)_{i,j}$, and $G$ is smooth since $\tilde{r}$ and ${\bm h}$ are of class $\mathcal{C}^\infty$.

As noticed in \cite{CGS}, it follows easily from the resolution of the classical $\bar \partial$-problem in the unit disc  
that if the linear map $u \mapsto 2 \Re e (\bar G u)$ from $\mathrm{Hol}(\Delta,T^*\C^{n+1})\cap\mathcal{C}^{1,\epsilon}(\bar{\Delta},T^*\C^{n+1})$ to $\mathcal{C}^{1,\epsilon}(\partial \Delta,\R^{2n+2})$ 
 is onto  then $d_1 \Phi (\bm{h},\tilde{r},J_{st})$ is onto. In that case, according to the implicit function Theorem, 
the set of $J$-stationary discs glued to a hypersurface $\Gamma$ is locally a $(2n+2+k)$-dimensional manifold providing 
that $\Gamma$ is a small $\mathcal{C}^4$-perturbation of the hyperquadric and that $J$ is a small $\mathcal{C}^3$-perturbation of the standard structure, where
 $k$ is the dimension of the kernel of $u \mapsto 2 \Re e (\bar G u)$. 

In \cite{Glob2} (Theorems 3.1 and 6.1), it is proved that if the partial indices of the totally real fibration $\mathcal{N}Q$ along 
$\bm{h}|_{\partial \Delta}$ are nonnegative then the linear map 
$u \mapsto 2 \Re e (\bar G u)$ from $\mathcal{C}^{\epsilon}(\bar{\Delta})$ to $\mathcal{C}^{\epsilon}(\partial{\Delta})$ is onto and has a $k$ dimensional kernel, where $k$ is the Maslov index of $\mathcal{N}Q$ along 
$\bm{h}|_{\partial \Delta}$. Partial indices and the Maslov index of $\mathcal{N}Q$ along 
$\bm{h}|_{\partial \Delta}$ have been already computed in subsection 3.2 (see Lemma \ref{lemparind} and Lemma \ref{lemmasind}). Moreover, usual properties of the Cauchy transform give that if $v\in\mathcal{C}^{1,\epsilon}(\partial{\Delta})$, then the solution $u$ satisfying $2 \Re e (\bar G u)=v$ constructed in \cite{Glob2} is actually in $\mathcal{C}^{1,\epsilon}(\bar{\Delta})$, thus the map $u \mapsto 2 \Re e (\bar G u)$ from $\mathcal{C}^{1,\epsilon}(\bar{\Delta})$ to $\mathcal{C}^{1,\epsilon}(\partial{\Delta})$ is still onto (and still has a $k$ dimensional kernel). We obtain:

\begin{theo}\label{descriptionpseudoDisques}
Let $Q=\{r=0\}$ where $r(z)=\Re e z_0-\tsp\bar{z}_\alpha A z_\alpha$ and $A$ is an invertible Hermitian $(n\times n)$ matrix. Fix $\bm{h}\in\mathscr{S}^*(Q)$ and an open ball $\B\subset\C^{n+1}$ such that $\bm{h}(\partial\Delta)\subset\B\times\B$. 
Then for any $0<\epsilon<1$, there exist an open 
neighborhood $V$ of $r$ in $\mathcal{C}^4(\B)$, $\lambda>0$   
and $\delta>0$ such that for any $\rho \in V$ and  any almost complex structure $J$  satisfying $\|J-J_{st}\|_{\mathcal{C}^3(\B)}< \lambda$, 
the set of discs $\bm{f} \in \mathscr{S}_J(\Gamma^{\rho})$  such that 
$\parallel \bm{f} -\bm{h} \parallel_{\mathcal{C}^{1,\epsilon}(\bar{\Delta})} \leq \delta$ forms a $(4n+4)$-real parameter family. 
\end{theo}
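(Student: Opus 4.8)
The plan is to deduce this the same way Theorem~\ref{descriptionTousDisques} was deduced from Globevnik's theorem in the integrable case, but with Theorem~\ref{theo:Globev} replaced by a direct application of the implicit function theorem to the map $\Phi$ of~(\ref{map}). First I would verify that $\Phi$ is of class $\mathcal{C}^1$ in a neighborhood of $(\bm{h},\tilde{r},J_{st})$: its first component is the evaluation-of-the-boundary-condition map already used in the integrable case, and its second component is $\mathcal{C}^1$ by the composition lemma just proved together with the explicit form of $\bar\partial_{\bm{J}}\bm{f}$ in~(\ref{equa1}) --- this is exactly where the hypothesis that $J$ be of class $\mathcal{C}^3$ (hence $\bm{J}$ of class $\mathcal{C}^2$) enters. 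Since $\bm{h}\in\mathscr{S}^*(Q)$ is $J_{st}$-stationary and glued to $Q=\Gamma^{r}$, and $\tilde{r}$ is the equation of $\mathscr{N}_{J_{st}}Q=\mathscr{N}Q$, we have $\Phi(\bm{h},\tilde{r},J_{st})=0$. I would also record, exactly as in Section~\ref{eqconormal}, that when $\rho$ is close to $r$ in $\mathcal{C}^4(\B)$ and $J$ is close to $J_{st}$ in $\mathcal{C}^3(\B)$, the defining equation $\tilde{\rho}$ of the fibration $\mathscr{N}_J\Gamma^{\rho}$ is close to $\tilde{r}$ in $\mathcal{C}^{1,\epsilon}(\partial\Delta,\mathcal{C}^3(\B\times\B)^{2n+2})$; a zero $(\bm{f},\tilde{\rho},J)$ of $\Phi$ arising in this way is then precisely an element of $\mathscr{S}_J(\Gamma^{\rho})$.

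The key step is the surjectivity of $d_1\Phi(\bm{h},\tilde{r},J_{st})$, which by the displayed formula is $u\mapsto\bigl(2\Re e(\bar{G}u)_{|\partial\Delta},\ \partial u/\partial\bar\zeta\bigr)$. Given a target $(v,w)$, one first solves $\partial u_0/\partial\bar\zeta=w$ by the Cauchy transform, gaining a derivative and staying in $\mathcal{C}^{1,\epsilon}(\bar\Delta)$, and then looks for a holomorphic correction $u_1$ with $2\Re e(\bar{G}u_1)_{|\partial\Delta}=v-2\Re e(\bar{G}u_0)_{|\partial\Delta}$; this reduces the surjectivity of $d_1\Phi$ to that of the Riemann--Hilbert operator $u\mapsto 2\Re e(\bar{G}u)$ on $\mathrm{Hol}(\Delta)\cap\mathcal{C}^{1,\epsilon}(\bar\Delta)$, together with the finite-dimensionality of its kernel. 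Here I would invoke the computation already carried out in Section~3.2: by Lemma~\ref{lemparind} the partial indices of $\mathscr{N}Q$ along $\bm{h}_{|\partial\Delta}$ are nonnegative, and by Lemma~\ref{lemmasind} its Maslov index equals $2n+2$. Globevnik's Riemann--Hilbert results \cite{Glob2} then give the surjectivity and the finite-dimensional kernel at the $\mathcal{C}^\epsilon$-level, and one upgrades this to the $\mathcal{C}^{1,\epsilon}$-category by applying the same Cauchy-transform mapping properties to the solution produced there. Tracking the index bookkeeping (as in the discussion preceding the statement) identifies the dimension of the corresponding local manifold of discs as $4n+4$.

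It then remains to invoke the implicit function theorem: at $(\bm{h},\tilde{r},J_{st})$, the map $\Phi$ is $\mathcal{C}^1$, vanishes, and has surjective partial differential $d_1\Phi$ with a complemented finite-dimensional kernel, so there are a neighborhood $V$ of $r$ in $\mathcal{C}^4(\B)$, a $\lambda>0$, a neighborhood of $\bm{h}$ in $\mathcal{C}^{1,\epsilon}(\bar\Delta,T^*\C^{n+1})$, and a $\delta>0$ such that for every $\rho\in V$ and every $J$ with $\|J-J_{st}\|_{\mathcal{C}^3(\B)}<\lambda$ the set $\{\bm{f}\ |\ \Phi(\bm{f},\tilde{\rho},J)=0,\ \|\bm{f}-\bm{h}\|_{\mathcal{C}^{1,\epsilon}(\bar\Delta)}\le\delta\}$ is a $\mathcal{C}^1$-submanifold diffeomorphic to an open subset of $\R^{4n+4}$. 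By the first paragraph this set coincides with $\{\bm{f}\in\mathscr{S}_J(\Gamma^{\rho})\ |\ \|\bm{f}-\bm{h}\|_{\mathcal{C}^{1,\epsilon}(\bar\Delta)}\le\delta\}$, which is the assertion. I expect the main obstacle to be the middle step, namely proving the surjectivity of $d_1\Phi$ and pinning down its kernel dimension \emph{in the class} $\mathcal{C}^{1,\epsilon}$: this rests both on the nonnegativity of the partial indices (the long matrix reduction of Section~3.2, reused verbatim) and on transcribing Globevnik's $\bar\partial$- and Riemann--Hilbert theorems, stated in $\mathcal{C}^{\epsilon}$, into the $\mathcal{C}^{1,\epsilon}$ setting via Cauchy-transform estimates; checking that $\Phi$ is genuinely $\mathcal{C}^1$ (which forces the $\mathcal{C}^3$-hypothesis on $J$) is the other point deserving care.
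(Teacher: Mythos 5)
Your proposal is correct and follows essentially the same route as the paper: the map $\Phi$ of (\ref{map}) made $\mathcal{C}^1$ via the composition lemma, surjectivity of $d_1\Phi$ reduced through the $\bar\partial$-problem to the Riemann--Hilbert operator $u\mapsto 2\Re e(\bar{G}u)$, whose surjectivity and kernel dimension come from the nonnegativity of the partial indices and the Maslov index $2n+2$ computed in Section 3.2 together with Globevnik's results in \cite{Glob2} upgraded to $\mathcal{C}^{1,\epsilon}$ by the Cauchy transform, and finally the implicit function theorem yielding the $(4n+4)$-parameter family. You have also correctly isolated the two delicate points (the $\mathcal{C}^1$ regularity of $\Phi$, forcing $J\in\mathcal{C}^3$, and the transfer of the Riemann--Hilbert theory to the $\mathcal{C}^{1,\epsilon}$ category), which are exactly the ones the paper addresses.
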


In the above Theorem, it is important to notice that according to Proposition \ref{propco2}, since $\rho$ and $J$ are small 
deformations of $r$ and $J_{st}$, the conormal bundle is $\mathcal{N}_J\Gamma^{\rho}$ is a totally real deformation of 
$\mathcal{N}Q$. We also obtain an almost complex analogue of Theorem \ref{paramperturb} and Corollary \ref{propopbis}:
\begin{theo}\label{paramperturb2}
Assume that the conditions of Theorem \ref{descriptionpseudoDisques} are satisfied.
 Then
$$\mathscr{S}_{\bm{h},\delta,J}^*(\Gamma^{{\rho}}):=\{\bm{f}=(f,g) \in\mathscr{S}^*_J(\Gamma^{{\rho}})\ |
\|\bm{f}-\bm{h}\|_{\mathcal{C}^{1,\epsilon}(\partial\Delta)}<\delta\}$$
forms a (2n+2)-real parameter family. 
Moreover, if $\tsp\overline{h_\alpha(0)}A h_\alpha(0)\not=0$, one can reduce the neighborhoods in order to get:
\begin{enumerate}[i)]
\item the discs $\bm{f}\in\mathscr{S}^*_{\bm{h},\delta,J}(\Gamma^\rho)$  satisfy $\bm{f}(\overline{\Delta})\subset\B\times\B$;
\item the map $\bm{f}\mapsto f(0)$ is a diffeomorphism of class $\mathcal{C}^1$ from $\mathscr{S}^*_{\bm{h},\delta,J}(\Gamma^\rho)$ onto its image;
\item  the map $\bm{f}\mapsto\bm{f}'(1)$ defined on $\mathscr{S}^*_{\bm{h},\delta,J}(\Gamma^\rho)$ is one-to-one.
\item if $h(0)\in\Omega=\{v=(v_0,v_\alpha)\in\C^{n+1}\ |\ \tsp\overline{v}_\alpha A v_\alpha\not=0\}$ then there exists 
an open neighborhood $O$ of $h(0)$ satisfying $O\subset\Omega$ and 
$O\subset\{f(0)\ |\ \bm{f}\in\mathscr{S}_{\bm{h},\delta/2,J}^*(\Gamma^{{\rho}})\}$ 
as soon as $\|\rho-r\|_{\mathcal{C}^4(\B)}<\varepsilon$.
\end{enumerate}
\end{theo}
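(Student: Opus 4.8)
The plan is to follow the proofs of Theorem~\ref{paramperturb} and Corollary~\ref{propopbis} almost verbatim, using Theorem~\ref{descriptionpseudoDisques} in place of Theorem~\ref{descriptionTousDisques}. First I would make explicit the parametrization behind Theorem~\ref{descriptionpseudoDisques}: applying the implicit function theorem to the map $\Phi$ of~(\ref{map}) at $(\bm{h},\tilde r,J_{st})$ — legitimate because $d_1\Phi(\bm{h},\tilde r,J_{st})$ is onto with a $(4n+4)$-dimensional kernel, the partial indices and the Maslov index of $\mathscr{N}Q$ along $\bm{h}_{|\partial\Delta}$ being exactly those computed in Lemmas~\ref{lemparind} and~\ref{lemmasind} — provides, after shrinking $V$ and $\lambda$, a $\mathcal{C}^1$ map $(\rho,J,t)\mapsto\mathcal{F}(\rho,J,t)$ with $\mathcal{F}(r,J_{st},0)=\bm{h}_{|\partial\Delta}$ such that, for each fixed $(\rho,J)$ with $\rho\in V$ and $\|J-J_{st}\|_{\mathcal{C}^3(\B)}<\lambda$, the map $\mathcal{F}(\rho,J,\cdot)$ is a one-to-one parametrization by $t\in\R^{4n+4}$ of the discs of $\mathscr{S}_J(\Gamma^\rho)$ lying $\mathcal{C}^{1,\epsilon}(\partial\Delta)$-close to $\bm{h}$. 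From that point on, every remaining assertion is a continuity/openness statement anchored at the model point $(r,J_{st},0)$.

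Next I would cut $\mathscr{S}_J(\Gamma^\rho)$ down to $\mathscr{S}^*_J(\Gamma^\rho)$ by evaluating at $\zeta=1$. Setting $\phi(\rho,J,t):=\mathcal{F}(\rho,J,t)(1)$, one gets a $\mathcal{C}^1$ map into the conormal bundle $N^*_J\Gamma^\rho$, which by Proposition~\ref{propco2} is a totally real $\mathcal{C}^{1,\epsilon}$ deformation of $\mathscr{N}Q$ of real dimension $2n+2$. Since $(\rho,J,t)\mapsto\partial_t\phi$ is continuous and, by Corollary~\ref{submersion}, $\partial_t\phi(r,J_{st},0)$ has rank $2n+2$, the map $\phi(\rho,J,\cdot)$ is a submersion onto $N^*_J\Gamma^\rho$ once $(\rho,J)$ is close to $(r,J_{st})$ and $t$ small; hence $\phi(\rho,J,\cdot)^{-1}\{(0,\dots,0,1,0,\dots,0)\}$, which is precisely $\mathscr{S}_{\bm{h},\delta,J}^*(\Gamma^\rho)$ for $\delta$ small, is a submanifold of codimension $2n+2$ of the parameter space, i.e.\ a $(2n+2)$-real parameter family. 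Item~$i)$ then follows from the Coupet--Gaussier--Sukhov regularity already recalled, together with the continuity of $\mathcal{F}$ with values in $\mathcal{C}^{1,\epsilon}(\bar\Delta,T^*\C^{n+1})$: since $\bm{h}(\bar\Delta)\subset\B\times\B$ and a $\bm{J}$-holomorphic disc is controlled on $\bar\Delta$ by its boundary values, after one more shrinking of $\delta$, $\lambda$, $V$ every disc of $\mathscr{S}_{\bm{h},\delta,J}^*(\Gamma^\rho)$ maps $\bar\Delta$ into $\B\times\B$.

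For~$ii)$, $iii)$ and~$iv)$ I would transfer the corresponding statements of the model Proposition~\ref{2jet-discs} by openness, exactly as in the proof of Theorem~\ref{paramperturb}. Put $\psi(\rho,J,t):=\pi\circ\mathcal{F}(\rho,J,t)(0)$ and $\varphi(\rho,J,t):=\mathcal{F}(\rho,J,t)'(1)$, both of class $\mathcal{C}^1$ — the second because $\bm{f}\mapsto\bm{f}'(1)$ is continuous linear on $\mathrm{Hol}(\Delta,T^*\C^{n+1})\cap\mathcal{C}^{1,\epsilon}(\bar\Delta,T^*\C^{n+1})$ for the $\mathcal{C}^{1,\epsilon}(\partial\Delta)$-topology. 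Under $\tsp\overline{h_\alpha(0)}Ah_\alpha(0)\neq0$, Proposition~\ref{2jet-discs} tells us that at $(r,J_{st},0)$ the differential of $\psi$ in $t$, restricted to the tangent space of $\mathscr{S}^*(Q)$ (equivalently of $\phi(r,J_{st},\cdot)^{-1}\{(0,\dots,1,\dots,0)\}$) at $\bm{h}$, is an isomorphism, and that of $\varphi$ is injective; these being open conditions they persist after shrinking $V$, $\lambda$, $\delta$, and the inverse function theorem yields that $\bm{f}\mapsto f(0)$ is a $\mathcal{C}^1$ diffeomorphism onto its image and $\bm{f}\mapsto\bm{f}'(1)$ is one-to-one. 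Item~$iv)$ comes as Corollary~\ref{propopbis}: the diffeomorphism $\bm{f}\mapsto f(0)$ has inverse depending continuously on $(\rho,J)$, so a fixed neighborhood $O$ of $h(0)$, chosen inside the open set $\Omega$ that contains $h(0)$, lies in $\{f(0)\mid\bm{f}\in\mathscr{S}_{\bm{h},\delta/2,J}^*(\Gamma^\rho)\}$ for all admissible $\rho$, $J$.

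The heavy analysis is already packaged in Theorem~\ref{descriptionpseudoDisques}, so the present proof is essentially soft; the one point demanding care is that here the target bundle $N^*_J\Gamma^\rho$ itself varies with $(\rho,J)$, so in the $\phi$-step one must invoke Proposition~\ref{propco2} to know that it remains a totally real deformation of $\mathscr{N}Q$ with unchanged partial indices, guaranteeing that the submersion rank does not drop. The other bookkeeping item is the $\mathcal{C}^1$-dependence of all these maps on $J$, which rests on the composition lemma above and is precisely the reason the almost complex structures are taken of class $\mathcal{C}^3$.
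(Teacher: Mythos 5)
Your proposal is correct and follows essentially the same route as the paper, whose own proof of Theorem \ref{paramperturb2} consists precisely of the instruction to repeat the arguments of Theorem \ref{paramperturb} and Corollary \ref{propopbis} with the $J$-holomorphy equation adjoined as the second component of the map $\Phi$ from (\ref{map}). Your write-up merely makes explicit the details (the submersion step via Corollary \ref{submersion}, the openness of the rank conditions from Proposition \ref{2jet-discs}, and the role of Proposition \ref{propco2} in keeping the fibration totally real) that the paper leaves implicit.
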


This is basically obtained by following the proof of  Theorem \ref{paramperturb} and  
introducing the $J$-holomorphy equation as a second member of the studied maps as it was done in (\ref{map}).

\subsection{2-jet determination in the almost complex case}
Let $J$ be an almost complex structure of class $\mathcal{C}^{3}$ 
defined on $\R^{2n+2}$, and $\Gamma$ be a real hypersurface of class $\mathcal{C}^4$. Assume $\Gamma$ is $J$-Levi non-degenerate at $p$, and that $F$ is a $(J,J)$-biholomorphism such that $F(\Gamma)\subset\Gamma$. As in the standard case, we use the description of stationary discs given in Theorem \ref{paramperturb2} to get a finite jet determination result. In the following, we only point out the modifications we need in order to adapt the proof given in the standard case.

We first reduce to $p=0$ and $J(0)=J_{st}$. Then, following the proof of Theorem \ref{thmjet} in Section \ref{preuve}, we can assume that
the hypersurface $\Gamma$ is given in a
neighborhood of the origin  by the defining function
\begin{equation*}
\rho(z)=x_0-\tsp \overline{z}_\alpha Az_\alpha+b_0y_0^2+\sum_{j=1}^n(b_jz_j+\bar{b}_j\bar{z}_j)y_0+O(|(y_0,z_\alpha)|^3)
\end{equation*}
which is the normal form (\ref{normalform}). Since this equation is obtained after a {\em holomorphic} change of variable, we still have $J(0)=J_{st}$ and we can write
$$J(z)=\left(\begin{array}{cc}J_{st}^{1}+A(z)&B(z)\\C(z)&J_{st}^{n}+D(z)\end{array}\right)$$
where $J_{st}^{1}$ (resp. $J_{st}^{n}$) denotes the standard structure of $\R^2$ (resp. $\R^{2n}$) and $A$, $B$, $C$, $D$ are of class $\mathcal{C}^3$ and vanish at 0. In particular the $(2\times 2n)$ matrix $B$ has the following form
$$B(z)=\sum_{k=0}^{n}(B_{2k}x_k+B_{2k+1}y_k)+O(\|z\|^2).$$  

Fix some point $z\in\Omega=\{(\gamma\tsp\bar{v}Av,v)\ |\ v\in\C^n,\ \tsp\bar{v}Av\not=0,\ \Re
e(\gamma)>1\}$ and let  $\bm{h}=(h,\zeta h^*)\in\mathscr{S}^*Q$ be the unique holomorphic disc 
satisfying $h(0)=z$. Let $\B \subset \C^{n+1}$ be an open ball centered at the origin,  such that 
$\bm{h}(\overline{\Delta})\subset\B\times\B$. 
By Theorem \ref{paramperturb2}, there exist $\varepsilon>0$, $\lambda>0$ and  $\delta>0$ such that if
$\|{\rho}-r\|_{\mathcal{C}^4(\B)}<\varepsilon$ and $\|{J}-J_{st}\|_{\mathcal{C}^3(\B)}<\lambda$ then the set 
$$\mathscr{S}_{\bm{h},\delta,{J}}^*(\Gamma^{{\rho}})=\{\bm{f}\in\mathscr{S}^*_{{J}}(\Gamma^{{\rho}})\ |
\|\bm{f}-\bm{h}\|_{\mathcal{C}^{1,\epsilon}(\bar \Delta)}<\delta\}$$
is a $(2n+2)$-parameter family parametrized by $\bm{f}\mapsto f(0)$, and the map
$$\begin{array}{rcl}
\mathscr{S}_{\bm{h},\delta,{J}}^*(\Gamma^{{\rho}})&\to&\C^{n+1}\times\C^{n+1}\\
\bm{f}&\mapsto&\bm{f}'(1) 
\end{array}$$
is well-defined and one-to-one.

As in Section \ref{preuve}, we use an inhomogeneous dilation in order to reduce to a sufficiently small perturbation of the model case $\Gamma=Q$, $J=J_{st}$. We underline the fact that ``sufficiently'' is given by Theorem \ref{paramperturb2} and thus depends on the choice of $\bm{h}$ (that is, of our choice of point $z$). Let us use the same notations as in the standard case: 
$\Lambda_t:(z_0,z_\alpha)\mapsto (t^2z_0,tz_\alpha)$ is the inhomogeneous dilation, 
$\Gamma_t:=\Lambda_t^{-1}(\Gamma)$ and $\rho_t:=\frac{1}{t^2}\rho\circ\Lambda_t$ defines the dilated hypersurface, and 
 $F_t:=\Lambda_t^{-1}\circ F\circ\Lambda_t$ is the dilated map. We also set 
$J_t:=\Lambda_t^{-1}\circ  J(\Lambda_t)\circ \Lambda_t$ the dilated almost complex structure:
\begin{equation}\label{Jt}
J_t=\left(\begin{array}{cc}J_{st}^{1}+A\circ\Lambda_t&\frac{1}{t}\,B\circ\Lambda_t\\t\,C\circ\Lambda_t&J_{st}^{n}+D\circ\Lambda_t\end{array}\right).
\end{equation}
Notice that $F_t=\Lambda_t^{-1}\circ F\circ \Lambda_t$ is a $(J_t,J_t)$-biholomorphism.

We already know (see Lemmas \ref{cst-t} and \ref{cst-K}) that there exists some $t_0>0$ such that for any $0<t<t_0$, $\|\rho-r\|_{\mathcal{C}^4(\B)}<\varepsilon$ and ${F_t}_*\bm{f}$ is in $\mathscr{S}_{\bm{h},\delta,{J_t}}^*(\Gamma^{{\rho_t}})$ for any $\bm{f}\in\mathscr{S}_{\bm{h},\delta,{J_t}}^*(\Gamma^{{\rho_t}})$. 
Hence it only remains to check the convergence of the dilated almost complex structures $J_t$ as $t$ tends to $0$. 
 According to (\ref{Jt}), the sequence $(J_t)$ converges with respect to the $\mathcal{C}^3$-topology on any compact subset of $\R^{2n+2}$  
to the model structure $J_{0}$ defined by 
$$J_0(z):=\left(\begin{array}{cc}J_{st}^{1}&b(z_{\alpha})\\0&J_{st}^{n}\end{array}\right),$$
where
$b(z_{\alpha})=\sum_{k=1}^{n}(B_{2k}x_k+B_{2k+1}y_k)$ is the linear part of $B$ in the last $n$ coordinates. It follows that one can choose $t_0>0$ so that for any $0<t<t_0$, the condition
$$\|J_t-J_0\|_{\mathcal{C}^3(\B)}<\lambda$$
is also satisfied.

\bigskip 

{\em Assume $2n>4$.} In case the model structure $J_0$ is sufficiently close to $J_{st}$, 
the same proof than in the standard case gives that $F_t$ is equal to the identity on the open set $O$, and thus $F$ is equal to the identity on some open set. We conclude by using the following result, which is certainly well-known:

\begin{prop}
Let $F$ and $G$ be two pseudoholomorphic maps between almost complex manifolds $(M,J)$ and $(M',J')$. Assume that $(M,J)$ is of class $\mathcal{C}^r$, $r>1$ and $(M,J)$ is of class $\mathcal{C}^{r'}$, $r'>0$. If $M$ is connected by path and $F$ and $G$ coincide on an open set, then they coincide everywhere.
\end{prop}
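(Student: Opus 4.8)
The plan is to run an open--closed argument, reducing the unique continuation to that of pseudoholomorphic discs in the plane. Let $\Omega\subset M$ be the set of points admitting a neighbourhood on which $F=G$; it is open, and non-empty by hypothesis, so it suffices to prove that $\Omega$ is closed. Assume it is not. Since $M$ is path-connected, hence connected, there is a point $p\in\partial\Omega$, and $F(p)=G(p)=:p'$ by continuity. Fix coordinate balls $W\ni p$ in $M$ and $W'\ni p'$ in $M'$ with $F(W)\cup G(W)\subset W'$, identifying $W$ with a ball in $\R^{2n+2}$ centred at $p=0$. I claim that $F=G$ on a neighbourhood of $p$, which contradicts $p\notin\Omega$ and finishes the proof.

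I would use three ingredients. \emph{(i)} If $u\colon\Delta\to W$ is a $J$-holomorphic disc, then $F\circ u$ and $G\circ u$ are $J'$-holomorphic discs with values in $W'$; this is immediate from the chain rule and $dF\circ J=J'(F)\circ dF$. \emph{(ii) (Identity principle for pseudoholomorphic discs.)} Two $J'$-holomorphic discs $\phi_1,\phi_2\colon\Delta\to W'$ that agree on a non-empty open subset of $\Delta$ agree on all of $\Delta$: arguing as usual it suffices to treat the local statement, and there, reading $\phi_1$ and $\phi_2$ in the coordinates of $W'$ and subtracting the two equations \eqref{equa0}, the difference $w=\phi_1-\phi_2$ solves a linear first-order elliptic system in $\Delta$ with bounded coefficients (here one uses that $J'$ is at least of class $\mathcal{C}^1$, so that $\overline{Q_0}(J',\phi_1)-\overline{Q_0}(J',\phi_2)=O(|w|)$), whence $w\equiv0$ as soon as $w$ vanishes on an open set, by the classical weak unique continuation property for such systems. \emph{(iii) (Local filling by pseudoholomorphic discs.)} By the Nijenhuis--Woolf theorem, for $\xi$ near $p$ and $v\in\R^{2n+2}$ small enough there is a $J$-holomorphic disc $u_{\xi,v}\colon\Delta\to W$ with $u_{\xi,v}(0)=\xi$ and $(du_{\xi,v})_0(\partial/\partial x)=v$, taking values in $W$ and depending on $(\xi,v)$ in a $\mathcal{C}^1$ fashion; since $u_{\xi,0}\equiv\xi$ and the linearisation of the disc equation at this constant disc is elliptic with trivial cokernel, the map $v\mapsto u_{\xi,v}(\tfrac12)$ is, near $v=0$, a diffeomorphism onto a neighbourhood of $\xi$ of size uniform in $\xi$. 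Consequently there is $\rho_0>0$ such that any two points of $W$ close enough to $p$ are of the form $u(0)$ and $u(\tfrac12)$ for a common $J$-holomorphic disc $u\colon\Delta\to W$.

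Now I would conclude as follows. Since $p\in\overline\Omega$ I may pick $q\in\Omega$ within $\rho_0$ of $p$, and, $\Omega$ being open, a ball $B(q,\eta)\subset\Omega\cap W$ with $F=G$ on $B(q,\eta)$. Let $\xi\in W$ be within $\rho_0$ of $p$. By \emph{(iii)} there is a $J$-holomorphic disc $u\colon\Delta\to W$ with $u(0)=q$ and $u(\tfrac12)=\xi$; by \emph{(i)} the maps $F\circ u$ and $G\circ u$ are $J'$-holomorphic discs into $W'$, and they coincide on $u^{-1}(B(q,\eta))$, which is an open subset of $\Delta$ containing $0$. By \emph{(ii)} they coincide on all of $\Delta$, hence $F(\xi)=(F\circ u)(\tfrac12)=(G\circ u)(\tfrac12)=G(\xi)$. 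Thus $F=G$ on a full neighbourhood of $p$, the desired contradiction; therefore $\Omega=M$.

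The only genuinely delicate point is \emph{(ii)}, the identity principle for pseudoholomorphic discs: it rests on the classical weak unique continuation theorem for planar first-order elliptic systems, which requires the coefficients of the linearised equation to be bounded and hence the target structure $J'$ to be at least Lipschitz --- automatic under the hypotheses of the paper, where all the almost complex structures are of class $\mathcal{C}^3$. The existence, regularity and smooth dependence of small pseudoholomorphic discs \emph{(iii)}, the composition property \emph{(i)}, and the elementary open--closed bookkeeping are all standard.
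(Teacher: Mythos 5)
Your proposal is correct and follows essentially the same route as the paper: both arguments reduce the statement to the unique continuation property for pseudoholomorphic discs (Sikorav, Prop.\ 3.2.1, which you re-derive via the linearised elliptic system) combined with the Nijenhuis--Woolf-type filling of a neighbourhood by discs through two prescribed points (the paper cites Proposition A1 of Ivashkovich--Rosay for exactly this). The only cosmetic difference is the globalisation step --- you run an open--closed argument while the paper chains coordinate balls along a path --- which does not change the substance.
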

\begin{proof}
Assume that $F$ and $G$ coincide on some neighborhood of some point $a$. Pick $b$ and some continuous path between $a$ and $b$. According to Proposition A1 in \cite{IR}, for any $z$ on this path, there exists some $R(z)>0$ such that for any points $\alpha,\beta\in\B(z,R(z))$, one can construct a $J$-holomorphic disc $u$ satisfying $u(0)=\alpha$, $u(\frac{1}{2})=\beta$. We take a finite covering of the path by such balls $\B(z_0,R(z_0)),\hdots,\B(z_k,R(z_k))$, with $z_0=a$ and $z_k=b$. We can assume that two successive balls intersect, and that $F$ and $G$ coincide on $\B(z_0,R(z_0))$.

Pick $\alpha_0\in\B(z_0,R(z_0))\cap\B(z_1,R(z_1))$. For any $\beta\in\B(z_1,R(z_1))$, there exists 
a $J$-holomorphic disc $u$ passing trough $\alpha_0$ and $\beta$. The discs $F\circ u$ and $G\circ u$ coincide on the nonempty open set $u^{-1}(\B(z_0,R(z_0))\cap\B(z_1,R(z_1)))$.
It then follows from the unique continuation property of pseudoholomorphic discs 
(see Proposition 3.2.1 in \cite{Sikorav}) that $F\circ u$ coincide with $G\circ u$ on $\Delta$. Thus $F(\beta)=G(\beta)$ and so $F$ and $G$ coincide on $\B(z_1,R(z_1))$. Gradually, this gives that
 $F$ and $G$ coincide on $\B(z_k,R(z_k))$ and so $F(b)=G(b)$.
\end{proof}

Therefore we obtain:
\begin{prop}
Let $J$ and $J'$ be two almost complex structures  of class $\mathcal{C}^{3}$ 
defined on $\R^{2n+2}$. Let $\Gamma$, $\Gamma'$ be two real hypersurfaces of class $\mathcal{C}^4$. 
Assume $\Gamma$ is $J$-Levi non-degenerate at $p$, and that $J$ is sufficiently close to the standard structure $J_{st}$ with respect to $\mathcal{C}^1$-topology. Then the germs at $p$ of $(J,J')$-biholomorphisms $F$ such that $F(\Gamma)=\Gamma'$ are uniquely determined by their 2-jet at $p$.
\end{prop}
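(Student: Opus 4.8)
The plan is to carry out the argument proving Theorem~\ref{thmjet} almost verbatim, treating the almost complex structure as an additional deformation parameter (as in the map $\Phi$ of (\ref{map})) and invoking Theorem~\ref{paramperturb2} in place of Theorem~\ref{paramperturb}. First I would reduce to a self-map: if $G$ and $H$ are two $(J,J')$-biholomorphisms with $G(\Gamma)=H(\Gamma)=\Gamma'$ and the same $2$-jet at $p$, then $F:=H^{-1}\circ G$ is a $(J,J)$-biholomorphism with $F(\Gamma)=\Gamma$ which is the identity up to order $2$ at $p$, so it suffices to show $F=\mathrm{id}$ near $p$. After a translation and a holomorphic change of coordinates I may assume $p=0$, $J(0)=J_{st}$, and $\Gamma$ in the normal form (\ref{normalform}); writing $J$ in the block form used above, its off-diagonal block $B$ vanishes at $0$ with linear part $b(z_\alpha)=\sum_{k=1}^n(B_{2k}x_k+B_{2k+1}y_k)$.

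Next I would fix $z\in\Omega$, take the unique $\bm{h}=(h,\zeta h^*)\in\mathscr{S}^*(Q)$ with $h(0)=z$, choose a ball $\B$ with $\bm{h}(\bar\Delta)\subset\B\times\B$, and let $\varepsilon,\lambda,\delta>0$ be the constants given by Theorem~\ref{paramperturb2}. The engine is again the anisotropic dilation $\Lambda_t:(z_0,z_\alpha)\mapsto(t^2z_0,tz_\alpha)$: as in Section~\ref{preuve}, $\Gamma_t:=\Lambda_t^{-1}(\Gamma)=\Gamma^{\rho_t}$ with $\|\rho_t-r\|_{\mathcal{C}^4(\B)}\to 0$; $F_t:=\Lambda_t^{-1}\circ F\circ\Lambda_t$ is still the identity to order $2$ and satisfies the contraction estimate of Lemma~\ref{cst-K}, whose proof only concerns how $F$ rescales and is blind to the structure; and $J_t:=\Lambda_t^{-1}\circ J(\Lambda_t)\circ\Lambda_t$ has the block form (\ref{Jt}). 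As $t\to 0$ the factor $t^{-1}$ in front of $B\circ\Lambda_t$ annihilates all but the linear part of $B$, so $J_t\to J_0$ in $\mathcal{C}^3$ on $\bar\B$, where $J_0$ is the model structure with off-diagonal block $b(z_\alpha)$.

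The hypothesis enters at this point: $\|J_0-J_{st}\|_{\mathcal{C}^3(\B)}$ is controlled by $\max_k(|B_{2k}|,|B_{2k+1}|)$, i.e.\ by $\|dB_0\|$, hence by $\|J-J_{st}\|_{\mathcal{C}^1}$. So when $J$ is sufficiently $\mathcal{C}^1$-close to $J_{st}$ we get $\|J_0-J_{st}\|_{\mathcal{C}^3(\B)}<\lambda/2$, and choosing $t_0$ small yields $\|\rho_t-r\|_{\mathcal{C}^4(\B)}<\varepsilon$ and $\|J_t-J_{st}\|_{\mathcal{C}^3(\B)}\le\|J_t-J_0\|_{\mathcal{C}^3(\B)}+\|J_0-J_{st}\|_{\mathcal{C}^3(\B)}<\lambda$ for all $0<t<t_0$; shrinking $t_0$ once more via Lemma~\ref{cst-K} we may also assume $t_0<\delta/(2K(M+1)^3)$ with $M=\|\bm{h}\|_{\mathcal{C}^{1,\epsilon}(\partial\Delta)}$. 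Fix such a $t$. Then Theorem~\ref{paramperturb2} applies to $(\Gamma^{\rho_t},J_t)$: $\mathscr{S}_{\bm{h},\delta,J_t}^*(\Gamma^{\rho_t})$ is a $(2n+2)$-parameter family parametrized by $\bm{f}\mapsto f(0)$, its set of centers contains a fixed neighborhood $O$ of $z$, and $\bm{f}\mapsto\bm{f}'(1)$ is one-to-one on it. For $w\in O$ take the unique $\bm{f}$ with $f(0)=w$; since $F_t$ is a $(J_t,J_t)$-biholomorphism, its Sato lift to $T^*\R^{2n+2}$ is $\bm{J_t}$-biholomorphic (the invariance stated after (\ref{defreleve})), so ${F_t}_*\bm{f}\in\mathscr{S}^*_{J_t}(\Gamma^{\rho_t})$, and the contraction estimate together with the choice of $t_0$ keeps it in the $\delta$-ball; as $F_t$ is the identity to order $2$, ${F_t}_*\bm{f}$ and $\bm{f}$ have the same $1$-jet at $1$, hence coincide. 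Thus $F_t\circ f=f$ and $F_t(w)=w$ for every $w\in O$, so $F_t=\mathrm{id}$ on the open set $O$; being pseudoholomorphic, $F_t$ is then the identity everywhere by the unique continuation property for pseudoholomorphic maps established just above, and therefore $F=\mathrm{id}$ near $p$.

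The main obstacle is the third step: identifying precisely which part of $J$ survives the anisotropic rescaling, verifying that the surviving limit $J_0$ is controlled by the $\mathcal{C}^1$-size of $J-J_{st}$ alone, and checking that this is enough to land $J_t$ (for small $t$) inside the neighborhood on which the Globevnik-type parametrization of Theorems~\ref{descriptionpseudoDisques} and~\ref{paramperturb2} is available. A secondary, routine point is that neither the rescaling estimates of Lemma~\ref{cst-K} nor the invariance of $\mathscr{S}^*$ under ${F_t}_*$ sees the ambient structure: the former only uses the dilation behaviour of $F$, the latter only the functoriality of Sato's cotangent lift.
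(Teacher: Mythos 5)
Your proof follows the paper's own argument essentially verbatim: reduction to a self-map that is the identity to order two, normalization to $p=0$, $J(0)=J_{st}$ and normal form, anisotropic dilation of $\rho$, $F$ and $J$, application of Theorem~\ref{paramperturb2} to the dilated data via the estimates of Lemmas~\ref{cst-t} and~\ref{cst-K}, and unique continuation for pseudoholomorphic maps. The one point you make more explicit than the paper --- that the limit structure $J_0$ is determined by the linear part of the off-diagonal block $B$, hence controlled by $\|J-J_{st}\|_{\mathcal{C}^1}$, which is exactly where the $\mathcal{C}^1$-closeness hypothesis enters --- is correct and consistent with the paper's terser treatment.
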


Of course, this result only holds under the assumption that the model structure $J_0$ is close enough to $ J_{st}$. 
In the general case, we would  obtain a small perturbation of the hyperquadric $Q$ equipped with a model structure $J_0$ (see \cite{GS} for the study of these special almost complex structures). So our method would need to determine the $J_0$-stationary discs glued to $Q$, and to compute the partial indices in this case. 

\bigskip

{\em Assume $2n=4$}. The situation is rather different and we do not need to consider only perturbations of the 
standard structure. We use the following
\begin{lem} 
If $n=2$,  we can always assume that $J_0=J_{st}$.
\end{lem}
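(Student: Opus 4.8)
The plan is to eliminate the harmless part of the model structure $J_0$ by a further change of coordinates adapted to the low-dimensional setting. Recall that after our normalizations we have reduced to the model structure
$$J_0(z)=\begin{pmatrix}J_{st}^{1}&b(z_\alpha)\\0&J_{st}^{n}\end{pmatrix},$$
with $b(z_\alpha)=\sum_{k=1}^{n}(B_{2k}x_k+B_{2k+1}y_k)$ linear in $z_\alpha=(z_1,\dots,z_n)$. The point is that when $n=2$, the target $\R^{2}$ of the off-diagonal block is a complex line, so the $J_{st}^{1}$-antilinear part of $b$ can be absorbed by a quadratic holomorphic change of the $z_0$-variable, exactly in the spirit of the weight-$2$ normalization (\ref{normalform}) used earlier; while the $J_{st}^{1}$-linear part is forced to vanish by the integrability of $J$ together with the condition $J_0(0)=J_{st}$. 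So the first step is to write $b(z_\alpha)$, viewed as an $\R$-linear map $\R^{2n}\to\C$, in terms of its $\C$-linear part $b_1(z_\alpha)$ (a $\C$-linear form in $z_1,\dots,z_n$) and its $\C$-antilinear part $b_2(z_\alpha)$ (a $\C$-linear form in $\bar z_1,\dots,\bar z_n$), using the identification of the first block with multiplication by $i$.

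Next I would use the integrability constraint. Since $J$ is a genuine almost complex structure arising (after a holomorphic change of variables) from the original problem, the Nijenhuis tensor of $J_0$ must agree, to the relevant order, with that dictated by the construction; more concretely, $J_0$ is the linear part at $0$ of an integrable-type model and the vanishing of the appropriate first-order obstruction forces the $\C$-linear component $b_1$ to be zero. I expect this to be a short computation: writing the $(0,1)$-vector fields for $J_0$ and imposing that their brackets are again of type $(0,1)$ modulo higher order yields $b_1\equiv 0$, leaving $J_0$ with only the antilinear block $b_2$. Once $b_1=0$, perform the biholomorphic change of coordinates $z_0'=z_0+\sigma(z_\alpha)$, where $\sigma$ is the holomorphic quadratic form in $z_\alpha$ chosen so that $\partial\sigma$ cancels $b_2$: because $b_2$ is $\C$-linear in $\bar z_\alpha$ and we are in dimension $n=2$, the equation $d\sigma = (\text{expression built from }b_2)$ is solvable (this is where $2n=4$ is used — the relevant cohomological obstruction, which is a symmetry condition on the coefficients of $b_2$, is automatically satisfied when the source has complex dimension $2$). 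After this change one checks directly that the off-diagonal block has been killed, i.e. the new model structure is $J_{st}$.

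Finally I would verify that this last change of coordinates is compatible with everything already set up: it is holomorphic (for $J_{st}$), it fixes $0$ and has identity differential there, hence it does not disturb the normal form (\ref{normalform}) of $\Gamma$ nor the condition $J(0)=J_{st}$, and — being a biholomorphism — it conjugates the pair $(F,\Gamma)$ to a new pair with the same $2$-jet hypothesis. Thus, after this reduction, the dilated structures $J_t$ converge to $J_{st}$ itself, and the argument of the case $2n>4$ (with $J_0$ close to $J_{st}$) applies verbatim. The main obstacle I anticipate is the bookkeeping in the integrability step: correctly identifying which components of $b$ are obstructed and which are removable, and making the quadratic change of variables explicit enough to see the cancellation — but this is a finite linear-algebra computation with no analytic difficulty, and the dimension restriction $n=2$ is precisely what makes the removable part exhaust all of $b$ once $b_1$ is shown to vanish.
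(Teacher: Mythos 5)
Your proposal has two genuine gaps, and it misses the actual dimension-four ingredient that the paper's proof relies on. First, the appeal to integrability to kill the $\C$-linear part $b_1$ is unfounded: $J$ is an arbitrary almost complex structure of class $\mathcal{C}^3$, with no integrability hypothesis, so there is no Nijenhuis-type constraint available (the only algebraic constraint inherited by the limit structure is $J_0^2=-I$, which concerns the action of $b(z_\alpha)$ on tangent vectors, not its dependence on the base point $z_\alpha$, which is what your $b_1/b_2$ splitting refers to). Second, and more seriously, the proposed cancellation mechanism does not exist: if $\sigma$ is a holomorphic quadratic form and $\Phi(z_0,z_\alpha)=(z_0+\sigma(z_\alpha),z_\alpha)$, then $d\Phi=\left(\begin{smallmatrix}I&L\\0&I\end{smallmatrix}\right)$ with $L=d\sigma_{z_\alpha}$ $\C$-linear, and conjugating $J=\left(\begin{smallmatrix}J_{st}^{1}+A&B\\C&J_{st}^{n}+D\end{smallmatrix}\right)$ by $d\Phi$ changes the upper-right block by $LJ_{st}^{n}-J_{st}^{1}L+LD-AL-LCL=LD-AL-LCL$, since $L$ commutes with the standard structures. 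As $A,C,D$ vanish at $0$, this correction is $O(|z_\alpha|\,\|z\|)$, i.e.\ of weighted order at least $2$: a $J_{st}$-holomorphic change of the $z_0$-variable leaves the weight-one block $b(z_\alpha)$ strictly unchanged and cannot cancel $b_2$ (nor $b_1$). A symptom of the trouble is that nothing in your argument actually uses $n=2$, whereas the paper is careful to prove the unrestricted statement only in that case.

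The paper's proof runs in the opposite order and uses a genuinely four-dimensional fact: in $\R^4$ one can choose coordinates centered at the origin in which $J$ is block-\emph{diagonal}, $J=\left(\begin{smallmatrix}J_{st}^{1}+A&0\\0&J_{st}^{1}+D\end{smallmatrix}\right)$ with $A,D$ vanishing at $0$ (this comes from straightening two transverse foliations by $J$-holomorphic curves, which exist in real dimension $4$). Only afterwards is $\Gamma$ put in normal form by $z_0=z_0'-cz_1^2$; by the computation above this reintroduces an off-diagonal block, but one of size $O(|z_1|\,\|z\|)$, hence of weighted order $\ge 2$, so that $\frac{1}{t}B\circ\Lambda_t=O(t)\to 0$ and $J_0=J_{st}$. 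In other words, the correct use of the quadratic change of variables is not to cancel a nonzero $b(z_\alpha)$ (which is impossible), but to observe that, starting from $B\equiv 0$, the normalization of $\Gamma$ only creates off-diagonal terms that the anisotropic dilations wash out. You would need to replace your two cancellation steps by this diagonalization step to obtain a correct proof.
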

\begin{proof}
In $\R^4$ one can choose coordinates centered at the origin such that the structure $J$ has the form $J=\left(\begin{array}{cc}J_{st}^{1}+A&0\\0&J_{st}^{1}+D\end{array}\right)$, where $A$ and $D$ are two $(2\times 2)$ matrices vanishing at 0, and where the equation of $\Gamma$ is given by  
\begin{equation*}
x_0+\Re e (cz_1^2)-a|z_1|^2+b_0y_0^2+(b_1z_1+\bar{b}_1\bar{z}_1)y_0+O(|(y_0,z_1)|^3)=0.
\end{equation*}

After the local change of coordinates $z_0=z_0'-cz_1^2,\
z_1=z_1'$, the hypersurface $\Gamma$ is given by the defining function
\begin{equation*}
\rho(z)=x_0-a|z_1|^2+b_0y_0^2+(b_1z_1+\bar{b}_1\bar{z}_1)y_0+O(|(y_0,z_1)|^3)
\end{equation*}
and the structure, still denoted by $J$, has the form  $J=\left(\begin{array}{cc}J_{st}^{1}+A&B\\0&J_{st}^{1}+D\end{array}\right)$, where 
$B(z)=O(|z_1|\|z\|)$. We thus obtain that $J_t=\Lambda_t^{-1}\circ  J(\Lambda_t)\circ \Lambda_t$ converges to the standard structure $J_{st}$.
\end{proof}

Hence this proves Theorem \ref{theopC} and we get a finite jet determination result in a four dimensional almost complex manifold. 

\subsection{Boundary version of the uniqueness Theorem} 
In the almost complex setting, the following analogue of H. Cartan's uniqueness Theorem is due to \cite{Lee}:
let $F:M\to M$ be a pseudoholomorphic map, where $(M,J)$ is a smooth almost complex manifold assumed to be connected and Kobayashi hyperbolic; if $F(p)=p$ and $dF_p=\mathrm{Id}$, then $F$ is the identity mapping.

The proof of Theorem \ref{theopC} actually gives a boundary version of this result. As in the standard case, we only used the fact that the pseudoholomorphic map $(F,dF^{-1})$, even defined locally only on one side of $\Gamma$, extends $\mathcal{C}^1$-smoothly to $\Gamma$. This is the case if $F$ is a pseudo-biholomorphism, or more generally a proper pseudoholomorphic map, between two bounded strictly pseudoconvex regions with $\mathcal{C}^4$ boundary. So we obtain: 

\begin{cor}
Let $J$ and $J'$ be two almost complex structures of class $\mathcal{C}^{3}$ 
defined in $\R^{4}$. 
Let $\Omega$ (resp. $\Omega'$) be a bounded strictly $J$-convex region (resp. a bounded strictly $J'$-convex region) in  $\R^{4}$. Assume the boundaries of these domains are of class $\mathcal{C}^4$ and let $p\in\partial\Omega$. If $F_1$ and $F_2$ are two proper $(J,J')$-holomorphic maps from $\Omega$ to $\Omega'$ with the same 2-jet at $p$, they coincide.
\end{cor}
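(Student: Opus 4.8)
The plan is to deduce the Corollary from the proof of Theorem \ref{theopC}, the only genuinely new point being the passage from a germ of pseudo-biholomorphism defined on a full neighbourhood of $p$ to one defined on a single side of $\Gamma:=\partial\Omega$. So the first step would be to use the boundary behaviour of proper pseudoholomorphic maps between strictly pseudoconvex domains. Since $\Omega$ and $\Omega'$ are bounded and strictly $J$- (resp. $J'$-) convex with $\mathcal{C}^4$ boundary, and $J,J'$ are of class $\mathcal{C}^3$, each proper $(J,J')$-holomorphic map $F_\ell:\Omega\to\Omega'$ $(\ell=1,2)$ extends to a $\mathcal{C}^3$-smooth map on a neighbourhood of $p$ in $\overline{\Omega}$, has invertible differential at every boundary point near $p$, and maps the side $\{\rho<0\}$ of $\Gamma$ near $p$ pseudo-biholomorphically onto the side $\{\rho'<0\}$ of $\Gamma':=\partial\Omega'$ while sending $\Gamma$ into $\Gamma'$; this is the almost complex, finite-smoothness analogue of the Fefferman and Pinchuk boundary theorems, obtained through the reflection and scaling techniques for pseudoholomorphic maps. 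As $F_1$ and $F_2$ have the same $2$-jet at $p$ we have $p':=F_1(p)=F_2(p)$, $dF_{1,p}=dF_{2,p}$, and hence (since locally $F_2^{-1}$ is a $(J',J)$-holomorphic inverse near $p'$) the germ $F:=F_2^{-1}\circ F_1$ is a well-defined $(J,J)$-biholomorphism from a one-sided neighbourhood $\{\rho<0\}\cap U$ of $p$ onto another such neighbourhood, of class $\mathcal{C}^1$ up to $\Gamma$, with $F(\Gamma)=\Gamma$ near $p$ and $j_p^2F=\mathrm{id}$ (the $2$-jet of $F_2^{-1}$ at $p'$ being determined by that of $F_2$ at $p$ and the invertibility of $dF_{2,p}$).

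Next I would run the proof of Theorem \ref{theopC} for this $F$. We work in $\R^4$ and $\Gamma$ is $J$-Levi non-degenerate at $p$, since strict $J$-convexity makes the Levi form positive definite on $T_p^J\Gamma$; so after the holomorphic normalization of $\Gamma$ and the inhomogeneous dilation $\Lambda_t$ of that proof, and using the lemma allowing one to take $J_0=J_{st}$ in this dimension, we are reduced to a $\mathcal{C}^4$-small perturbation of a non-degenerate hyperquadric carrying an almost complex structure $\mathcal{C}^3$-close to $J_{st}$, to which Theorem \ref{paramperturb2} applies. The only ingredient that needs the one-sided context is the invariance of the family $\mathscr{S}^*_J(\Gamma)$ under $F$, i.e. the correspondence sending $h(0)$ to $F\circ h(0)$ and $\bm{h}'(1)$ to $(F_*\bm{h})'(1)$: this remains valid because $\Gamma$ is strictly $J$-pseudoconvex, so $-\rho\circ f$ is subharmonic for every $J$-stationary disc $f$ glued to $\Gamma$ and therefore $f(\overline{\Delta})\subset\overline{\{\rho<0\}}$ (the exact analogue of Remark \ref{cote}), while $(F,\tsp dF^{-1})$ is $\mathcal{C}^1$ up to $\Gamma$ by the previous step. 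Then Lemmas \ref{cst-t} and \ref{cst-K}, together with items $ii)$ and $iii)$ of Theorem \ref{paramperturb2} and exactly the argument used in the proof of Theorem \ref{thmjet}, give that $F$ is the identity on a one-sided neighbourhood of $p$; equivalently $F_1=F_2$ on a nonempty open subset of $\Omega$.

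Since $\Omega$ is connected and $F_1,F_2$ are $(J,J')$-holomorphic and agree on an open subset, the unique continuation property of pseudoholomorphic maps (the Proposition established above, which rests on Proposition 3.2.1 in \cite{Sikorav}) forces $F_1=F_2$ on all of $\Omega$, which is the claim. I expect the main obstacle to lie entirely in the first step: one must invoke, in the almost complex and merely $\mathcal{C}^4$-boundary setting, that a proper $(J,J')$-holomorphic map between bounded strictly pseudoconvex domains extends $\mathcal{C}^1$-smoothly (in fact $\mathcal{C}^3$-smoothly) to the boundary, with invertible boundary differential, and is a local pseudo-biholomorphism there — the finite-smoothness, non-integrable refinement of the Fefferman--Pinchuk phenomenon. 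Once this is granted the remainder is a transcription of the proof of Theorem \ref{theopC}, and the only computation worth spelling out is the subharmonicity estimate behind the almost complex analogue of Remark \ref{cote}.
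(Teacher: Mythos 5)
Your proposal is correct and follows essentially the same route as the paper: the paper disposes of this corollary by observing that the proof of Theorem \ref{theopC} only uses that $(F,dF^{-1})$ extends $\mathcal{C}^1$-smoothly to $\Gamma$ from one side (the almost complex analogue of Remark \ref{cote}, with the discs confined to one side of the strictly pseudoconvex boundary by subharmonicity), that this holds for proper pseudoholomorphic maps between bounded strictly pseudoconvex regions with $\mathcal{C}^4$ boundary, and then concludes by unique continuation. You have correctly identified the one genuinely external ingredient — the Fefferman-type boundary extension in the almost complex, finite-smoothness setting (cf.\ \cite{CGS2}) — which the paper likewise invokes without detailed proof.
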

\noindent We recall that  a strictly $J$-convex region $\Omega$  of class $\mathcal{C}^4$  is a domain admitting a {\em global} 
defining function of class $\mathcal{C}^4$ whose Levi form is positive definite in a neighborhood of $\overline{\Omega}$.

\vskip 0,5cm
{\small
\noindent Florian Bertrand\\
Department of Mathematics, University of Vienna, Nordbergstrasse 15, Vienna, 1090, Austria\\
{\sl E-mail address}: florian.bertrand@univie.ac.at\\
\\
L\'ea Blanc-Centi \\
Laboratoire Paul Painlev\'e, Universit\'e Lille 1, 59655 Villeneuve d'Ascq C\'edex, France\\
{\sl E-mail address}: lea.blanc-centi@math.univ-lille1.fr\\
}

\end{document}